\newtheorem{theorem}{Theorem}[section]
\newtheorem{corollary}[theorem]{Corollary}
\newtheorem{lemma}[theorem]{Lemma}
\newtheorem{proposition}[theorem]{Proposition}
\theoremstyle{definition}
\newtheorem{definition}[theorem]{Definition}
\newtheorem{remark}[theorem]{Remark}
\newtheorem{theoremlit}[theorem]{Theorem}
\newcommand \Cm { \mathbb{C}}
\newcommand \Dm { \mathbb{D}}
\newcommand \Hm { \mathbb{H}}
\newcommand \Rm { \mathbb{R}}
\newcommand \Nm { \mathbb{N}}
\newcommand \Sm { \mathbb{S}}
\newcommand \Zm { \mathbb{Z}}
\renewcommand{\d}{\mathrm{d}}
\let \Re \undefined
\DeclareMathOperator{\Re}{Re}
\let \Im \undefined
\DeclareMathOperator{\Im}{Im}
\renewcommand \L {{\cal L}}
\newcommand \D {{\cal D}}
\newcommand \G {{\cal G}}
\newcommand \Gh {{\cal G}_{\mathrm h}}
\newcommand \Ghbar {\overline{{\cal G}_{\mathrm h}}}
\renewcommand \H {{\cal H}}
\renewcommand \S {{\cal S}}
\newcommand \T {{\cal T}}
\newcommand \x { {\mathrm x}}
\newcommand \rmv { {\mathrm v}}
\newcommand \wtH {\widetilde{H}}
\newcommand \Psihf {\Psi_\mathrm{hf}}
\newcommand \bPsifh {\bar{\Psi}_{\mathrm{hf}}^{-1}}
\newcommand \bPsihf {\bar{\Psi}_{\mathrm{hf}}}
\newcommand \Cev {C_{\mathrm{ev}}^\infty}
\newcommand \pih {\pi_\mathrm{h}}
\newcommand \gammav {\gamma^{{\mathrm v}}}
\newcommand \tx {\tilde{x}}
\newcommand \Cal {C_{\alpha}^\infty}
\newcommand \Calp {C_{\alpha,+}^\infty}
\newcommand \Calm {C_{\alpha,-}^\infty}
\newcommand \add[1] {#1}
\newcommand \muh {\mu_{\mathrm h}}
\title{The hyperbolic X-ray transform: new range characterizations, mapping properties and functional relations}%
\author{Nikolas Eptaminitakis\thanks{Institut f\"{u}r Differentialgeometrie, Leibniz Universit\"{a}t Hannover, Welfengarten 1, 30167 Hannover, Germany;\newline email: nikolaos.eptaminitakis@math.uni-hannover.de} \and Fran\c{c}ois Monard\thanks{Department of Mathematics, University of California, Santa Cruz CA 95064; email:fmonard@ucsc.edu} \and Yuzhou Zou\thanks{Department of Mathematics, Northwestern University, Evanston IL 60208; \newline email: yuzhou.zou@northwestern.edu}}
\date{}
\begin{document}
\maketitle

\begin{abstract}
We derive new singular value decompositions and range characterizations for the X-ray transform on the Poincar\'e disk, intertwining relations with distinguished differential operators of wedge type, and a surjectivity result for the backprojection operator. New functional settings are found which allow to sharply understand boundary behavior issues and invertibility settings. The approach mainly exploits analogous results obtained only recently in the Euclidean disk, together with the projective equivalence between the two models.
\end{abstract}


\section{Introduction} \label{sec:intro}

Consider the unit disk $\Dm_H = \{z\in \Cm, |z|\le 1\}$, with interior $\Dm_H^\circ$ equipped with the Poincar\'e metric
\begin{equation}\label{eq:Poincare_metric}
	g_H = c^{-2}(z)|\d z|^2 ,\qquad c(z) \coloneqq \frac{1-|z|^2}{2}, \index{$g_H$}\index{c}
\end{equation}
of constant curvature $-1$. The space of oriented geodesics through $(\Dm_H^\circ, g_H)$ can be modeled as the open manifold $\Gh = \add{(\Rm/2\pi\Zm)}_\beta\times \Rm_a$ arising in the ``horocyclic'' parameterization $\gamma_{\beta,a}(t)$ of unit-speed hyperbolic geodesics given in \eqref{eq:hyp_fan_beam}. Our main object of study is the {\em hyperbolic X-ray transform}
\begin{equation}
	I_0^H f (\beta,a) \coloneqq \int_\Rm f(\gamma_{\beta,a}(t))\ \d t, \qquad (\beta,a)\in \Gh,
    \label{eq:hypXrt}\index{$I_0$}
\end{equation}
initially defined for $f\in C_c^\infty(\Dm_H^\circ)$ and with values in $C_c^\infty (\Gh)$.

Such a transform is a classical object of study \cite{Helgason1990,Kurusa1991,Berenstein1991,Ishikawa1997,Lissianoi1997,Berenstein1999} that continues to receive recent attention, see \cite{Guillarmou2017,Richardson2025}; in particular, it is long known that the transform above is injective over integrands with compact support or in the Schwartz class, that its range over such integrands can be described in terms of moment conditions \cite{Berenstein1993a}, and a variety of global and semi-local inversion formulas can be derived \cite{Bal2005,Courdurier2013}. This transform is of interest due to its applications in imaging in media with variable refractive index, for its connections with the inverse conductivity problem \cite{Berenstein1994,Berenstein1996,Fridman1998}, and for geophysical data processing \cite{Chen2021}.

More recently, integral-geometric questions generalizing the study of \eqref{eq:hypXrt} have been considered on {\em asymptotically hyperbolic} geometries (geometric models generalizing $(\Dm_H^\circ, g_H)$ to non-constant curvature, see e.g. Section \ref{sec:cosphere}). This includes questions of injectivity of the associated X-ray transform and its connection with rigidity questions \cite{Graham2019,Eptaminitakis2021}, stability estimates \cite{Eptaminitakis2022}, and the study of associated non-abelian X-ray transforms \cite{Grebnev2023}.

Formulating stability statements as in \cite{Eptaminitakis2022} begs the question of finding the appropriate Sobolev scales that sharply capture the mapping properties of an X-ray transform, or further yet, finding explicit functional settings, Fr\'echet or Sobolev, where a ``normal'' operator (i.e. the X-ray transform composed with its adjoint) can be made {\em invertible}. Such issues have only been looked at very recently, starting with the works \cite{Monard2017,Monard2019a}, as they lead, among other applications, to fulfilling crucial criteria in novel toolboxes in statistical inverse problems; see also \cite{Nickl2023}. In geometries without conjugate points, where the normal operator is a pseudo-differential operator in the interior, answering these questions requires to turn one's focus to the boundary geometry and identify the pseudo-differential calculus (factoring in boundary behavior) where the normal operator is invertible. Such constructions depend on the boundary model, whether incomplete convex, asymptotically hyperbolic, or asymptotically Euclidean/conic, etc. The first case is currently the most documented (see the recent topical review \cite{Monard2023b}), while the understanding of the other ones is still in its beginnings. This work represents a first step in this direction in the case of asymptotically hyperbolic geometries. Focusing on the Poincar\'e disk model as a first step is of relevance as it has the further desireable property that special normal operators (given below) can be found to be in the functional calculus of some distinguished {\em differential operators of wedge type} \add{(see Remark \ref{rmk:wedge})}, and this further informs which pseudo-differential algebra and Sobolev spaces to use.

One of the salient features of this article is that, on the topic of mapping properties of the hyperbolic X-ray transform, one can find invertibility settings and sharp functional settings to obtain tame and compact mapping properties, by leveraging (i) recent results established in the case of the Euclidean disk (an incomplete model), see \cite{Mishra2019,Mishra2022,Monard2019a}, and (ii) the projective equivalence between the Euclidean disk and the Poincar\'e disk (see Proposition \ref{prop:projEquiv} below). Such a projective equivalence is of course not new, and has been exploited in prior work, e.g. in \cite{Berenstein1993a}, to derive a range characterization via moment conditions, or in \cite{Palamodov2000a} to study partial data problems for a transform related to \eqref{eq:hypXrt}. Here, we exploit this equivalence further, notably paying close attention to the change in boundary behavior incurred through this projective equivalence which bridges a prototype of an {\em incomplete, convex} geometry with a prototype of a {\em complete, asymptotically hyperbolic} one. On the hyperbolic side, this requires compactifying $\Dm_H^\circ$ (in the obvious way) and $\Gh$ (in a manner dictated by the projective equivalence), which then allows to discuss function spaces with non-trivial behavior at infinity, where the hyperbolic X-ray transform enjoys sharp mapping properties, compactness, and good invertibility. A summary of the main results is as follows:

$\bullet$ In Theorem \ref{thm:SVDH}, we give several singular value decompositions (SVDs) for appropriately weighted versions of \eqref{eq:hypXrt}, in particular contexts where the associated normal operator is {\em compact}, a desirable feature for several applied purposes.

$\bullet$ Theorem \ref{thm:isomorphismH} provides a functional framework (Sobolev spaces on $\Dm_H$ and on the data space $\Gh$ that account for boundary behavior) where one can formulate sharp and global stability estimates for the transform \eqref{eq:hypXrt}.

$\bullet$ Corollary \ref{cor:adj_onto} gives a surjectivity result for the backprojection operator (defined in \eqref{eq:backproj} below); such a result is the hyperbolic analogue of \cite[Theorem 1.4]{Pestov2005}. The latter result, in the context of simple surfaces, is a crucial building block to several subsequent results in the analysis of X-ray and attenuated X-ray transforms, see \cite{Paternain2023}.

$\bullet$ Theorem \ref{thm:moments} gives a generalization of the range characterization of $I_0^H$ in terms of {\em hyperbolic moment conditions} (initially formulated in \cite{Berenstein1993a} for functions of Schwartz class), to functions with non-trivial boundary behavior and finite Sobolev regularity.

$\bullet$ Proposition \ref{prop:interH} and Theorem \ref{thm:FuncRelH} give new functional relations with distinguished differential operators on $\Dm_H$  and $\Gh$, where the differential operators $\L_{\gamma}^H$ on $\Dm_H$ (see \eqref{eq:L_wedge}) and $\T_\gamma^H$ on $\Gh$ (see \eqref{eq:TgH}) when $\Gh$ is suitably compactified, are of {\em wedge} type (see e.g.,  \cite{Schulze1991}). This is in contrast with prior work on the topic, where relations between \eqref{eq:hypXrt} and the hyperbolic Laplacian have been found (see e.g., \cite{Wallace1986,Guillarmou2017}), and where prior studies of the X-ray transform on asymptotic hyperbolic spaces make systematic use of the uniformly degenerate (or 0-) calculus of Mazzeo-Melrose (\cite{Mazzeo1987}), the natural calculus where the hyperbolic Laplacian belongs.

$\bullet$ For one specific weighted X-ray transform (the case $\gamma=0$ below), the functional relations just discussed simplify, and the infinite-dimensional co-kernel of the X-ray transform can be identified with the help of boundary operators $P_-^H$, $C_-^H$, analogous to those defined on simple surfaces in \cite{Pestov2004,Monard2015a,Mishra2019}. This leads to the refined range characterization given in Theorem \ref{thm:gammazeroHalt}, some of which required a refinement (to Sobolev scales) of the range characterization of the X-ray transform \cite[Thm 4.4.(ii)]{Pestov2004} in the case of the Euclidean disk (see Sec.\ \ref{sec:gammazero}).

The current results open the way for further studies, which the authors plan to address in future work, including the study of analogous questions for tensor fields (where the study of ``tensor fields with boundary behavior'', viewed as functions on the sphere bundle $S\Dm_H^\circ$, see \eqref{eq:SDH}, requires discussing an appropriate compactification of the latter), and an in-depth study of the link between the X-ray transform on asymptotically hyperbolic manifolds and wedge operators.

We now discuss the main results in the next section and give an outline of the remainder at the end of it.

\section{Main results} \label{sec:main}

\subsection{Preliminaries} \label{sec:mainprelim}

On $\Dm_H$ equipped with its standard smooth structure, we define the distinguished boundary defining function (`bdf', here and below)
\begin{equation}
	x(z) \coloneqq \frac{1-|z|^2}{1+|z|^2}, \quad z\in \Dm_H.
	\label{eq:xbdf}\index{$x$}
\end{equation}
With $\omega = \arg(z)$\index{$\omega$} and $x$ as in \eqref{eq:xbdf}, the measure induced by the Riemannian volume form $\d V_H$ of $(\Dm_H^\circ, g_H)$ is given by $ x^{-2} \d x \d\omega$.

\subsubsection{Geodesics of $(\Dm^\circ_H,g_H)$}

There are several models for the manifold of oriented geodesics on $(\Dm_H^{\circ},g_H)$. Throughout the presentation, we will focus on a parameterization of the space of oriented geodesics by means of the manifold $\Gh = \add{(\Rm/2\pi\Zm)}_\beta\times \Rm_a$ by writing
\begin{equation}
	\gamma_{\beta,a}(t) \coloneqq  e^{i\beta} \frac{(2+ia) \x(t) + ia}{ia \x(t) - 2+ia}, \quad \x(t)\coloneqq \tanh \Big( \frac{t}{2} \Big), \quad (\beta,a,t)\in \add{(\Rm/2\pi\Zm)} \times \Rm\times \Rm.
	\label{eq:hyp_fan_beam}
\end{equation}
This parameterization arises from horocycles as we explain in Section\ \ref{sec:horocycle}. Orientation reversion arises via the involution
\begin{equation*}
    \S_A^H\colon \Gh\to \Gh, \qquad (\beta,a)\mapsto (\beta+\pi + 2\tan^{-1}a, -a).
\end{equation*}
The map $\S_A^H$ arises as the composition of the scattering map with the antipodal map, see Section\ \ref{ssec:scattering_map} for details. Some results will also be given in vertex parameterization \eqref{eq:Hgeo} for completeness.

\subsubsection{Hyperbolic X-ray transform}

Recall the definition of $I_0^H$ in Eq. \eqref{eq:hypXrt}. If $f\in C_c^\infty(\Dm_H)$, then the smoothness of $I_0^H f$ comes from the smoothness of the map $(\beta,a,t)\mapsto \gamma_{\beta,a}(t)$. To see why $I_0^H f$ has compact support, we define on $\Gh$ the function
\begin{equation}
	\muh(\beta,a) = \muh(a)\coloneqq \cos(\tan^{-1}a) = (1+a^2)^{-1/2}.
	\label{eq:mu}\index{$\muh$}
\end{equation}
Then a direct calculation shows that with $\gamma_{\beta,a}(t)$ as in \eqref{eq:hyp_fan_beam}, we have
\begin{equation}
	x(\gamma_{\beta,a}(t)) \le \muh(a), \qquad t\in \Rm,
	\label{eq:xcontrol}
\end{equation}
see e.g. \eqref{eq:xhoro}. In particular, if $f = 0$ on $x^{-1} (0,\varepsilon)$, where $0<\varepsilon<1$, then $I_0^H f =0$ on $\muh^{-1}(0,\varepsilon)$. Because $I_0^H f$ does not depend on orientation, we obtain the symmetry condition
\begin{equation*}
    (I_0^H f) \circ \S_A^H = I_0^H f \qquad \text{on }\quad  \Gh.
\end{equation*}

\subsubsection{Hyperbolic backprojection operator}
The unit sphere bundle of $(\Dm_H^{\circ}, g_H)$ is denoted
\begin{equation}
	S\Dm_H^{\circ} = \{(z,w)\in T\Dm_H^{\circ}\colon g_H (w,w) = 1\}.
	\label{eq:SDH}
\end{equation}
In Section \ref{sec:horocycle}, we define a smooth map
\begin{equation}
	\pih \colon S\Dm_H^{\circ} \to \Gh,
	\label{eq:F}
\end{equation}
mapping $(z,w)\in S\Dm_H^{\circ}$ to the unique oriented geodesic in $\Gh$ passing through $(z,w)$. With $\pih$ defined in \eqref{eq:F}, for $g\in C^\infty(\Gh)$, we define the hyperbolic backprojection of $g$ by
\begin{equation}
	(I_0^H)^\sharp g (z) \coloneqq \int_{S_z\Dm_H^{\circ}} g(\pih(z,w))\ \d S_z(w),
	\label{eq:backproj}
\end{equation}
where $\d S_z(w)$ is the Lebesgue measure on the unit tangent circle $S_z\Dm_H^{\circ}$.

\subsection{Extension of \texorpdfstring{$I_0^H$}{I 0 H} to non-compactly supported integrands}\label{sec:extendedI0}

We first explain how to extend definition \eqref{eq:hypXrt} to spaces of non-compactly supported smooth functions. To this end, it will be useful to compactify $\Gh$ into $\Ghbar \coloneqq \add{(\Rm/2\pi\Zm)}_\beta \times [-\infty,\infty]_a$, where the factor $[-\infty,\infty]_a$ is given the smooth structure of a compact manifold with boundary using the stereographic projection map so that the function $\muh$ defined in \eqref{eq:mu}, extended by zero on $\partial\Ghbar$, becomes a smooth bdf.

The manifold $\Ghbar$ is then diffeomorphic to the inward-pointing boundary of the Euclidean disk $(\Dm_E = \{z\in \Cm, |z|\le 1\}, |dz|^2)$, denoted $\partial_+ S\Dm_E = \add{(\Rm/2\pi\Zm)}_\beta \times [-\pi/2,\pi/2]_\alpha$ (with bdf $\mu(\beta,\alpha) = \cos\alpha$), via the horocycle-to-fan-beam map
\begin{equation}
    \Psihf \colon \Ghbar \to \partial_+ S\Dm_E, \qquad (\beta,a)\mapsto (\beta,\tan^{-1} a),
	\label{eq:Psihf}
\end{equation}
and where, by convention, $\tan^{-1} (\pm \infty) = \pm \frac{\pi}{2}$.

On the Euclidean disk, an important subspace of $C^\infty(\partial_+ S\Dm_E)$ for capturing the range of the X-ray transform is $\Calp(\partial_+ S\Dm_E)$, defined in \eqref{eq:CalphappE}. Accordingly, on $\Ghbar$ it will be crucial to define
\begin{equation}
	\Calp(\Ghbar) \coloneqq \Psihf^* \Calp(\partial_+ S\Dm_E),
	\label{eq:CalphaG}
\end{equation}
\add{where $\Psihf^*$ denotes the pullback by $\Psihf$.} The following characterization holds.
\begin{proposition} \label{prop:charac}
    A function $h$ on $\Ghbar$ belongs to $\Calp(\Ghbar)$ if and only if $h\in C^\infty(\Ghbar)$, $h=h\circ \S^H_A$, and when expressed in terms of the coordinates $(\beta + \tan^{-1}a, \muh)$ with $\muh$ as in \eqref{eq:mu}, the odd terms of the Taylor expansion of $h$ off of $\partial\Ghbar$ all vanish.
\end{proposition}

Let $\Cev(\Dm_H)$\index{$\Cev(\Dm_H)$} be the space of smooth functions on $\Dm_H$ admitting Taylor expansions off of $\Dm_H$ only made of even terms when written in terms of the special bdf $x$ defined in \eqref{eq:xbdf}. Then, a first observation is the following.

\begin{theorem}\label{thm:smoothmapping}
    For any $\gamma>-1$, we have
    \begin{equation}
	I_0^H (x^{2+2\gamma} \Cev (\Dm_H)) \subset \muh^{2\gamma+2} \Calp (\Ghbar).
	\label{eq:smoothmapping}
    \end{equation}	
\end{theorem}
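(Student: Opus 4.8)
The plan is to transfer the statement to the Euclidean disk via the projective equivalence of Proposition~\ref{prop:projEquiv}, invoke the corresponding known Euclidean mapping property, and pull the result back to $\Ghbar$. Denote by $\Phi$ the projective diffeomorphism $\Dm_H^\circ\to\Dm_E^\circ$ of Proposition~\ref{prop:projEquiv}: it is the identity on the common boundary circle, it carries each oriented hyperbolic geodesic onto the corresponding Euclidean chord as an unparametrized set, and, writing $w$ for the Euclidean coordinate and $\rho_E:=1-|w|^2$ for the standard Euclidean bdf, it satisfies the key relation $x^2 = \rho_E\circ\Phi$. This square-root mismatch of the two smooth structures at the boundary is precisely what makes $\Cev(\Dm_H)$ the right class: since its elements are exactly the smooth functions of $x^2$ near $\partial\Dm_H$ (and $\Phi$ preserves the boundary angle), the map $\Phi$ induces an identification $\Cev(\Dm_H)\cong C^\infty(\Dm_E)$, whence $x^{2+2\gamma}\Cev(\Dm_H)$ is carried onto $\rho_E^{1+\gamma}C^\infty(\Dm_E)$.

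Next I would intertwine the two transforms by changing the integration variable along each geodesic from hyperbolic arclength $t$ to Euclidean arclength $s$ on the corresponding chord. A direct computation of the Beltrami--Klein line element (the pushforward of $g_H$ under $\Phi$) gives the Jacobian $\tfrac{\d t}{\d s} = \sqrt{1-p^2}\,/\,\rho_E$, where $p$ is the signed Euclidean distance of the chord from the origin. Parametrizing chords by $\partial_+S\Dm_E = \Sm^1_\beta\times[-\pi/2,\pi/2]_\alpha$ one has $p=\sin\alpha$, so $\sqrt{1-p^2}=\cos\alpha=\mu$ is constant along each chord and factors out of the integral. Combining this with the first paragraph, for $f = x^{2+2\gamma}h$ with $h\in\Cev(\Dm_H)$ the weight bookkeeping $\rho_E^{1+\gamma}\cdot\rho_E^{-1}=\rho_E^{\gamma}$ yields the intertwining identity
\[
I_0^H f = \muh\,\Psihf^*\!\Big[\, I_0^E\big(\rho_E^{\gamma}\,(h\circ\Phi^{-1})\big)\Big],
\]
where I use $\Psihf^*\mu = \muh$ coming from \eqref{eq:mu} and \eqref{eq:Psihf}. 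The hypothesis $\gamma>-1$ enters here as exactly the condition ensuring that $\rho_E^{\gamma}(h\circ\Phi^{-1})$ has a controlled (integrable) trace on each chord.

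It then remains to apply the Euclidean mapping property established in \cite{Monard2019a,Mishra2019,Mishra2022}, namely $I_0^E\big(\rho_E^{\gamma}C^\infty(\Dm_E)\big)\subset \mu^{2\gamma+1}\Calp(\partial_+S\Dm_E)$; heuristically this reflects $\int_{-\mu}^{\mu}(\mu^2-s^2)^\gamma\,\d s = c_\gamma\,\mu^{2\gamma+1}$. Pulling back by $\Psihf$ and using the definition \eqref{eq:CalphaG} of $\Calp(\Ghbar)$ together with $\Psihf^*\mu=\muh$ shows that the bracketed term above lies in $\muh^{2\gamma+1}\Calp(\Ghbar)$; multiplying by the prefactor $\muh$ gives $I_0^H f\in\muh^{2\gamma+2}\Calp(\Ghbar)$, as claimed. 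The required $\S_A^H$-symmetry is consistent with, and automatic from, \eqref{eq:sym}, while the smoothness and even-trace conditions cut out by $\Calp$ are inherited directly from the Euclidean result through the smooth map $\Psihf$.

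The step I expect to be the main obstacle is the second paragraph: beyond computing the Jacobian, the delicate point is to verify that the chord parametrization induced by $\Phi$ matches the $\Psihf$-identification of $\Ghbar$ with $\partial_+S\Dm_E$ so that the three weights $\rho_E$, $\mu$ and $\muh$ align exactly and the factor $\sqrt{1-p^2}$ is genuinely constant along each geodesic. Once this alignment is pinned down, the remainder is the bookkeeping of weights and a direct appeal to the Euclidean range result.
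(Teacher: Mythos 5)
Your proposal is correct and follows essentially the same route as the paper: the identity you derive in your second paragraph is precisely the intertwining relation of Theorem \ref{thm:ProjEq} (equation \eqref{eq:XrayRelation2}), and the paper's proof likewise factors $I_0^H x^{2+2\gamma}$ as $\Psihf^{*}\mu \circ I_0^E d^\gamma \circ \Phi^{-*}$, invokes the Euclidean mapping property \eqref{eq:mappingI0E}, and pulls back through $\Psihf$ using the definition \eqref{eq:CalphaG}. The only cosmetic difference is that you re-derive the change-of-variables computation inline (in vertex rather than horocyclic coordinates, with the same Jacobian $\d t/\d u = \mu/d$) instead of citing it as a separate theorem.
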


\add{
\begin{remark}
    The  range of $\gamma$'s above is the largest possible in which \eqref{eq:smoothmapping} can hold. Indeed, it is easy to see that \eqref{eq:smoothmapping} fails when $\gamma = -1$, as $I_0^H (1)$ is not defined due to the completeness of the hyperbolic geodesic flow. The exponent is written in the form $2\gamma+2$ to facilitate the transfer of results that hold on the Euclidean disk via projective equivalence.
\end{remark}
}

\begin{remark} The result for this even structure implies mapping properties on smooth spaces. Indeed, combining \eqref{eq:smoothmapping} with the decomposition
\begin{align*}
C^\infty(\Dm_H) = \Cev(\Dm_H) + x \Cev(\Dm_H),
\end{align*}
we obtain
	\begin{align*}
	    I_0^H (x^{2+2\gamma} C^\infty(\Dm_H)) & \subset \muh^{2\gamma+2} \Calp (\Ghbar) + \muh^{2\gamma+3} \Calp (\Ghbar) \\
              &= \muh^{2\gamma+2} \{u\in C^\infty(\overline{\Gh}): u= \add{u \circ \S_A^H}\}.
	\end{align*}
\end{remark}

\begin{remark}
    The description of the spaces $\Cev(\Dm_H)$ and $\Calp (\Ghbar)$ in terms of constrained Taylor expansions is not invariant under change of bdf. They can be described invariantly if one discusses an {\em even structure} on the underlying manifolds with boundary, see e.g. \cite{Eptaminitakis2021}. In the present case, we will not discuss the issue further, and confine ourselves to the distinguished bdf's $x$ and $\muh$ defined in \eqref{eq:xbdf} and \eqref{eq:mu} (the even structure determined by the distinguished bdf $x$ is the same as the one determined by the Poincaré metric, see \cite[Sec.\ 2]{Eptaminitakis2021}).
\end{remark}

\subsection{Singular value decompositions}

We now turn toward mapping properties of $I_0^H$ between Hilbert spaces. Below we write
\begin{equation}
    L^2_\pm (\Gh, \muh^{-2\gamma}\d\beta\d a) \coloneqq L^2(\Gh, \muh^{-2\gamma}\d\beta\d a) \cap \ker ( (\S_A^H)^* \mp id),
    \label{eq:Lpm}
\end{equation}
\add{where $(\S_A^H)^*$ denotes the pullback by $\S_A^H$.}

\begin{theorem}\label{thm:bounded}
	Fix $\gamma>-1$. Then, the operator
	\begin{equation}
		I_0^H x^{2+2\gamma} \colon L^2(\Dm_H, x^{2\gamma+3}\ \d V_H) \to L_+^2(\Gh, \muh^{-2\gamma}\ \d\beta\d a),
		\label{eq:I0Hdomain}
	\end{equation}
	is bounded, with Hilbert space adjoint $(I_0^H x^{2+2\gamma})^* = x^{-1} (I_0^H)^\sharp\ \muh^{-2\gamma}$.
\end{theorem}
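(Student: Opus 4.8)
The plan is to treat the two assertions in sequence: first the $L^2$-boundedness of $T := I_0^H x^{2+2\gamma}$, and then the identification of its Hilbert-space adjoint, which comes essentially for free once $T$ is known to be bounded. For the boundedness I would pass to the Euclidean disk through the projective equivalence of Proposition~\ref{prop:projEquiv}. The Beltrami–Klein diffeomorphism $\Phi\colon \Dm_H^{\circ}\to\Dm_E^{\circ}$ sends hyperbolic geodesics to Euclidean chords and, together with $\Psihf$ of \eqref{eq:Psihf}, identifies the geodesic space $\Gh$ with the interior of $\partial_+ S\Dm_E$. Pulling back along $\Phi$ and $\Psihf$ conjugates $I_0^H x^{2+2\gamma}$ to a weighted Euclidean X-ray transform, whose boundedness between the relevant weighted $L^2$ spaces (carrying the boundary weight $\mu=\cos\alpha$) is available from \cite{Monard2019a,Mishra2019}. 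Using $\d V_H = x^{-2}\,\d x\,\d\omega$, the domain norm reads $\int |f|^2\, x^{2\gamma+1}\,\d x\,\d\omega$, and on the data side one has $\muh=\Psihf^*\mu$, since $\cos(\tan^{-1}a)=(1+a^2)^{-1/2}=\muh(a)$. The boundedness claim then follows once these weighted spaces are matched through the change of variables, and I would also record that $Tf$ lands in the symmetric subspace $L^2_+$ by the orientation-independence \eqref{eq:sym}.

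The principal obstacle is exactly this bookkeeping of boundary-defining functions and Jacobians under the projective change of variables: one must verify that the powers $2+2\gamma$, $2\gamma+3$ and $-2\gamma$ combine with the Jacobian of $\Phi$ and with the relation between $x$ and the Euclidean bdf so as to reproduce precisely the weighted Euclidean spaces in which boundedness is known. Theorem~\ref{thm:smoothmapping}, together with its even-structure refinement, is the qualitative counterpart of this computation and can be used to organize the power-counting at the boundary.

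Once $T$ is bounded, its Hilbert adjoint $T^*$ exists and is bounded, so it remains only to identify it. For $f\in C_c^\infty(\Dm_H^{\circ})$ and symmetric $g\in C_c^\infty(\Gh)$ I would compute the pairing directly: inserting \eqref{eq:backproj} and invoking the Santaló/coarea formula — using that the horocyclic parameterization \eqref{eq:hyp_fan_beam} pulls the Liouville measure on $S\Dm_H^{\circ}$ back to $\d\beta\,\d a\,\d t$ (from Section~\ref{sec:horocycle}) — and noting that $g\circ\pih$ is constant along the geodesic flow, Fubini gives
\[
\int_{\Gh} I_0^H(x^{2+2\gamma}f)\, g\, \muh^{-2\gamma}\,\d\beta\,\d a
= \int_{\Dm_H^{\circ}} x^{2+2\gamma} f\,(I_0^H)^\sharp(\muh^{-2\gamma}g)\,\d V_H
= \int_{\Dm_H^{\circ}} f\,\bigl(x^{-1}(I_0^H)^\sharp\muh^{-2\gamma}g\bigr)\,x^{2\gamma+3}\,\d V_H,
\]
all integrals converging absolutely for compactly supported smooth data. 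This is precisely $\langle Tf,g\rangle = \langle f,\, x^{-1}(I_0^H)^\sharp\muh^{-2\gamma}g\rangle$; by density of such $f$ and $g$ in the respective Hilbert spaces and boundedness of $T$, it identifies $T^* = x^{-1}(I_0^H)^\sharp\muh^{-2\gamma}$.

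The secondary obstacle here is to confirm that the horocyclic Santaló measure is exactly $\d\beta\,\d a$, with no extra density — this normalization of the parameterization \eqref{eq:hyp_fan_beam} is what makes the clean adjoint formula hold, and it is the only place where the precise geometry of $\pih$ enters.
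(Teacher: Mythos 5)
Your proposal is correct, and its two halves relate to the paper's proof differently. For boundedness you follow the same route as the paper: conjugate $I_0^H x^{2+2\gamma}$ by $\Phi^*$ and $\muh\Psihf^*$ using the intertwining relation \eqref{eq:XrayRelation2}, reducing everything to the known Euclidean boundedness \eqref{eq:I0dgamma}; the weight bookkeeping you flag as the principal obstacle is exactly what the paper's factorization \eqref{eq:factor} encodes, and it is settled by the identities $\Phi^* d = x^2$ (see \eqref{eq:bdfs}) and $\Phi^*\d V_E = x^3\,\d V_H$ (see \eqref{eq:vol_change}), which make the outer maps isometric isomorphisms. (A small citation point: for general $\gamma>-1$ the Euclidean input is \cite{Mishra2022}; the references \cite{Monard2019a,Mishra2019} you name cover essentially the case $\gamma=0$.) For the adjoint you take a genuinely shorter route: the paper obtains $(I_0^H x^{2+2\gamma})^*$ by dualizing the factorization (diagram \eqref{eq:factor_adj}), inserting the Euclidean adjoint $(I_0^E d^\gamma)^* = (I_0^E)^\sharp \mu^{-2\gamma-1}$, and then invoking the backprojection intertwining of Theorem \ref{thm:interadj} to recognize the composite as $x^{-1}(I_0^H)^\sharp \muh^{-2\gamma}$, whereas you identify the adjoint directly by pairing against compactly supported data and applying Santal\'o's formula (Proposition \ref{prop:SantaloH}) together with Fubini and the flow-invariance of $g\circ\pih$. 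The two routes share the same core --- the paper's proof of Theorem \ref{thm:interadj} opens with precisely your pairing computation, see \eqref{eq:firstComp} --- but yours is self-contained for this statement and avoids the Euclidean detour entirely, while the paper's version has the advantage of establishing Theorem \ref{thm:interadj} as a standalone tool that is reused later (in Proposition \ref{prop:interH} and in the second proof of Corollary \ref{cor:adj_onto}). Your final normalization worry is exactly the content of Lemma \ref{lm:pushforward}: the horocyclic parameterization pulls the Liouville form back to $\d\beta\wedge\d a\wedge\d t$ with no extra density, so the clean adjoint formula does hold.
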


\begin{remark}
    Theorem \ref{thm:bounded} can be viewed as a generalization in this setting of \cite[Cor. 4.5]{Eptaminitakis2022}. Indeed, combining the statement there with the discussion in Section \ref{sec:cosphere}, one sees that $I_0^H:x^{\delta}L^2(\Dm_H,\d V_H)\to \muh ^{\delta'}L^2_+(\Gh,\d\beta\d a)$ is bounded provided $\delta'<\delta$, $\delta'<0$, and $\delta>-1/2$. Setting $\delta=\gamma+1/2$ and $\delta'= \gamma$, one obtains \eqref{eq:I0Hdomain} if $\gamma\in(-1,0)$.
\end{remark}

We now give a full singular value decomposition of the operator \eqref{eq:I0Hdomain}.
As it will be inherited from that of the X-ray transform on the Euclidean disk, we recall some notation. Let us define the family of functions, for $n\in \Nm_0$ and $k\in \Zm$,
\begin{equation}
    \psi_{n,k}^\gamma (\beta,\alpha)= (\cos\alpha)^{2\gamma+1} e^{i(n-2k)(\beta+\alpha+\pi/2)} p_n^{\gamma} (\sin\alpha),
    \label{eq:psinkgamma}\index{$\psi_{n,k}^\gamma$} \quad \add{(\beta,\alpha)\in \partial_+ S\Dm_E},
\end{equation}
where $p_n^\gamma:[-1,1]\to \Rm$ is a scalar multiple of the $n$-th Jacobi polynomial\footnote{Commonly denoted $P^{(\gamma+1/2,\gamma+1/2)}_n$ in the literature. Up to a scalar, this is also the ultraspherical/Gegenbauer polynomial $C^{(\gamma+1)}_n$.}, orthogonal for the weight $(1-x^2)^{\gamma+1/2}$, normalized here for convenience so that $\int_{-1}^1 (p_n^\gamma(x))^2 (1-x^2)^{\gamma+1/2}\d x = (2\pi)^{-1}$. This makes $\{\psi_{n,k}^\gamma\}_{n\ge 0,\ k\in \Zm}$ an {\em orthonormal} basis for the space $L_+^2(\partial_+ S\Dm_E, \mu^{-2\gamma}\d\beta\d\alpha)$.

With $(I_0^E)^\sharp$, the backprojection operator associated with the Euclidean disk, we define
\begin{equation}
    Z_{n,k}^\gamma \coloneqq (I_0^E)^\sharp \mu^{-2\gamma-1} \psi_{n,k}^\gamma, \qquad n\ge 0, \qquad 0\le k\le n.
    \label{eq:Zernike}\index{$Z_{n,k}^\gamma$}
\end{equation}
Those are equal, up to normalization, to the generalized disk Zernike polynomials ($P_{n-k,k}^\gamma$ in the convention of \cite{Wuensche2005}), and they are a Hilbert basis for $L^2(\Dm_E,d^\gamma \d V_E)$, where
\begin{equation}
    d(z) \coloneqq 1-|z|^2 \in C^\infty(\Dm_E),
    \label{eq:d}
\end{equation}
and $\d V_E$ is the Euclidean volume form. An important map for what follows is the following:
\begin{equation}
    \Phi\colon \Dm_H\to \Dm_E, \qquad \Phi(z) \coloneqq \frac{2z}{1+|z|^2}.
    \label{eq:Phi}\index{$\Phi$}\index{$\Dm_E$}
\end{equation}
The map $\Phi$ is an isometry between $(\Dm_H^{\circ},g_H)$ and $(\Dm_E^{\circ},g_K)$, where $g_K$ is the Beltrami-Klein hyperbolic metric on the unit disk. The latter is projectively equivalent to the Euclidean metric, in the sense that the geodesics of the two metrics agree up to parameterization. It can be seen that the map $\Psihf$ in \eqref{eq:Psihf} is induced by $\Phi$, see Section \ref{ssec:models}, and that with $x$ and $d$ as in \eqref{eq:xbdf} and \eqref{eq:d}, one has
\begin{equation}\label{eq:bdfs}
    \Phi^* d = x^2.
\end{equation}
This implies that $\Phi^*:C^\infty(\Dm_E)\to \Cev(\Dm_H)$ is an isomorphism. Returning to $\Gh$, the family
\begin{equation}
    \psi_{n,k}^{\gamma,H} \coloneqq \muh \Psihf^*\ \psi_{n,k}^\gamma, \qquad n\ge 0, \quad k\in \Zm,
    \label{eq:psinkgammaH}
\end{equation}
is then a Hilbert basis of $L^2_+ (\Gh, \muh^{-2\gamma}\d\beta\d a)$ defined in \eqref{eq:Lpm}.

\begin{theorem}\label{thm:SVDH}
    Fix $\gamma>-1$.
    \begin{enumerate}[(a)]	
	\item The operator $(I_0^H x^{2+2\gamma})^* \colon L^2_+ (\Gh, \muh^{-2\gamma}\d\beta\d a) \to L^2(\Dm_H, x^{2\gamma+3}\ \d V_H)$ has kernel
	    \begin{equation*}
		\ker (I_0^H x^{2+2\gamma})^* = \mathrm{span} \left( \psi_{n,k}^{\gamma,H},\quad n\ge 0,\ k<0 \text{ or } k>n \right).
	    \end{equation*}

	\item The singular value decomposition (SVD) of
	    \begin{equation}
		I_0^H x^{2+2\gamma} \colon L^2(\Dm_H, x^{2\gamma+3}\ \d V_H) \to L_+^2(\Gh, \muh^{-2\gamma}\d\beta\d a) / \ker (I_0^H x^{2+2\gamma})^*	
	    \end{equation}
	    is given by $\left(\widehat{\Phi^* Z_{n,k}^\gamma}, \psi_{n,k}^{\gamma,H}, \sigma_{n,k}^\gamma\right)_{n\ge 0,\ 0\le k\le n}$.
	    Here, $Z_{n,k}^\gamma$, $\Phi$, $\psi_{n,k}^{\gamma,H}$ are respectively defined in \eqref{eq:Zernike}, \eqref{eq:Phi}, and \eqref{eq:psinkgammaH}, and the positive real numbers $\sigma_{n,k}^{\gamma}$ satisfy
	    \begin{equation}
		(\sigma_{n,k}^{\gamma})^2 \coloneqq \frac{2^{2\gamma+2} \pi}{n+1} \frac{B(n-k+1+\gamma, k+1+\gamma)}{B(n-k+1, k+1)},
		\label{eq:signkgammaH}\index{$\sigma_{n,k}^{\gamma})^2$}
	    \end{equation}
	    with $B(x,y) = \frac{\Gamma(x) \Gamma(y)}{\Gamma(x+y)}$ denoting the Beta function. Here and later, the notation $\widehat{\cdot}$ stands for the normalized function with respect to the corresponding norm.
    \end{enumerate}
\end{theorem}

\add{
\begin{remark}
    The convention of studying $I_0^H x^{2+2\gamma}$ on $L^2(\Dm_H, x^{2\gamma+3} \d V_H)$ (instead of the equivalent $I_0^H$ on $L^2(\Dm_H, x^{-2\gamma-1} \d V_H)$) is chosen so that the normal operators $(I_0^H x^{2+2\gamma})^* I_0^H x^{2+2\gamma}$ are all isomorphisms of the reference space $\Cev(\Dm_H)$, see Theorem \ref{thm:isomorphismH} below.
\end{remark}
}

\add{
\begin{remark}
    Similar SVD results exist for the exponentially weighted X-ray transform on $\Rm^2$, see \cite{Davison1981}. In spite of the difference in geometry at infinity, these two examples have in common that they are formulated in weighted $L^2$ spaces where the weight decays exponentially as unit-speed geodesic time goes to $\pm \infty$.
\end{remark}
}

\subsection{Distinguished differential operators and special relations}

On $\Dm_H$, define the differential operator
\begin{equation}
	\begin{aligned}
    \L_\gamma^H &= - x^{-2\gamma-1} \partial_x \left( x^{2\gamma+1} (1-x^2) \partial_x \right) - \frac{1}{1-x^2} \partial_\omega^2 + (1+\gamma)^2 id, \nonumber \\
    &=-  (1-x^2) \partial_x^2 - \left( \frac{2\gamma+1}{x} - (2\gamma+3)x \right)\partial_x - \frac{1}{1-x^2} \partial_\omega^2 + (1+\gamma)^2 id.
\end{aligned}
    \label{eq:L_wedge}\index{$\L_\gamma^H$}
\end{equation}
\add{ \begin{remark}\label{rmk:wedge}
    The operator $x^2 \L_\gamma^H$ is easily seen to be a second-order {\em 0-differential} operator, i.e., a linear combination  of the family $\{(x\partial_x)^a (x\partial_\omega)^b\}_{a,b\ge 0,\, a+b\le 2}$ with smooth coefficients, originally introduced in the work \cite{Mazzeo1987} to study elliptic problems on (asymptotically) hyperbolic manifolds. By definition, this makes $\L_{\gamma}^H$ a second-order {\em wedge} operator, as studied in \cite{Schulze1991} and usually suited for elliptic problems arising in wedge geometry.
\end{remark}
}

One may find that the space $\Cev(\Dm_H)$ described in Section \ref{sec:extendedI0} is a subspace of $L^2(\Dm_H, x^{2\gamma+3}\d V_H)$ if and only if $\gamma>-1$, that $\L_\gamma^H (\Cev(\Dm_H))\subset \Cev(\Dm_H)$, and that $(\L_\gamma^H, \Cev(\Dm_H))$\footnote{\add{Here, $(\L_\gamma^H, \Cev(\Dm_H))$ denotes the operator $\L_\gamma^H$ with domain $\Cev(\Dm_H)$.}} is symmetric with respect to the inner product $L^2(\Dm_H, x^{2\gamma+3}\d V_H)$. More specifically, we have the following proposition.
\begin{proposition}\label{prop:LgH}
    Fix $\gamma>-1$. The operator $(\L_\gamma^H, \Cev(\Dm_H))$ acting on the space $L^2(\Dm_H, x^{2\gamma+3}\d V_H)$ is essentially self-adjoint (with closure denoted $\L_\gamma^H$ below), with spectral decomposition
    \begin{equation*}
	\Phi^* Z_{n,k}^\gamma, \quad (n+1+\gamma)^2, \qquad n\ge 0,\quad 0\le k\le n.
    \end{equation*}
\end{proposition}

On $\Gh$, define the differential operator $T \coloneqq \partial_\beta - (1+a^2) \partial_a$\index{$T$}, and
\begin{equation}
    \T_\gamma^H = -T^2 + 2(\gamma+1) a T + (\gamma^2 - 2(\gamma+1)a^2 - 1) id,
    \label{eq:TgH}\index{$\mathcal{T}_\gamma^H$}
\end{equation}
so that $\T_\gamma^H = \muh \circ \Psihf^{*} \circ \T_\gamma \circ \Psihf^{-*} \circ \muh^{-1}$, where $\T_\gamma$ is the operator on $\partial_+S \Dm_E$ defined in \eqref{eq:T_gamma_Eucl}.
Then, if $\gamma>-1$, one finds  that $\muh^{2\gamma+2} \Calp (\Ghbar)\subset L^2_+ (\Gh, \muh^{-2\gamma}\d\beta\d a)$, that $\T_\gamma^H (\muh^{2\gamma+2} \Calp(\Ghbar))\subset \muh^{2\gamma+2} \Calp(\Ghbar)$ (using \cite[eq. (28)]{Mishra2022}), and that the operator $(\T_\gamma^H, \muh^{2\gamma+2} C_{\alpha,+}^\infty(\Ghbar))$ is symmetric with respect to $L^2_+ (\Gh, \muh^{-2\gamma}\d\beta\d a)$. More specifically, we have the following proposition.
\begin{proposition}\label{prop:TgH}
    Fix $\gamma>-1$. The operator $(\T_\gamma^H, \muh^{2\gamma+2} \Calp(\Ghbar))$ acting on $L^2_+ (\Gh, \muh^{-2\gamma}\d\beta\d a)$ is essentially self-adjoint (with closure denoted $\T_\gamma^H$ below), with spectral decomposition
    \begin{equation*}
	\psi_{n,k}^{\gamma,H}, \quad (n+1+\gamma)^2, \qquad n\ge 0,\quad k\in \Zm.
    \end{equation*}	
\end{proposition}

\begin{proposition}\label{prop:interH}
    We have the following intertwining properties:
    \begin{align}
	(I_0^H x^{2+2\gamma})^* \circ \T_\gamma^H &= \L_\gamma^H \circ (I_0^H x^{2+2\gamma})^* \quad \text{on} \quad \muh^{2\gamma+2} \Calp(\Ghbar),
	\label{eq:interadjH} \\
	I_0^H x^{2+2\gamma} \circ \L_\gamma^H &= \T_\gamma^H \circ I_0^H x^{2+2\gamma} \quad \text{on} \quad \Cev(\Dm_H).
	\label{eq:interH}
    \end{align}	
\end{proposition}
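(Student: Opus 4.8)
The plan is to exploit the fact that all three objects entering the statement are put in diagonal form by a single singular system, with a crucial matching of spectra. By Proposition~\ref{prop:LgH}, the (normalized) functions $\widehat{\Phi^* Z_{n,k}^\gamma}$, $0\le k\le n$, form a complete orthonormal set of eigenfunctions of $\L_\gamma^H$ with eigenvalue $(n+1+\gamma)^2$; by Proposition~\ref{prop:TgH}, the $\psi_{n,k}^{\gamma,H}$, $k\in\Zm$, form a complete orthonormal set of eigenfunctions of $\T_\gamma^H$ with the \emph{same} eigenvalue $(n+1+\gamma)^2$; and by Theorem~\ref{thm:SVDH} the operator $I_0^H x^{2+2\gamma}$ carries $\widehat{\Phi^* Z_{n,k}^\gamma}$ to $\sigma_{n,k}^\gamma\,\psi_{n,k}^{\gamma,H}$ for $0\le k\le n$, while its adjoint annihilates $\psi_{n,k}^{\gamma,H}$ for $k<0$ or $k>n$ by \eqref{eq:keradjointH}. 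The coincidence of the two eigenvalue sequences is exactly what forces the intertwining, so the whole argument rests on these three results.

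First I would record that both sides of each identity take values in spaces of smooth functions. By Theorem~\ref{thm:smoothmapping} one has $I_0^H x^{2+2\gamma}(\Cev(\Dm_H))\subset \muh^{2\gamma+2}\Calp(\Ghbar)$, while the inclusions $\L_\gamma^H(\Cev(\Dm_H))\subset\Cev(\Dm_H)$ and $\T_\gamma^H(\muh^{2\gamma+2}\Calp(\Ghbar))\subset\muh^{2\gamma+2}\Calp(\Ghbar)$ were noted before Propositions~\ref{prop:LgH} and \ref{prop:TgH}. Hence for $f\in\Cev(\Dm_H)$ both $I_0^H x^{2+2\gamma}\L_\gamma^H f$ and $\T_\gamma^H I_0^H x^{2+2\gamma}f$ are genuine elements of $\muh^{2\gamma+2}\Calp(\Ghbar)$, i.e.\ continuous functions, and symmetrically for the adjoint identity posed on $\muh^{2\gamma+2}\Calp(\Ghbar)$.

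Next I would verify the identities at the level of $L^2$ and then upgrade. For $f\in\Cev(\Dm_H)\subset L^2(\Dm_H,x^{2\gamma+3}\d V_H)$, expand $f=\sum_{0\le k\le n} c_{n,k}\,\widehat{\Phi^* Z_{n,k}^\gamma}$. Since $\Cev(\Dm_H)$ is a core on which $\L_\gamma^H$ is essentially self-adjoint, its differential-operator action agrees with the spectral multiplier, so $\L_\gamma^H f=\sum c_{n,k}(n+1+\gamma)^2\,\widehat{\Phi^* Z_{n,k}^\gamma}$ in $L^2$; applying the SVD gives $I_0^H x^{2+2\gamma}\L_\gamma^H f=\sum c_{n,k}(n+1+\gamma)^2\sigma_{n,k}^\gamma\,\psi_{n,k}^{\gamma,H}$. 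On the other hand $I_0^H x^{2+2\gamma}f=\sum c_{n,k}\sigma_{n,k}^\gamma\,\psi_{n,k}^{\gamma,H}$ lies in the core $\muh^{2\gamma+2}\Calp(\Ghbar)$ of $\T_\gamma^H$, so by Proposition~\ref{prop:TgH} one gets $\T_\gamma^H I_0^H x^{2+2\gamma}f=\sum c_{n,k}\sigma_{n,k}^\gamma(n+1+\gamma)^2\psi_{n,k}^{\gamma,H}$. The two expansions agree because the eigenvalues match, proving \eqref{eq:interH} in $L^2_+(\Gh,\muh^{-2\gamma}\d\beta\d a)$; by the previous step both sides are continuous functions, so equality in $L^2$ upgrades to pointwise equality, which is \eqref{eq:interH}. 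The adjoint relation \eqref{eq:interadjH} follows by the same bookkeeping applied to $h=\sum_{n,k\in\Zm} b_{n,k}\psi_{n,k}^{\gamma,H}$, splitting into the ranges $0\le k\le n$, where both compositions act as multiplication by $(n+1+\gamma)^2\sigma_{n,k}^\gamma$ on $\widehat{\Phi^* Z_{n,k}^\gamma}$, and $k<0$ or $k>n$, where both sides vanish by the kernel description \eqref{eq:keradjointH}.

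I expect the main obstacle to be the one point requiring genuine care: justifying that the termwise, $L^2$-convergent computations above really compute the \emph{differential-operator} actions of $\L_\gamma^H$ and $\T_\gamma^H$ on the smooth functions at hand, rather than only the abstract spectral multipliers. This is precisely what essential self-adjointness on the cores $\Cev(\Dm_H)$ and $\muh^{2\gamma+2}\Calp(\Ghbar)$ secures (Propositions~\ref{prop:LgH} and \ref{prop:TgH}), so the obstacle is dissolved by invoking those statements. Alternatively, one could bypass the spectral route and instead transport a known Euclidean intertwining relation between $\L_\gamma$ on $\Dm_E$ and $\T_\gamma$ on $\partial_+S\Dm_E$ through the conjugations $\Phi^*$ and $\Psihf^*$ together with the weight identity $\Phi^* d^{1+\gamma}=x^{2+2\gamma}$ coming from \eqref{eq:bdfs}; this is the conceptual source of the matching eigenvalues and would yield the operator identity directly on the smooth spaces, at the cost of spelling out the precise conjugation formulas.
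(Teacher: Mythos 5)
Your route is genuinely different from the paper's. The paper proves \eqref{eq:interadjH} by conjugation: it writes $\L_\gamma^H = \Phi^*\circ \L_\gamma\circ \Phi^{-*}$ and $\T_\gamma^H = \muh\Psihf^{*}\circ \T_\gamma\circ \Psihf^{-*}\muh^{-1}$, factors $(I_0^H x^{2+2\gamma})^*$ through the Euclidean adjoint $(I_0^E d^\gamma)^* = (I_0^E)^\sharp \mu^{-2\gamma-1}$ via Theorems \ref{thm:bounded} and \ref{thm:interadj}, and imports the Euclidean intertwining relation \cite[Lemma 8]{Mishra2022} in a commutative diagram of smooth function spaces; \eqref{eq:interH} then follows by taking adjoints, using symmetry of the two operators on their cores. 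This is precisely the ``alternative'' you sketch in your closing paragraph, whereas your main argument diagonalizes all three operators simultaneously via Propositions \ref{prop:LgH}, \ref{prop:TgH} and Theorem \ref{thm:SVDH}. There is no circularity in doing so: those results, as well as Theorems \ref{thm:smoothmapping}, \ref{thm:bounded} and \ref{thm:isomorphismH}, are proved in the paper without appeal to Proposition \ref{prop:interH}. For \eqref{eq:interH} your argument is complete: every input fed to $\L_\gamma^H$ or $\T_\gamma^H$ is known to lie in the relevant core ($f$ and $\L_\gamma^H f$ in $\Cev(\Dm_H)$; $I_0^H x^{2+2\gamma} f$ in $\muh^{2\gamma+2}\Calp(\Ghbar)$ by Theorem \ref{thm:smoothmapping}), so differential action and spectral multiplier agree there, and the pointwise upgrade from $L^2$ equality of continuous functions is legitimate.

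The gap is in \eqref{eq:interadjH}, at the words ``and symmetrically for the adjoint identity.'' Nothing symmetric is available: Theorem \ref{thm:smoothmapping} concerns $I_0^H$ itself, and none of the results you invoke shows that $(I_0^H x^{2+2\gamma})^* h$ is smooth --- let alone that it lies in the core $\Cev(\Dm_H)$ --- when $h\in \muh^{2\gamma+2}\Calp(\Ghbar)$. Without this, the composition $\L_\gamma^H\circ (I_0^H x^{2+2\gamma})^*$ cannot be read as a differential operator applied to a smooth function (which is what the statement asserts and what the paper's diagram delivers), and your termwise computation only identifies the action of the self-adjoint closure on $(I_0^H x^{2+2\gamma})^* h$; even that step requires the unstated domain check $\sum_{n,k}(n+1+\gamma)^4|\sigma_{n,k}^\gamma b_{n,k}|^2<\infty$. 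In the paper this missing mapping property is exactly what the factorization through $(I_0^E)^\sharp\mu^{-2\gamma-1}$ provides, since the Euclidean backprojection maps $\mu^{2\gamma+1}\Calp(\partial_+ S\Dm_E)$ into $C^\infty(\Dm_E)$. Within your framework the gap is patchable: $h$ lies in the core of $\T_\gamma^H$, which is invariant under $\T_\gamma^H$, so its coefficients $b_{n,k}$ decay rapidly in $n$; the $\sigma_{n,k}^\gamma$ are bounded (by boundedness of $I_0^H x^{2+2\gamma}$, Theorem \ref{thm:bounded}); hence $(I_0^H x^{2+2\gamma})^* h$ has rapidly decaying coefficients and belongs to $\bigcap_{s\ge 0} H_w^{s,\gamma}(\Dm_H) = \Cev(\Dm_H)$ by Theorem \ref{thm:isomorphismH}\ref{item:isomorphismH2}. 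With that supplement your proof closes; as written, it does not.
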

Combining \eqref{eq:interH} and \eqref{eq:interadjH}, we see that the operator $(I^H_0 x^{2+2\gamma})^* I^H_0 x^{2+2\gamma}$ commutes with $\L_\gamma^H$, and by symmetry arguments, also with the $L^2(\Dm_H, x^{2\gamma+3}\d V_H)$-graph closure of $(-\partial_\omega^2, \Cev(\Dm_H))$, denoted $-\partial_\omega^2$ below.

Given the spectrum of each of the operators $(I^H_0 x^{2+2\gamma})^* I^H_0 x^{2+2\gamma}$, $\L_\gamma^H$, and $-\partial_\omega^2$, we arrive at the following result.
\begin{theorem}\label{thm:FuncRelH}
    Fix $\gamma>-1$ and define $\D_\gamma^H \coloneqq (\L_\gamma^H)^{1/2} - \gamma- 1$\index{$ \D_\gamma^H$} via functional calculus, as well as $D_\omega \coloneqq \frac{1}{i} \partial_\omega$. Then, we have the functional relation
    \begin{equation}
	(I^H_0 x^{2+2\gamma})^* I^H_0 x^{2+2\gamma} = \frac{2^{2\gamma+2} \pi}{\D_\gamma^H+1} \frac{B( (\D_\gamma^H + D_\omega)/2 +1+\gamma, (\D_\gamma^H - D_\omega)/2 +1+\gamma)}{B((\D_\gamma^H + D_\omega)/2+1, (\D_\gamma^H - D_\omega)/2+1)},
	\label{eq:FuncRelH}
    \end{equation}	
    as bounded operators on $L^2(\Dm_H, x^{2\gamma+3}\d V_H)$.
    For $\gamma=0$, this reduces to
    \begin{equation}
	4\pi (\L_0^H)^{-1/2} = (I^H_0 x^{2})^* I^H_0 x^{2} = x^{-1} (I^H_0)^\sharp I^H_0 x^2 \qquad \text{on} \quad L^2(\Dm_H, x^3\d V_H).
	\label{eq:FuncRelH0}
    \end{equation}
\end{theorem}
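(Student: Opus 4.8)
The plan is to prove \eqref{eq:FuncRelH} by simultaneously diagonalizing both sides against the Hilbert basis $\{\Phi^* Z_{n,k}^\gamma\}_{n\ge 0,\,0\le k\le n}$ of $L^2(\Dm_H, x^{2\gamma+3}\d V_H)$ and checking that the eigenvalues agree. By Theorem \ref{thm:SVDH}(b), the normal operator $N := (I_0^H x^{2+2\gamma})^* I_0^H x^{2+2\gamma}$ acts diagonally on this basis, with $N\,\Phi^* Z_{n,k}^\gamma = (\sigma_{n,k}^\gamma)^2\,\Phi^* Z_{n,k}^\gamma$ and $(\sigma_{n,k}^\gamma)^2$ as in \eqref{eq:signkgammaH}; since these eigenvalues tend to $0$, $N$ is bounded (indeed compact). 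As $\{\Phi^* Z_{n,k}^\gamma\}$ is complete, it therefore suffices to show that the right-hand side of \eqref{eq:FuncRelH}, read through functional calculus, reproduces $(\sigma_{n,k}^\gamma)^2$ on each $\Phi^* Z_{n,k}^\gamma$.

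To evaluate that right-hand side I would first record the joint spectral data of the operators entering it. By Proposition \ref{prop:LgH}, $\L_\gamma^H\,\Phi^* Z_{n,k}^\gamma = (n+1+\gamma)^2\,\Phi^* Z_{n,k}^\gamma$, so $\D_\gamma^H = (\L_\gamma^H)^{1/2}-\gamma-1$ has eigenvalue $n$ there. For the angular operator I would use that $Z_{n,k}^\gamma$, being up to scale the disk Zernike polynomial $P_{n-k,k}^\gamma$, has pure angular frequency $n-2k$ in $\arg$ on $\Dm_E$, and that $\Phi$ preserves arguments (it scales $z$ by the positive factor $\tfrac{2}{1+|z|^2}$); hence $D_\omega = \tfrac1i\partial_\omega$ has eigenvalue $n-2k$ on $\Phi^* Z_{n,k}^\gamma$, giving $(\D_\gamma^H\pm D_\omega)/2 \mapsto n-k,\,k$. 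A point worth making explicit is that $\D_\gamma^H$ and $D_\omega$ commute, and that since $B(x,y)=B(y,x)$ the scalar function defining the right-hand side is even in $D_\omega$; thus the right-hand side is genuinely a function of the commuting self-adjoint pair $(\L_\gamma^H,-\partial_\omega^2)$, with which $N$ was shown to commute after Proposition \ref{prop:interH}, and the defining function is bounded on the joint spectrum, so the functional calculus yields a bounded operator.

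Substituting $\D_\gamma^H\mapsto n$ and $(\D_\gamma^H\pm D_\omega)/2\mapsto n-k,\,k$ into \eqref{eq:FuncRelH} then produces $\frac{2^{2\gamma+2}\pi}{n+1}\frac{B(n-k+1+\gamma,k+1+\gamma)}{B(n-k+1,k+1)}=(\sigma_{n,k}^\gamma)^2$, matching $N$; as two bounded operators agreeing on a Hilbert basis they coincide, proving \eqref{eq:FuncRelH}. For $\gamma=0$ the Beta-ratio collapses to $1$ and $\D_0^H+1=(\L_0^H)^{1/2}$, so $N=4\pi(\L_0^H)^{-1/2}$; the last identity in \eqref{eq:FuncRelH0} follows by inserting the adjoint formula $(I_0^H x^2)^*=x^{-1}(I_0^H)^\sharp$ from Theorem \ref{thm:bounded} at $\gamma=0$.

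The computation itself is mechanical; the part needing the most care is making the functional-calculus statement rigorous. Concretely, I expect the main obstacle to be confirming that the joint spectrum of $(\D_\gamma^H,D_\omega)$ is exactly the constrained lattice $\{(n,n-2k):0\le k\le n\}$, so that the Beta-function arguments $(\D_\gamma^H\pm D_\omega)/2+1+\gamma$ and $(\D_\gamma^H\pm D_\omega)/2+1$ stay strictly positive (using $\gamma>-1$) and the defining function stays bounded, together with pinning down the angular frequency of $Z_{n,k}^\gamma$ and its invariance under $\Phi^*$. Completeness of $\{\Phi^* Z_{n,k}^\gamma\}$ is what guarantees there is no extra joint spectrum and thereby licenses the passage from eigenvalue-matching to operator equality.
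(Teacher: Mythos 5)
Your proposal is correct and follows essentially the same route as the paper: both sides are diagonalized against the complete orthogonal system $\{\Phi^* Z_{n,k}^\gamma\}$, using $\D_\gamma^H \Phi^* Z_{n,k}^\gamma = n\,\Phi^* Z_{n,k}^\gamma$ and $D_\omega \Phi^* Z_{n,k}^\gamma = (n-2k)\Phi^* Z_{n,k}^\gamma$ together with the singular value formula \eqref{eq:signkgammaH}, and equality of bounded operators follows from agreement on this system. The only cosmetic difference is that you re-derive the angular eigenvalue from the frequency of $Z_{n,k}^\gamma$ and the argument-preservation of $\Phi$, where the paper simply cites \cite[Eq.\ (14)]{Mishra2022}.
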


\add{
\begin{remark}
    The intertwining properties in Proposition \ref{prop:interH} and the functional relation with differential operators in \eqref{eq:FuncRelH0} are Poincar\'e disk analogues of results on the Euclidean $\Rm^2$ (involving the Euclidean Laplacian, see  \cite[Lemma 2.1 and Theorem 3.1, respectively]{Helgason2010}) and on the Euclidean unit disk (with the Keldysh operator $\L_0$ defined Section \ref{sec:IDOs}, see \cite[Theorems 9 and 11 respectively]{Monard2019a}).
\end{remark}
}

\subsection{Sobolev mapping properties}
\label{sec:Sobolev_Mapping_Properties}

For $s \ge 0$, define the space\footnote{The $w$ subscript is in reference to the fact that $\L_\gamma^H$ is a wedge operator.}
\begin{equation}
    H_w^{s,\gamma} (\Dm_H) \coloneqq \D \left((\L_{\gamma}^H)^{s/2}\right),
    \label{eq:Sobw}\index{$H_w^{s,\gamma} (\Dm_H)$}
\end{equation}
which can be obtained as the completion of $\Cev(\Dm_H)$ for the norm
\begin{equation}
    \|u\|_{H_w^{s,\gamma}}^2 = \sum_{n,k} (n+1+\gamma)^{2s} |u_{n,k}|^2, \qquad u = \sum_{n,k} u_{n,k} \widehat{\Phi^* Z_{n,k}^\gamma}.
    \label{eq:Hsnorm}
\end{equation}

\begin{theorem}\label{thm:isomorphismH} Fix $\gamma>-1$. Then:
    \begin{enumerate}[(a)]
	\item\label{item:isomorphismH1} for every $s\ge 0$, there exist $C_1, C_2$ such that for every $f\in \Cev(\Dm_H)$,
	    \begin{equation}
       \begin{aligned}
		&C_1 \|(I_0^H x^{2+2\gamma})^* I_0^Hx^{2+2\gamma}f \|_{H_w^{s+\min (1,1+\gamma),\gamma}}\\ &\le  \|f\|_{H_w^{s,\gamma}}\\& \le C_2 \|(I_0^H x^{2+2\gamma})^* I_0^H x^{2+2\gamma}f \|_{H_w^{s+\max(1,1+\gamma),\gamma}}.
      \end{aligned}
		\label{eq:tame}
	    \end{equation}

	\item\label{item:isomorphismH2} We have $\cap_{s\ge 0} H_w^{s,\gamma} (\Dm_H) = \Cev(\Dm_H)$.
	\item\label{item:isomorphismH3} The operator $(I_0^H x^{2+2\gamma})^* I_0^H x^{2+2\gamma}$ is an isomorphism of $\Cev(\Dm_H)$. 	
    \end{enumerate}
\end{theorem}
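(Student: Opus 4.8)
The plan is to reduce the entire statement to the joint diagonalization provided by Theorem \ref{thm:SVDH} and Proposition \ref{prop:LgH}. Write $N := (I_0^H x^{2+2\gamma})^* I_0^H x^{2+2\gamma}$ and expand $f = \sum_{n,k} u_{n,k}\,\widehat{\Phi^* Z_{n,k}^\gamma}$ in the orthonormal basis of $L^2(\Dm_H, x^{2\gamma+3}\d V_H)$. The SVD gives $N f = \sum_{n,k} (\sigma_{n,k}^\gamma)^2\, u_{n,k}\,\widehat{\Phi^* Z_{n,k}^\gamma}$, while the same basis diagonalizes $\L_\gamma^H$ with eigenvalue $(n+1+\gamma)^2$; hence $N$, the powers $(\L_\gamma^H)^{s/2}$, and the norms $\|\cdot\|_{H_w^{s,\gamma}}$ are simultaneously diagonal, and each of the three claims becomes a termwise comparison between the sequences $(\sigma_{n,k}^\gamma)^2$ and $(n+1+\gamma)$.

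For part (\ref{item:isomorphismH1}) the core task is to obtain sharp two-sided bounds on the singular values \eqref{eq:signkgammaH}, uniform in $k$. Expanding the Beta functions via $B(x,y)=\Gamma(x)\Gamma(y)/\Gamma(x+y)$ gives, with $m := n-k$,
\[
    \frac{B(m+1+\gamma,\,k+1+\gamma)}{B(m+1,\,k+1)} = \frac{\Gamma(m+1+\gamma)}{\Gamma(m+1)}\cdot\frac{\Gamma(k+1+\gamma)}{\Gamma(k+1)}\cdot\frac{\Gamma(n+2)}{\Gamma(n+2+2\gamma)}.
\]
The elementary uniform comparability $\Gamma(j+1+\gamma)/\Gamma(j+1) \asymp (j+1)^\gamma$ for integers $j\ge 0$ (valid since $\gamma>-1$) then yields $(\sigma_{n,k}^\gamma)^2 \asymp (n+1)^{-1-2\gamma}(k+1)^\gamma(n-k+1)^\gamma$, with constants depending only on $\gamma$. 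Optimizing $(k+1)^\gamma(n-k+1)^\gamma$ over $0\le k\le n$---extremal at the endpoints $k\in\{0,n\}$ and near the center $k\approx n/2$, with the roles of maximum and minimum exchanged according to the sign of $\gamma$---gives the uniform bound
\[
    c\,(n+1+\gamma)^{-\max(1,1+\gamma)} \le (\sigma_{n,k}^\gamma)^2 \le C\,(n+1+\gamma)^{-\min(1,1+\gamma)}.
\]
Comparing Fourier coefficients term by term, the lower bound yields the right inequality and the upper bound yields the left inequality of \eqref{eq:tame}. I expect this uniform-in-$k$ Beta-ratio estimate to be the main computational obstacle, since the extremizing index (and hence the effective exponent) switches at $\gamma=0$, and the subleading Gamma-factor corrections must be controlled uniformly down to $k=0$ and $k=n$.

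For part (\ref{item:isomorphismH2}), one inclusion is immediate: $\L_\gamma^H$ preserves $\Cev(\Dm_H)$, so $\Cev(\Dm_H)\subset \D((\L_\gamma^H)^{s/2}) = H_w^{s,\gamma}(\Dm_H)$ for every $s$. For the converse I would transport the problem to the Euclidean disk through the isomorphism $\Phi^*\colon C^\infty(\Dm_E)\to \Cev(\Dm_H)$ supplied by \eqref{eq:bdfs}: membership in $\cap_{s}H_w^{s,\gamma}(\Dm_H)$ is precisely rapid decay (faster than any power of $n$) of the generalized Zernike coefficients, so the claim is equivalent to the spectral characterization $C^\infty(\Dm_E) = \{u\in L^2(\Dm_E,d^\gamma\,\d V_E):\ \text{the } Z_{n,k}^\gamma\text{-coefficients of } u \text{ decay rapidly in } n\}$. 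I would establish this either by citation from the Euclidean-disk references \cite{Monard2019a,Mishra2019,Mishra2022}, or, failing that, by combining interior elliptic regularity for the operator on $\Dm_E$ corresponding to $\L_\gamma^H$ (elliptic in the interior) with the boundary even structure determined by the bdf $d$. This converse is the step most likely to require genuine work, as it is a boundary regularity statement for a degenerate (wedge) operator rather than a classical elliptic one.

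Finally, part (\ref{item:isomorphismH3}) is a formal consequence of the first two parts. All singular values are strictly positive, so $N$ is injective on $\Cev(\Dm_H)$. The left inequality in \eqref{eq:tame} shows that $N$ maps $H_w^{s,\gamma}$ boundedly into $H_w^{s+\min(1,1+\gamma),\gamma}$, whence $N(\Cev(\Dm_H))\subset \cap_s H_w^{s,\gamma} = \Cev(\Dm_H)$ continuously; the right inequality shows that the diagonal inverse $N^{-1}$ (with entries $(\sigma_{n,k}^\gamma)^{-2}$) maps $H_w^{s+\max(1,1+\gamma),\gamma}$ boundedly into $H_w^{s,\gamma}$, so $N^{-1}$ likewise preserves $\Cev(\Dm_H)$ and is continuous there, giving surjectivity together with a continuous inverse. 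Hence $N$ is a continuous bijection of the Fr\'echet space $\Cev(\Dm_H)$ with continuous inverse, i.e. an isomorphism.
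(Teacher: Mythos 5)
Your proposal is correct, and its skeleton---simultaneous diagonalization of $N=(I_0^H x^{2+2\gamma})^* I_0^H x^{2+2\gamma}$ and $\L_\gamma^H$ in the basis $\{\widehat{\Phi^* Z_{n,k}^\gamma}\}$, reduction of all three parts to two-sided singular-value bounds plus the spectral characterization of smoothness---matches the paper's. The difference is where the key estimate for part \ref{item:isomorphismH1} comes from. The paper never touches the Beta quotient: it conjugates the normal operator into the Euclidean one, $N = \Phi^*\circ (I_0^E d^\gamma)^* I_0^E d^\gamma \circ \Phi^{-*}$, observes that $\Phi^{-*}$ extends to an isometry $H_w^{s,\gamma}(\Dm_H)\to \wtH^{s,\gamma}(\Dm_E)$, and then quotes the Euclidean tame mapping property \eqref{eq:SobolevMapping} (whose source is \cite[Lemma 12]{Mishra2022}); part \ref{item:isomorphismH2} is the same transport plus the citation \cite[Lemma 16.(c)]{Mishra2022}, and part \ref{item:isomorphismH3} is formal, exactly as in your last paragraph. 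You instead re-derive the content of \cite[Lemma 12]{Mishra2022} from scratch, and your computation checks out: the Gamma-factorization of the Beta ratio, the uniform comparability $\Gamma(j+1+\gamma)/\Gamma(j+1)\asymp (j+1)^\gamma$ for $\gamma>-1$, and the optimization of $\left[(k+1)(n-k+1)\right]^\gamma$ over $0\le k\le n$ (endpoints versus center, roles swapping with the sign of $\gamma$) do give $c\,(n+1+\gamma)^{-\max(1,1+\gamma)}\le (\sigma_{n,k}^\gamma)^2 \le C\,(n+1+\gamma)^{-\min(1,1+\gamma)}$ uniformly in $k$, from which \eqref{eq:tame} follows termwise since $N$ acts diagonally with eigenvalues $(\sigma_{n,k}^\gamma)^2$ by Theorem \ref{thm:SVDH}. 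What your route buys is self-containedness: the ``main computational obstacle'' you flag is real, and you actually resolve it rather than outsourcing it. What the paper's route buys is brevity, and it makes explicit the Sobolev isometry under $\Phi^{-*}$---a fact you use implicitly in part \ref{item:isomorphismH2} when you identify $\cap_{s\ge 0} H_w^{s,\gamma}(\Dm_H)$ with rapid decay of Euclidean Zernike coefficients; for that part your primary route (citation) is exactly the paper's, and the precise reference you want is \cite[Lemma 16.(c)]{Mishra2022}, so your elliptic-regularity fallback is not needed.
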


\begin{remark}
    For $\gamma=0$, since $H^{0,0}_w= L^2(\Dm_H,x^{3}\d V_H)=x^{-3/2}L^2(\Dm_H,\d V_H)$, \eqref{eq:tame} implies
    \begin{equation}\label{eq:stability_continuity}
	C_1 \|f\|_{x^{-3/2}L^2(\Dm_H,\d V_H)}\le \|(I_0^H x^{2})^* I_0^H x^{2}f \|_{H_w^{1,0}} \le C_2 \|f\|_{x^{-3/2}L^2(\Dm_H,\d V_H)}.
    \end{equation}
    By \cite[Theorem 1 and Propositions 3.3 and 4.4]{Eptaminitakis2022}, one has a similar estimate for the normal operator associated with $I_0^H$ in terms of weighted 0-Sobolev spaces of Mazzeo-Melrose (\cite{Mazzeo1987}), which are natural Sobolev spaces in hyperbolic geometry: For $f\in \dot{C}^\infty(\Dm_H)$ one has, for $\delta\in (-1/2,1/2)$ and $ s\geq 0$,
    \begin{equation}\label{eq:edge_mapping}
	C_1\|f\|_{x^\delta H_0^s(\Dm_H,\d V_H)}\leq \|(I_0^H)^\sharp I_0^H f\|_{x^\delta H_0^{s+1}(\Dm_H,\d V_H)}
	\leq C_2\|f\|_{x^\delta H_0^s(\Dm_H,\d V_H)} .
    \end{equation}
    Here, $\dot{C}^\infty(\Dm_H)$ denotes smooth functions on $\Dm_H$ vanishing with all derivatives to infinite order at $\partial \Dm_H$, and for $k\in \mathbb{N}_0$,
    \begin{equation}
    \begin{aligned}
	&x^\delta H^k_0(\Dm_H,\d V_H)\\ &\coloneqq
\big\{u\in H_{loc}^k(\Dm^{\circ}_H):(x\partial_x)^m(x \partial _{\omega})^\ell (x^{-\delta }u)\in L^2(\Dm_H,\d V_H), \ m+\ell \leq k\big\};
   \end{aligned}
    \end{equation}
    for $s\geq 0$, $x^\delta H^s_0(\Dm_H,\d V_H)$ is defined by interpolation.
    Since we have $(I_0^H)^\sharp I_0^H=x(I_0^Hx^2)^*(I_0^Hx^2)x^{-2}$,  \eqref{eq:edge_mapping} specializes for $s=0$ and $\delta'\in (-{5}/{2},-{3}/{2})$ to
    \begin{equation}\label{eq:edge_mapping_2}
	C_1\|f\|_{x^{\delta'} L^2(\Dm_H,\d V_H)}\leq \|(I_0^Hx^2)^*(I_0^Hx^2) f\|_{x^{\delta'+1} H_0^{1}(\Dm_H,\d V_H)}
	\leq C_2\|f\|_{x^{\delta'} L^2(\Dm_H,\d V_H)}.
    \end{equation}
    Thus, \eqref{eq:stability_continuity} can be viewed as an extension of \eqref{eq:edge_mapping_2} to the critical weight $\delta'=-3/2$.
\end{remark}

Theorem \ref{thm:isomorphismH} implies the following important result.

\begin{corollary}\label{cor:adj_onto}
    The map
    \begin{equation*}
	(I_0^H)^\sharp \colon \muh^2 \Calp (\Ghbar) \to x \Cev(\Dm_H)
    \end{equation*}
    is onto.
\end{corollary}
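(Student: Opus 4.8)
The plan is to reduce the claimed surjectivity to the isomorphism property of the normal operator established in Theorem~\ref{thm:isomorphismH}, specialized to $\gamma=0$. First I would invoke the adjoint relation of Theorem~\ref{thm:bounded} at $\gamma=0$, namely $(I_0^H x^{2})^* = x^{-1}(I_0^H)^\sharp$, which rearranges to the factorization $(I_0^H)^\sharp = x\,(I_0^H x^{2})^*$. Consequently, for a prescribed target written as $h = xf$ with $f\in\Cev(\Dm_H)$, solving $(I_0^H)^\sharp g = h$ with $g\in\muh^2\Calp(\Ghbar)$ is equivalent to solving $(I_0^H x^{2})^* g = f$ in that same space. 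This recasts the corollary as the surjectivity of $(I_0^H x^{2})^*$ restricted to $\muh^2\Calp(\Ghbar)$ onto $\Cev(\Dm_H)$.

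Next, given $f\in\Cev(\Dm_H)$, I would apply part~(\ref{item:isomorphismH3}) of Theorem~\ref{thm:isomorphismH} with $\gamma=0$, which asserts that $(I_0^H x^{2})^* I_0^H x^{2}$ is an isomorphism of $\Cev(\Dm_H)$, to produce a (unique) $u\in\Cev(\Dm_H)$ with $(I_0^H x^{2})^* I_0^H x^{2} u = f$. Setting $g := I_0^H x^{2} u$, the forward smooth mapping property of Theorem~\ref{thm:smoothmapping} at $\gamma=0$ guarantees $g\in\muh^2\Calp(\Ghbar)$, so $g$ indeed lies in the prescribed domain. By construction $(I_0^H x^{2})^* g = f$, whence $(I_0^H)^\sharp g = x\,(I_0^H x^{2})^* g = xf = h$, which establishes surjectivity onto $x\Cev(\Dm_H)$. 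The preimage is thus produced explicitly, realized through the range of the forward transform itself.

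The main point is conceptual rather than computational: all three ingredients—the adjoint factorization, the isomorphism, and the forward mapping—must be read off in the Fr\'echet (smooth) category rather than on the $L^2$ completions, so that the constructed $g=I_0^H x^{2}u$ genuinely belongs to $\muh^2\Calp(\Ghbar)$ and not merely to the Hilbert space $L^2_+(\Gh,\d\beta\d a)$. Since Theorems~\ref{thm:isomorphismH} and~\ref{thm:smoothmapping} are already formulated at the level of these smooth spaces, the argument closes with no further analytic input; the only care needed is to track that $h\in x\Cev(\Dm_H)$ corresponds under the factorization to $f\in\Cev(\Dm_H)$, which is immediate since multiplication by $x$ is a bijection between these two spaces.
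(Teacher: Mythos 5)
Your proof is correct and takes essentially the same route as the paper's: both reduce to the $\gamma=0$ case of Theorem~\ref{thm:isomorphismH}\ref{item:isomorphismH3}, solve $(I_0^H x^2)^* I_0^H x^2 u = f$ for $u\in\Cev(\Dm_H)$, exhibit the explicit preimage $g = I_0^H x^2 u$, and invoke Theorem~\ref{thm:smoothmapping} to place $g$ in $\muh^2\Calp(\Ghbar)$, together with the adjoint identity $(I_0^H)^\sharp = x\,(I_0^H x^2)^*$ from Theorem~\ref{thm:bounded}. (The paper additionally sketches an alternative argument via Theorem~\ref{thm:interadj} and the surjectivity of the Euclidean backprojection, but its primary proof coincides with yours.)
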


\subsection{Range characterization and moment conditions.} As a corollary of Theorem \ref{thm:SVDH} and Theorem \ref{thm:isomorphismH}\ref{item:isomorphismH2}, we immediately state without proof.
\begin{corollary}[Range characterization] \label{prop:rangeSobolevH}
    Fix $\gamma>-1$. For any $s\ge 0$, we have
    \begin{equation}
	I_0^H x^{2+2\gamma} \left( H_w^{s,\gamma}(\Dm_H) \right) = \left\{ \sum_{n\ge 0} \sum_{k=0}^n u_{n,k}\ \sigma_{n,k}^\gamma \psi_{n,k}^{\gamma,H},\ \sum_{n,k} (n+1+\gamma)^{2s} |u_{n,k}|^2 <\infty\right\}.
	\label{eq:rangeSobolevH}
    \end{equation}
    Moreover,
    \begin{equation}
    \begin{aligned}
	&I_0^H x^{2+2\gamma} \left( \Cev(\Dm_H) \right)  \\
   &= \left\{ \sum_{n\ge 0} \sum_{k=0}^n u_{n,k}\ \sigma_{n,k}^\gamma \psi_{n,k}^{\gamma,H},\ \sum_{n,k} (n+1+\gamma)^{2s} |u_{n,k}|^2 <\infty \quad \forall s\in \Nm_0 \right\}.\quad
	\label{eq:rangeSmooth}
   \end{aligned}
    \end{equation}
\end{corollary}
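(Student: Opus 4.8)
The plan is to read off both identities directly from the singular value decomposition in Theorem \ref{thm:SVDH}(b), which reduces the range description to a bookkeeping statement about weighted $\ell^2$ sequences. Recall from that theorem that $\{\widehat{\Phi^* Z_{n,k}^\gamma}\}_{n\ge 0,\ 0\le k\le n}$ is a Hilbert basis of $L^2(\Dm_H, x^{2\gamma+3}\d V_H)$ (the operator is injective, its cokernel living on the data side through the indices $k<0$ or $k>n$), that $\{\psi_{n,k}^{\gamma,H}\}$ is a Hilbert basis of $L^2_+(\Gh,\muh^{-2\gamma}\d\beta\d a)$, and that the operator acts by
\begin{equation}
    I_0^H x^{2+2\gamma}\,\widehat{\Phi^* Z_{n,k}^\gamma} = \sigma_{n,k}^\gamma\,\psi_{n,k}^{\gamma,H}, \qquad n\ge 0,\ 0\le k\le n,
\end{equation}
with every $\sigma_{n,k}^\gamma>0$. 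The definition \eqref{eq:Hsnorm} of the norm on $H_w^{s,\gamma}(\Dm_H)$ then says precisely that $f=\sum_{n,k}u_{n,k}\widehat{\Phi^* Z_{n,k}^\gamma}$ lies in $H_w^{s,\gamma}(\Dm_H)$ if and only if $\sum_{n,k}(n+1+\gamma)^{2s}|u_{n,k}|^2<\infty$, the sum running over $n\ge 0$, $0\le k\le n$.

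For \eqref{eq:rangeSobolevH} I would argue both inclusions. For the forward inclusion, given $f\in H_w^{s,\gamma}(\Dm_H)$ with $s\ge 0$, note $f\in L^2(\Dm_H,x^{2\gamma+3}\d V_H)$, so by the boundedness in Theorem \ref{thm:bounded} the operator may be applied term by term to the expansion of $f$, yielding $I_0^H x^{2+2\gamma} f=\sum_{n,k}u_{n,k}\sigma_{n,k}^\gamma\psi_{n,k}^{\gamma,H}$ with convergence in $L^2_+(\Gh,\muh^{-2\gamma}\d\beta\d a)$; the coefficient sequence satisfies the summability condition by hypothesis. For the reverse inclusion, given any $g=\sum_{n,k}u_{n,k}\sigma_{n,k}^\gamma\psi_{n,k}^{\gamma,H}$ with $\sum_{n,k}(n+1+\gamma)^{2s}|u_{n,k}|^2<\infty$, I would set $f:=\sum_{n,k}u_{n,k}\widehat{\Phi^* Z_{n,k}^\gamma}$, which lies in $H_w^{s,\gamma}(\Dm_H)$ by the same condition and satisfies $I_0^H x^{2+2\gamma}f=g$.

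For the smooth statement \eqref{eq:rangeSmooth} I would invoke Theorem \ref{thm:isomorphismH}\ref{item:isomorphismH2}, which identifies $\Cev(\Dm_H)=\cap_{s\ge 0}H_w^{s,\gamma}(\Dm_H)$. In coefficient terms this means $f\in\Cev(\Dm_H)$ exactly when $\sum_{n,k}(n+1+\gamma)^{2s}|u_{n,k}|^2<\infty$ for every $s\in\Nm_0$; applying the operator termwise as above produces precisely the set on the right-hand side of \eqref{eq:rangeSmooth}, and conversely any such sequence is realized by an $f\in\Cev(\Dm_H)$. The only point requiring a line of care is the interchange of $I_0^H x^{2+2\gamma}$ with the infinite sums, which is justified by the $L^2$-continuity from Theorem \ref{thm:bounded} together with the fact that partial sums converge in $L^2(\Dm_H,x^{2\gamma+3}\d V_H)$; since the operator is fully diagonalized by the two Hilbert bases there is no genuine obstacle, which is exactly why the corollary can be stated without proof.
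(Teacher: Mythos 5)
Your proposal is correct and follows exactly the route the paper intends: the paper states this corollary ``without proof'' as an immediate consequence of Theorem \ref{thm:SVDH} and Theorem \ref{thm:isomorphismH}\ref{item:isomorphismH2}, which is precisely the diagonalization-plus-intersection argument you spell out, including the coefficient bookkeeping via \eqref{eq:Hsnorm} and the identification $\Cev(\Dm_H)=\cap_{s\ge 0}H_w^{s,\gamma}(\Dm_H)$. Your added care about termwise application of the operator (justified by $L^2$-boundedness from Theorem \ref{thm:bounded}) is exactly the detail the paper leaves implicit.
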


\begin{remark}
    The rapid decay condition in \eqref{eq:rangeSmooth} can also be formulated as follows: For each $N$ there exists $C_N$ such that
    \begin{equation}
	\max_{0\le k\le n}|u_{n,k}|\le C_N(1+n)^{-N}, \quad n\in \Nm_0.
    \end{equation}
\end{remark}

The above spectral characterization can also be understood in terms of {\em moment conditions}, most appropriately formulated in vertex coordinates, which are described in Section \ref{sec:vertex}. Given a function $u$ on $\Ghbar$, we denote by $u^{\rmv}$ the function $u$ pulled back to vertex coordinates. By virtue of Lemma \ref{lem:horo2vertex} below, this means
\begin{equation}
    u^\rmv(\omega,s) \coloneqq u\left(\omega + \frac{3\pi}{2} + 2\tan^{-1}s, \frac{-2s}{1-s^2}\right), \qquad (\omega,s)\in \add{\Rm/2\pi\Zm} \times (-1,1).
\end{equation}

\begin{definition}
    We say that a function $u$ on $\Gh$ satisfies the {\em hyperbolic moment conditions} if for any $m\in \Nm_0$ and any polynomial $p_m$ of degree $m$ and same parity as $m$ on $\Rm$, there exists a homogeneous polynomial $P_m$ of degree $m$ on $\Rm^2$ such that
    \begin{equation}
	\int_{-1}^1 p_m \left( \frac{-2s}{1+s^2} \right) u^{\rmv} (\omega,s) \frac{2\d s}{1+s^2} = P_m(\cos(\omega),\sin(\omega)).
	\label{eq:remoments}
    \end{equation}
\end{definition}

\begin{remark}\label{rem:moments}
    Classically, moment conditions are only stated for the more specific case $p_m (x) = x^m$ (see e.g., \cite{Berenstein1993a}), $m\ge 0$. That they are equivalent follows from the fact that a homogeneous polynomial $P_m$ can  also be seen as a homogeneous polynomial of degree $P_{m+2\ell}$ for any $\ell\ge 0$ upon multiplying it by $(\cos^2\omega + \sin^2\omega)^\ell$. More generally, \eqref{eq:remoments} holds for any polynomial $p_m$ as soon as it holds for a sequence of polynomials $\{q_m\}_{m\in \Nm_0}$ where, for any $m\in \Nm_0$, $q_m$ has degree exactly $m$ and the same parity as $m$.

    We explain in Appendix \ref{sec:HMC} how conditions \eqref{eq:remoments} are the same as those formulated in \cite{Berenstein1993a}.
\end{remark}

\begin{theorem}\label{thm:moments} Fix $\gamma>-1$, $s\ge 0$, and let $u\in L^2_+ (\Gh, \muh^{-2\gamma}\d\beta\d a)$. Then, $u= I_0^H x^{2\gamma+2} f$ for some $f\in H_w^{s,\gamma}(\Dm_H)$, resp. $f\in \Cev(\Dm_H)$, if and only if:
    \begin{itemize}
	\item[(i)] $u$ satisfies the hyperbolic moment conditions \eqref{eq:remoments}, and
	\item[(ii)] letting $P_m(\cos\omega,\sin\omega)$ be as in \eqref{eq:remoments}, relative to the polynomials $\{p_m^\gamma\}_{m\ge 0}$ defined in \eqref{eq:psinkgamma}, and decomposing $P_m(\cos\omega,\sin\omega) = \sum_{k = 0}^m M_{m,k} e^{i(m-2k)\omega}$, we have the condition
	    \begin{equation}
		\sum_{m\ge 0} \sum_{k=0}^m \frac{(m+1+\gamma)^{2s}}{(\sigma_{m,k}^\gamma)^2} |M_{m,k}|^2 <\infty,
		\label{eq:Hscond}
	\end{equation}
	resp., for each $N$ there exists $C_N$ such that
	\begin{equation}
	    \max_{0\le k\le m}|M_{m,k}|\le C_N(1+m)^{-N}, \quad m\in \Nm_0.
	    \label{eq:Cinfcond}
	\end{equation}
    \end{itemize}
\end{theorem}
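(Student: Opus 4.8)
The plan is to deduce everything from the spectral range characterization of Corollary \ref{prop:rangeSobolevH}, translating the abstract condition ``$u$ is supported on the modes $0\le k\le n$ with controlled coefficients'' into the concrete moment conditions. The bridge is a single computation expressing the moment integrals \eqref{eq:remoments} in terms of the expansion coefficients of $u$ in the Hilbert basis $\{\psi_{n,k}^{\gamma,H}\}$. Since $u\in L^2_+(\Gh,\muh^{-2\gamma}\d\beta\d a)$ is given, I would write $u=\sum_{n\ge 0}\sum_{k\in\Zm}c_{n,k}\psi_{n,k}^{\gamma,H}$ with $(c_{n,k})\in\ell^2$, and by Remark \ref{rem:moments} reduce to testing the moment conditions against the single sequence $p_m=p_m^\gamma$ of \eqref{eq:psinkgamma}, which has degree $m$ and parity $m$. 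The key preliminary step is to rewrite $\psi_{n,k}^{\gamma,H}$ in the vertex coordinates $(\omega,s)$ of Lemma \ref{lem:horo2vertex}: using \eqref{eq:psinkgammaH}, \eqref{eq:psinkgamma} together with $\tan^{-1}a=-2\tan^{-1}s$, $\sin(\tan^{-1}a)=-2s/(1+s^2)$ and $\muh=\cos(\tan^{-1}a)=(1-s^2)/(1+s^2)$, the phase collapses, $\beta+\tan^{-1}a+\pi/2\equiv\omega\pmod{2\pi}$, giving
\[
(\psi_{n,k}^{\gamma,H})^\rmv(\omega,s)=\Big(\tfrac{1-s^2}{1+s^2}\Big)^{2\gamma+2}e^{i(n-2k)\omega}\,p_n^\gamma\Big(\tfrac{-2s}{1+s^2}\Big).
\]

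Next I would compute the moment integral term by term. Substituting $\eta=-2s/(1+s^2)=\sin\alpha$ with $\alpha=-2\tan^{-1}s$ converts the combined measure $\muh^{2\gamma+2}\,\tfrac{2\d s}{1+s^2}$ into precisely the Jacobi weight $(1-\eta^2)^{\gamma+1/2}\d\eta$, so by the normalization of $p_m^\gamma$,
\[
\int_{-1}^1 p_m^\gamma\Big(\tfrac{-2s}{1+s^2}\Big)(\psi_{n,k}^{\gamma,H})^\rmv(\omega,s)\,\tfrac{2\d s}{1+s^2}=\frac{\delta_{mn}}{2\pi}\,e^{i(m-2k)\omega}.
\]
Summing over the expansion of $u$ (the interchange being justified in $L^2(\Sm^1_\omega)$ since for fixed $m$ the sequence $(c_{m,k})_k$ is $\ell^2$) shows the $m$-th moment equals $\frac{1}{2\pi}\sum_{k\in\Zm}c_{m,k}e^{i(m-2k)\omega}$. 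Requiring this to be a homogeneous polynomial $P_m(\cos\omega,\sin\omega)=\sum_{k=0}^m M_{m,k}e^{i(m-2k)\omega}$, whose only frequencies are $m-2k$ with $0\le k\le m$, is therefore equivalent to the vanishing $c_{m,k}=0$ for $k<0$ or $k>m$, together with the identification $c_{m,k}=2\pi M_{m,k}$ for $0\le k\le m$. By Theorem \ref{thm:SVDH}(a), the vanishing is exactly the assertion $u\perp\ker(I_0^H x^{2+2\gamma})^*$, i.e.\ $u$ lies in the closed range; thus condition (i) holds iff $u=\sum_{n\ge 0}\sum_{k=0}^n u_{n,k}\sigma_{n,k}^\gamma\psi_{n,k}^{\gamma,H}$ with $u_{n,k}=c_{n,k}/\sigma_{n,k}^\gamma=2\pi M_{n,k}/\sigma_{n,k}^\gamma$.

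Finally I would feed this into Corollary \ref{prop:rangeSobolevH}. With $u_{n,k}=2\pi M_{n,k}/\sigma_{n,k}^\gamma$, the criterion $\sum(n+1+\gamma)^{2s}|u_{n,k}|^2<\infty$ characterizing $f\in H_w^{s,\gamma}(\Dm_H)$ becomes exactly \eqref{eq:Hscond} up to the harmless factor $(2\pi)^2$, giving the Sobolev statement in both directions. For the smooth case, combining \eqref{eq:rangeSmooth} with Theorem \ref{thm:isomorphismH}\ref{item:isomorphismH2} ($\cap_{s}H_w^{s,\gamma}=\Cev(\Dm_H)$) shows $f\in\Cev(\Dm_H)$ iff the $u_{n,k}$ decay rapidly in $n$, which under the same identification is \eqref{eq:Cinfcond}.

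I expect the main obstacle to be making the moment functional rigorous for a merely $L^2$ integrand: the slice integral in \eqref{eq:remoments} is a priori defined only after pairing in $\omega$, so care is needed to justify that evaluating it term by term against $p_m^\gamma$ yields the stated identity in $L^2(\Sm^1_\omega)$, and that reading ``equals a homogeneous polynomial'' in $L^2(\Sm^1_\omega)$ is consistent with the pointwise formulation. The remaining vertex-coordinate bookkeeping—the phase collapse and the precise power of $\muh$ that produces the Jacobi orthogonality weight—is a direct but essential computation.
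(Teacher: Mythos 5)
Your proposal is correct and takes essentially the same route as the paper's proof: both reduce via Remark \ref{rem:moments} to testing against the sequence $\{p_m^\gamma\}$, use the vertex-coordinate expression of $\psi_{n,k}^{\gamma,H}$ and the change of variables turning $\muh^{2\gamma+2}\,\tfrac{2\d s}{1+s^2}$ into the Jacobi weight to identify the $m$-th moment's Fourier coefficients $M_{m,k}$ with the coefficients of $u$ in the basis $\{\psi_{n,k}^{\gamma,H}\}$ up to the factor $2\pi$, and then conclude from Theorem \ref{thm:SVDH}(a) and Corollary \ref{prop:rangeSobolevH}. The only organizational difference is that the paper recognizes the double integral directly as the inner product $\tfrac{1}{2\pi}\langle u,\psi_{n,k}^{\gamma,H}\rangle_{L^2(\muh^{-2\gamma}\d\beta\d a)}$, whereas you expand $u$ first and integrate term by term using Jacobi orthogonality, which is the same computation organized dually.
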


Theorem \ref{thm:moments} is the extension to integrands with non-trivial boundary behavior of the characterization appearing in \cite[Theorem 3.10]{Berenstein1993a} in the case of compactly supported and Schwartz class integrands.

\subsection{The case \texorpdfstring{$\gamma=0$}{gamma=0}. Boundary operators and range characterizations.}  More can be said about Corollary \ref{prop:rangeSobolevH} in the case $\gamma = 0$: First, the spectral decay can be expressed in terms of natural anisotropic Sobolev scales; second, the infinite-dimensional co-kernel can be understood in terms of boundary operators, akin to those defined in \cite{Pestov2004,Monard2015a} in the context of simple Riemannian surfaces.

\smallskip

\noindent\textbf{Sobolev spaces on $\Ghbar$.} Upon defining $\Cal(\Ghbar)\coloneqq \Psihf^* C_\alpha^\infty(\partial_+ S\Dm_E)$, where the space $C_\alpha^\infty(\partial_+ S\Dm_E)$ is defined in \eqref{eq:CalphaE}, the spaces $\muh^2 \Cal(\Ghbar)$ and $\muh \Cal(\Ghbar)$ are two distinct core domains of $L^2 (\Ghbar, \d\beta\d a)$-essential-self-adjointness for $\T_0^H$. Accordingly, we denote by $\T_{0,D}^H$ (resp. $\T_{0,N}^H$) the graph closure of $(\T_0, \muh^2 \Cal(\Ghbar))$ (resp., of $(\T_0, \muh \Cal(\Ghbar))$), thought of as a Dirichlet (resp. Neumann) realization of $\T_0^H$ based on the Taylor expansions of elements in $\muh^2 \Cal(\Ghbar)$ (resp. $\muh \Cal(\Ghbar)$). The involution $\S_A^H$ induces the even/odd decompositions $L^2 = L^2_+ \stackrel{\perp}{\oplus} L^2_-$ and $\Cal = \Calp \oplus \Calm$, with projection maps $(id \pm (\S_A^H)^*)/2$. Moreover, since $\muh = \muh\circ\S_A^H$, we also have $\muh\Cal = \muh\Calp \oplus \muh\Calm$. Then, smoothness scales that  respect the symmetries and the boundary behavior of $I_0^H x^2$ will arise by defining, for $s\ge 0$,
\begin{equation}
\begin{aligned}
	&H_{T,D,\pm}^s (\Ghbar) \coloneqq (\T_{0,D}^H)^{-s/2} \left( L^2_\pm (\Ghbar, \d\beta\d a) \right), \\
	&H_{T,N,\pm}^s (\Ghbar) \coloneqq (\T_{0,N}^H)^{-s/2} \left( L^2_\pm (\Ghbar, \d\beta\d a) \right).
    \label{eq:SobTH}
    \end{aligned}
\end{equation}
Then, observing that $\T_0^H$ commutes with the projections $(id \pm (\S_A^H)^*)/2$, one can obtain the decompositions $\D((\T_{0,\bullet}^H)^{s/2}) = H_{T,\bullet,+}^s (\Ghbar) \oplus H_{T,\bullet,-}^s (\Ghbar)$ for $\bullet\in \{D,N\}$.

In Section \ref{sec:boundary_ops}, we define two ``boundary'' operators
\begin{equation}\label{eq:CPCinf}
    C_-^H \colon \muh^2 \Calp(\Ghbar) \to \muh^2 \Calp(\Ghbar), \qquad P_-^H \colon \muh \Calm(\Ghbar) \to \muh^2 \Calp(\Ghbar)
\end{equation}
which, for every $s\ge 0$, extend to bounded operators
\begin{equation}
	C_-^H \colon H_{T,D,+}^s (\Ghbar) \to H_{T,D,+}^s (\Ghbar), \qquad P_-^H \colon H_{T,N,-}^s (\Ghbar) \to H_{T,D,+}^s(\Ghbar),
	\label{eq:CP}
\end{equation}
and help characterize the range of $I_0^H x^2$ as follows.

\begin{theorem}\label{thm:gammazeroHalt}
    Let $u\in \muh^2 \Calp(\Ghbar)$ (resp. $H_{T,D,+}^{s+1/2}(\Ghbar)$ for $s\ge 0$ fixed).  The following are equivalent:
    \begin{enumerate}[(a)]
	\item \label{gammazeroHit0alt}There exists $f\in \Cev (\Dm_H)$ (resp. $\wtH^{s,0}_w (\Dm_H)$) such that $u = I_0^H x^2 f$;
	\item \label{gammazeroHit1alt} $C_-^H u = 0$;
	\item \label{gammazeroHit2alt} $u = P_-^H w$ for some $w\in \muh \Calm(\Ghbar)$ (resp. $w\in H_{T,N,-}^{s+1/2} (\Ghbar)$);
	\item \label{gammazeroHit3alt} $u$ satisfies the hyperbolic moment conditions \eqref{eq:remoments} (case $\gamma=0$) with the condition \eqref{eq:Cinfcond} (resp. \eqref{eq:Hscond}).
    \end{enumerate}
\end{theorem}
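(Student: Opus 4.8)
The plan is to transport the whole equivalence to the Euclidean disk, where the corresponding range characterization (the analogue of \cite[Thm 4.4.(ii)]{Pestov2004}, refined to Sobolev scales in Section~\ref{sec:gammazero}) is available, and then pull everything back through the projective equivalence. The dictionary is furnished by $\Phi$, $\Psihf$ and the weight $\muh$: on the disk $\Phi^*$ carries the Euclidean Zernike basis to the hyperbolic singular functions $\Phi^* Z_{n,k}^0$, while on the data space the map $u\mapsto \Psihf^{-*}(\muh^{-1}u)$ sends $\psi_{n,k}^{0,H}$ to $\psi_{n,k}^0$; by Theorem~\ref{thm:SVDH} this intertwines $I_0^H x^2$ with the weighted Euclidean transform ($\gamma=0$) up to the explicit factors $\sigma_{n,k}^0$. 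By their construction in Section~\ref{sec:boundary_ops}, $C_-^H$ and $P_-^H$ are exactly the conjugates of the Euclidean $C_-$, $P_-$ under this same dictionary. The equivalence (a)$\Leftrightarrow$(d) is then simply Theorem~\ref{thm:moments} specialized to $\gamma=0$, so the genuinely new content lies in (a)$\Leftrightarrow$(b)$\Leftrightarrow$(c).

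For the latter I would run the cycle (a)$\Rightarrow$(b)$\Rightarrow$(c)$\Rightarrow$(a) built on the three Euclidean inclusions $\operatorname{ran}I_0^E\subseteq\ker C_-$, $\ker C_-\subseteq\operatorname{ran}P_-$, and $\operatorname{ran}P_-\subseteq\operatorname{ran}I_0^E$, whose chaining yields $\operatorname{ran}I_0^E=\ker C_-=\operatorname{ran}P_-$. For (a)$\Rightarrow$(b): if $u=I_0^H x^2 f$ then $\Psihf^{-*}(\muh^{-1}u)$ lies in the closed span of $\{\psi_{n,k}^0:0\le k\le n\}=\operatorname{ran}I_0^E\subseteq\ker C_-$, and the conjugation relation returns $C_-^H u=0$. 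For (b)$\Rightarrow$(c): transporting $C_-^H u=0$ gives $C_-(\Psihf^{-*}(\muh^{-1}u))=0$, so by $\ker C_-=\operatorname{ran}P_-$ there is a Euclidean odd preimage which, pulled back through $\muh\Psihf^*$, produces $w\in\muh\Calm(\Ghbar)$ (resp.\ $H^{s+1/2}_{T,N,-}(\Ghbar)$) with $u=P_-^H w$. For (c)$\Rightarrow$(a): since $\operatorname{ran}P_-\subseteq\operatorname{ran}I_0^E$, the image $\Psihf^{-*}(\muh^{-1}u)$ is a Euclidean transform, and applying $\Phi^*$ together with Theorem~\ref{thm:SVDH} yields the integrand $f\in\Cev(\Dm_H)$ (resp.\ $\wtH^{s,0}_w(\Dm_H)$) with $u=I_0^H x^2 f$.

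The Sobolev bookkeeping is where care is required. The half-derivative shift between $f\in\wtH^{s,0}_w(\Dm_H)$ and $u\in H^{s+1/2}_{T,D,+}(\Ghbar)$ is forced by the SVD: since $(\sigma_{n,k}^0)^2=4\pi/(n+1)$ while $\T_0^H$ has eigenvalue $(n+1)^2$ on $\psi_{n,k}^{0,H}$, the definitions \eqref{eq:Hsnorm} and \eqref{eq:SobTH} make $I_0^H x^2$ an order $-1/2$ isomorphism (with equivalent norms) of $\wtH^{s,0}_w(\Dm_H)$ onto its range, which sits in the even/Dirichlet scale because the $\psi_{n,k}^{0,H}$ carry the even Taylor structure of the $\muh^2\Calp(\Ghbar)$ core of $\T_{0,D}^H$. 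The preimages $w$ live in the odd/Neumann scale built on the $\muh\Calm(\Ghbar)$ core of $\T_{0,N}^H$, matching that $P_-$ maps odd to even data at order $0$; the boundedness of $C_-^H$, $P_-^H$ on all these scales is \eqref{eq:CP}. To make the transfer valid scale by scale one checks that $\muh\Psihf^*$ intertwines $\T_{0,D}^H$ and $\T_{0,N}^H$ with the corresponding Euclidean realizations of $\T_0$, which follows from the conjugation identity $\T_0^H=\muh\Psihf^*\T_0\Psihf^{-*}\muh^{-1}$ of \eqref{eq:TgH} and the spectral decomposition of Proposition~\ref{prop:TgH}.

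I expect the main obstacle to be exactly this matching of the two boundary realizations across the conjugation, together with the boundedness claims \eqref{eq:CP}: one must verify that $C_-^H$ and $P_-^H$, assembled from fiberwise Hilbert transforms and boundary traces, respect both the even/odd ($\pm$) splitting and the distinct $\muh^2$ versus $\muh$ Taylor structures underlying the Dirichlet and Neumann scales, and that the Sobolev refinement of \cite[Thm 4.4.(ii)]{Pestov2004} carried out in Section~\ref{sec:gammazero} genuinely identifies $\operatorname{ran}I_0^E=\ker C_-=\operatorname{ran}P_-$ at every order $s\ge0$ rather than merely in the smooth category.
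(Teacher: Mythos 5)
Your proposal is correct and follows essentially the same route as the paper's proof: both transport $u$ to the Euclidean disk via $\Psihf^{-*}\muh^{-1}$, invoke the Euclidean identities $\mathrm{ran}\, I_0^E = \ker C_- = \mathrm{ran}\, P_-$ (Theorems \ref{thm:rangegammazero} and \ref{thm:rangeP}, and \eqref{eq:PU_Smooth}), and pull everything back through the conjugation identities for the boundary operators (Proposition \ref{prop:intertwining_bd_ops}) together with Lemma \ref{lm:HTDNH} for the Sobolev scales, with (a)$\Leftrightarrow$(d) being Theorem \ref{thm:moments} at $\gamma=0$. The only cosmetic difference is that you chain a cycle of implications while the paper matches each of three Euclidean conditions to its hyperbolic counterpart; the obstacle you flag about matching the Dirichlet/Neumann realizations and the $\pm$ splitting across the conjugation is precisely what the paper disposes of via Lemma \ref{lm:HTDNH} and Proposition \ref{prop:intertwining_bd_ops}.
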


Aside from condition \ref{gammazeroHit3alt} discussed below Theorem \ref{thm:moments}, Theorem \ref{thm:gammazeroHalt} gives two more extensions of known results: Condition \ref{gammazeroHit2alt} is the analogue to Pestov-Uhlmann's range characterization of the X-ray transform on simple surfaces \cite[Thm 4.4.(ii)]{Pestov2004}, extended to the case of the hyperbolic space and to non-smooth classes, see also Theorem \ref{thm:rangeP} below; condition \ref{gammazeroHit1alt} is the generalization to hyperbolic space of \cite{Monard2015a}.

\smallskip

\noindent \textbf{Outline.} The remainder of the article is organized as follows.

We first discuss in Section \ref{sec:prelim} all building blocks needed for the main results. In Section \ref{sec:Eucl}, we discuss analogous results in the Euclidean disk, including a new Sobolev-space range characterization of the X-ray transform in Theorem \ref{thm:rangeP}. In Section \ref{ssec:models}, we discuss three parameterizations of hyperbolic geodesics (vertex, horocyclic, and cosphere bundle) and equivalences between them. We prove a Santal\'o-type formula (Proposition \ref{prop:SantaloH}) for the hyperbolic disk in Section \ref{sec:SantaloH}. Finally, we explicitly describe in Section \ref{sec:ProjEq} the projective equivalence between Euclidean and Poincar\'e disks, and the intertwining implications at the level of the X-ray transforms and backprojection operators (notably making using of Santal\'o's formula).

With these building blocks being set, Section \ref{sec:normalOps} provides brief proofs of the main results, Proposition \ref{prop:charac} through Theorem \ref{thm:moments}.

Finally, Section \ref{sec:boundary_ops} discusses the case $\gamma=0$ and the proof of Theorem \ref{thm:gammazeroHalt} (given in  Section\ \ref{ssec:gammazeroH}), discussing along the way a hyperbolic generalization of the incoming/outgoing boundaries of the unit tangent bundle of a simple surface (Section\ \ref{ssec:scattering_map}), boundary operators $C_{\pm}^H, P_{\pm}^H$ helping characterize the range of $I_0^H x^2$ (Section\ \ref{ssec:boundaryOps}), and an ambient description of the space $C_{\alpha,+}^\infty(\Ghbar)$ obtained through an appropriate gluing of the incoming and outgoing boundaries, usually kept disconnected in prior literature (see, e.g., \cite{Graham2019}), in Section\ \ref{ssec:smoothstruct}.

\section{Preliminaries} \label{sec:prelim}

\subsection{Known results on the Euclidean disk} \label{sec:Eucl}

The Euclidean disk $\Dm_E$ is equipped with its usual smooth structure, so that the function $d$ in \eqref{eq:d} is a smooth bdf. We parameterize geodesics through $\Dm_E$ via fan-beam coordinates (inward-pointing boundary): Given $e^{i\beta}\in \partial \Dm$ and an inward-pointing vector of direction $e^{i(\beta+\alpha+\pi)}$ for some $\alpha\in [-\pi/2,\pi/2]$, a unit-speed parameterization of the Euclidean geodesic passing through $(e^{i\beta}, e^{i(\beta+\pi+\alpha)})$ is given by
\begin{align}
\begin{split}
    \gamma_{\beta,\alpha}^E (u) &= e^{i\beta} + (u+\cos \alpha) e^{i(\beta+\pi+\alpha)} \\
	&= e^{i(\beta+\alpha+\pi)} (u + i\sin\alpha), \quad u\in [-\cos\alpha, \cos\alpha],
\end{split}
\label{eq:Egeo}\index{$ \gamma_{\beta,\alpha}^E$}
\end{align}
with endpoints $\gamma_{\beta,\alpha}^E (-\cos\alpha) = e^{i\beta}$ and $\gamma_{\beta,\alpha}^E (\cos\alpha) = e^{i (\beta+\pi+2\alpha)}$, attaining its vertex\footnote{The point closest to the origin.} when $u=0$. Then, the {\bf Euclidean X-ray transform} is defined by
\begin{equation}
    I_0^E f(\beta,\alpha) = \int_{-\cos\alpha}^{\cos\alpha} f(e^{i(\beta+\alpha+\pi)} (u + i\sin\alpha))\ du, \qquad (\beta,\alpha)\in \partial_+ S\Dm_E.
    \label{eq:EI0}
\end{equation}

\subsubsection{The space \texorpdfstring{$\Calp(\partial_+ S\Dm_E)$}{C alpha inifnity +}}

Recall that the {\bf scattering relation} is the map
\begin{equation}
    \mathcal{S}^E:\partial_\pm S\Dm_E\to \partial_\mp S\Dm_E,\quad (z,w)\mapsto \varphi_{\tau(z,\pm w)}(z,w),
\end{equation}
where $\tau(z,w)$ denotes the exit time for the orbit of the geodesic flow with initial data $(z,w) $ \add{and $\partial_- S\Dm_E=(\Rm/2\pi \mathbb{Z})_{\beta}\times [\pi/2,3\pi/2]_\alpha$ is the outward pointing boundary expressed in fan-beam coordinates, in which $(z,w) = (e^{i\beta}, e^{i(\beta+\pi+\alpha)})$. The scattering relation is expressed in fan beam coordinates as}
\begin{equation}
    \quad \S^E (\beta,\alpha) = (\beta+\pi+2\alpha, \pi-\alpha).
    \label{eq:EscatRel}
\end{equation}
Such a relation allows to extend a function on $\partial_+ S\Dm_E$ to $\partial S\Dm_E$ by evenness or oddness with respect to $\S^E$ through the operators
\begin{equation}
    A_\pm u (z,w) \coloneqq \left\{
    \begin{array}{rl}
	u(z,w), &\qquad (z,w)\in \partial_+ S\Dm_E \\
	\pm u(\S^E(z,w)), &\qquad (z,w)\in \partial_- S\Dm_E.
    \end{array}
    \right.
    \label{eq:ApmE}
\end{equation}
As mentioned in \cite[Section 4.3]{Monard2015a}, $A_\pm$ is bounded when viewed as an operator $L^2( \partial_+ S\Dm_E,\d\beta\d \alpha)\to L^2( \partial S\Dm_E,\d\beta\d \alpha) $ and as an operator $L^2( \partial_+ S\Dm_E,\mu \d\beta\d \alpha)\to L^2( \partial S\Dm_E,|\mu|\d\beta\d \alpha) $, whose adjoint in either functional setting is given by
\begin{equation}\label{eq:A_star}
    A_\pm ^*u (z,w)=u(z,w)\pm u(\mathcal{S}^E(z,w)), \quad (z,w)\in \partial _+ S\Dm_E.
\end{equation}
If $u\in C^\infty(\partial_+ S\Dm_E)$, then $A_{\pm} u$ is smooth away from $\partial_0 S\Dm_E \add{\coloneqq\partial_- S\Dm_E\cap \partial_+ S\Dm_E}$, though making it smooth on $\partial S\Dm_E$ requires constraining its Taylor expansion off of $\partial_0 S\Dm_E$. Related to this issue, an important subspace of $C^\infty (\partial_+ S\Dm_E)$ is defined (generally on a simple Riemannian surface) in \cite{Pestov2005} as
\begin{equation}
    \Cal(\partial_+ S\Dm_E) \coloneqq \{ u\in C^\infty(\partial_+ S\Dm_E), \ A_+ u\in C^\infty(\partial_+ S\Dm_E) \}.
    \label{eq:CalphaE}
\end{equation}
In particular, functions in $\Cal(\partial_+ S\Dm_E)$ can also be thought of as restrictions of functions that are smooth on $\partial S\Dm_E$ and invariant under $\S^E$. We now explain how this space helps capture the mapping properties of $I_0^E$. Since $I_0^E$ generally does not depend on geodesic orientation, we encode this further symmetry by defining
\begin{equation}
    \Calp(\partial_+ S\Dm_E) \coloneqq \{ u\in C_{\alpha}^\infty(\partial_+ S\Dm_E),\ u\circ \S_A^E = u \},
    \label{eq:CalphappE}
\end{equation}
where $\S_A^E\colon (\beta,\alpha)\mapsto (\beta+\pi+2\alpha, -\alpha)$ denotes the composition of the scattering relation and the antipodal map
\begin{equation}
    A_E:S\Dm_E\to S\Dm_E, \qquad (z,w)\mapsto (z,-w),
    \label{eq:AE}
\end{equation}
and encodes the orientation-reversing map of geodesics.

We now claim that for any $\gamma>-1$,
\begin{equation}
    I_0^E d^{\gamma} (C^\infty(\Dm_E)) \subset \mu^{2\gamma+1} \Calp(\partial_+ S\Dm_E).
    \label{eq:mappingI0E}
\end{equation}
Indeed, notice that with $\gamma_{\beta,\alpha}^E (u)$ defined in \eqref{eq:Egeo}, we have $d(\gamma_{\beta,\alpha}^E(u)) = \cos^2\alpha - u^2$. Then for $f\in C^\infty(\Dm_E)$, changing variable $u = t(\cos \alpha)$ for $t\in [-1,1]$ in the integral \eqref{eq:EI0}, we obtain
\begin{equation}
    I_0^E d^\gamma f (\beta,\alpha) = \mu^{2\gamma+1} \int_{-1}^1 (1-t^2)^\gamma f\left( e^{i (\beta+\pi+\alpha)} (t \cos \alpha + i \sin \alpha)\right)\ dt.
\end{equation}
This last integral is seen to be well-defined and smooth on $\partial S\Dm_E$, invariant under both $\S^E$ and $\S_A^E$. This exactly means that its restriction to $\partial_+ S\Dm_E$ belongs to $\Calp(\partial_+ S\Dm_E)$.

\begin{remark}
    Similar statements hold true on convex, nontrapping Riemannian manifolds, see e.g. \cite[Proposition 6.13]{Monard2021} or \cite[Theorem 2.1]{Mazzeo2021}.
\end{remark}

\subsubsection{Singular value decomposition}

We follow notation from \cite{Mishra2022}, replacing $I_0$ there by $I_0^E$ here. For any $\gamma>-1$, defining
\begin{equation*}
   L_+^2 (\partial_+ S\Dm_E, \mu^{-2\gamma}\d\beta\d\alpha) \coloneqq L^2 (\partial_+ S\Dm_E, \mu^{-2\gamma}\d\beta\d\alpha) \cap \ker ((\S_A^E)^*-id),
\end{equation*}
the operator
\begin{equation}
    I_0^E d^\gamma \colon L^2 (\Dm_E, d^\gamma \d V_E) \longrightarrow L_+^2 (\partial_+ S\Dm_E, \mu^{-2\gamma}\d\beta\d\alpha)
    \label{eq:I0dgamma}
\end{equation}
is bounded, with Hilbert space adjoint $(I_0^E)^\sharp \mu^{-2\gamma-1}$, where $(I_0^E)^\sharp $ denotes the Euclidean backprojection operator.

\begin{theoremlit}[Theorem 1 in \cite{Mishra2022}]\label{thm:SVD}
    \emph{Fix $\gamma>-1$ and let $\psi_{n,k}^{\gamma}$, $Z_{n,k}^\gamma $, and $(\sigma_{n,k}^\gamma)^{2}$ be as in \eqref{eq:psinkgamma}, \eqref{eq:Zernike}, and \eqref{eq:signkgammaH}, respectively.}
    \begin{enumerate}[(i)]
	\item The operator $(I_0^E)^\sharp \mu^{-2\gamma-1}\colon L_+^2(\partial_+ S\Dm_E, \mu^{-2\gamma} \d \beta\d\alpha)\to L^2(\Dm_E, d^\gamma\d V_E)$ has kernel
	    \begin{equation}
		\ker ((I_0^E)^\sharp \mu^{-2\gamma-1}) = \mathrm{span} \left( \psi_{n,k}^\gamma,\quad n\ge 0,\ k<0 \text{ or } k>n \right).
		\label{eq:keradjoint}
	    \end{equation}
	    \item The singular value decomposition of
		\begin{equation}
		    I_0^E d^\gamma\colon L^2(\Dm_E, d^\gamma \d V_E) \to L_+^2(\partial_+ S\Dm_E, \mu^{-2\gamma}\d \beta\d\alpha)/\ker((I_0^E)^\sharp \mu^{-2\gamma-1})	
		\end{equation}
		\emph{is given by $\left(\widehat{Z_{n,k}^\gamma}, \psi_{n,k}^\gamma, \sigma_{n,k}^\gamma\right)_{n\ge 0,\ 0\le k\le n}$.}
	\end{enumerate}
\end{theoremlit}

\subsubsection{Intertwining differential operators, functional relations, and \texorpdfstring{$C^\infty(\Dm_E)$}{C infinity}\break-mapping properties} \label{sec:IDOs}

From \cite{Mishra2022}, we have, for any $\gamma>-1$,
\begin{eqnarray*}
    I_0^E d^\gamma \circ \L_\gamma &=& \T_\gamma \circ I_0^E d^\gamma \qquad \text{on} \quad C^\infty(\Dm_E),
     \\
    \L_\gamma \circ (I_0^E)^\sharp \mu^{-2\gamma-1} &=& (I_0^E)^\sharp \mu^{-2\gamma-1} \circ \T_\gamma \qquad \text{ on } \quad  C^\infty( (\partial_+ S\Dm_E)^{\circ}),
\end{eqnarray*}
where
\begin{align}
    \L_\gamma &\coloneqq -(1-\rho^2)^{-\gamma} \rho^{-1}\partial_\rho\left((1-\rho^2)^{\gamma+1} \rho\partial_\rho\right)-\rho^{-2}\partial _\omega^2+(1+\gamma^2)id,\label{L_gamma_Eucl} \\
    \T_\gamma &\coloneqq -\mu^{2\gamma} T \mu^{-2\gamma} T + \gamma^2 id, \qquad T\coloneqq \partial_\beta-\partial_\alpha.\label{eq:T_gamma_Eucl}
\end{align}
The Zernike polynomials $\{Z_{n,k}^\gamma\}_{n,k}$ make up the full eigenvalue decomposition of the $L^2_\gamma$-essentially-self-adjoint operator $(\L_\gamma, C^\infty(\Dm_E))$, whose graph closure we denote by $\L_{\gamma,C^\infty}$, as $\L_\gamma Z_{n,k}^\gamma = (n+1+\gamma)^2 Z_{n,k}^\gamma$ for all $n,k$. One may then define the scale of Hilbert spaces
\begin{equation}
\begin{aligned}
    \wtH^{s,\gamma}(\Dm_E) &\coloneqq \D (\L_{\gamma,C^\infty}^{s/2})\\
     &= \left\{u = \sum_{n\ge 0} \sum_{k=0}^n u_{n,k} \widehat{Z_{n,k}^\gamma}, \qquad \sum_{n,k} (n+1+\gamma)^{2s} |u_{n,k}|^2 <\infty\right\}.
    \label{eq:Hsgamma}\index{$\wtH^{s,\gamma}(\Dm)$}
\end{aligned}
\end{equation}
Combined with Theorem \ref{thm:SVD}, we obtain a description of the range of $I_0^E d^\gamma$ when defined on this special Sobolev scale:
\begin{equation}
    I_0^E d^\gamma \left( \wtH^{s,\gamma}(\Dm_E) \right) = \left\{ \sum_{n\ge 0} \sum_{k=0}^n u_{n,k}\ \sigma_{n,k}^\gamma \psi_{n,k}^\gamma, \qquad \sum_{n,k} (n+1+\gamma)^{2s} |u_{n,k}|^2 <\infty\right\}.
    \label{eq:rangeSobolev}
\end{equation}

Further, large-$n$ estimates on $\sigma_{n,k}^\gamma$ in \cite[Lemma 12]{Mishra2022} show that for $s\ge 0$,
\begin{equation}
    \wtH^{s+\max (1,1+\gamma), \gamma}(\Dm_E) \subset (I_0^E)^\sharp \mu^{-2\gamma-1} I_0^E d^\gamma(\wtH^{s,\gamma}(\Dm_E))\subset \wtH^{s+\min (1,1+\gamma), \gamma}(\Dm_E).
    \label{eq:SobolevMapping}
\end{equation}
In particular, the normal operator $(I_0^E)^\sharp \mu^{-2\gamma-1} I_0^E d^\gamma$ is a tame map with tame inverse of $C^\infty(\Dm_E)$, when the latter is equipped with the graded family of semi-norms $\{\|\cdot\|_{\wtH^{p,\gamma}}\}_{p\in \Nm_0}$.

\begin{theoremlit}[Theorem 2 in \cite{Mishra2022}]\label{thm:Cinf}
    \emph{Fix $\gamma>-1$. The operator $(I_0^E)^\sharp \mu^{-2\gamma-1} I_0^E d^\gamma$ is an isomorphism of $C^\infty(\Dm_E)$.}
\end{theoremlit}

\subsubsection{The special case \texorpdfstring{$\gamma=0$}{gamma = 0}} \label{sec:gammazero}

In this case, the structure of the singular values simplifies, as $\sigma_{n,k}^0 = \frac{\sqrt{4\pi}}{\sqrt{n+1}}$ does not depend on $k$, allowing to refine some range descriptions. Below, we recall the results from \cite{Monard2019a}, but also give a Sobolev refinement, in Theorem \ref{thm:rangeP} below, of \cite[Thm 4.4.(ii)]{Pestov2004}. This requires defining new Hilbert scales on $\partial_+ S\Dm_E$, which generalize those previously defined in \cite{Monard2019a}.

\smallskip

\noindent \textbf{Sobolev scales on  $\partial_+ S\Dm_E$.} From the mapping property (see \cite[eqs. (25-26)]{Mishra2022})
\begin{equation}
    T(\Cal(\partial_+ S\Dm_E)) \subset \mu \Cal(\partial_+ S\Dm_E), \qquad T (\mu \Cal(\partial_+ S\Dm_E)) \subset \Cal(\partial_+ S\Dm_E),
    \label{eq:Tmaps}
\end{equation}
the operator $\T_0 \coloneqq -T^2$ can be equipped with domain $\mu\Cal(\partial_+ S\Dm_E)$ or\break  $\Cal(\partial_+ S\Dm_E)$, and $\T_0$ is $L^2(\partial_+ S\Dm_E,\d\beta\d\alpha)$-symmetric, and, in fact, essentially self-adjoint with respect to either of them. Indeed, the case of $\mu\Cal(\partial_+ S\Dm_E)$ is covered in \cite[Theorem 7, case $\gamma=0$]{Mishra2022}, and the case of $\Cal(\partial_+ S\Dm_E)$ can be proved using similar arguments. From the looks of their Taylor expansions off of $\partial_0 S\Dm_E$, these domains can be thought of as Dirichlet and Neumann core domains, respectively. Thus, we denote by $\T_{0,D}, \T_{0,N}$ the $L^2(\partial_+ S\Dm_E,\d\beta\d\alpha)$-graph closure of $(\T_0, \mu \Cal(\partial_+ S\Dm_E))$ and $(\T_0, \Cal(\partial_+ S\Dm_E))$, respectively. The involution $\S_A^E$ induces the even/odd decompositions $L^2 = L^2_+ \stackrel{\perp}{\oplus} L^2_-$ and $\Cal = \Calp \oplus \Calm$, with projection maps $(id \pm (\S_A^E)^*)/2$. Since $\mu \circ \S_A^E = \mu$, we also have $\mu \Cal = \mu \Calp \oplus \mu \Calm$. This allows us to define the following anisotropic Dirichlet and Neumann Sobolev scales for $s\ge 0$:
\begin{equation}
    H_{T,\bullet,\pm}^s (\partial_+ S\Dm_E) \coloneqq \T_{0,\bullet}^{-s/2} (L^2_\pm (\partial_+ S\Dm_E,\d\beta\d\alpha)), \qquad \bullet \in \{D,N\},
    \label{eq:SobT}
\end{equation}
so that $\D(\T_{0,\bullet}^{s/2}) = H_{T,\bullet,+}^s (\partial_+ S\Dm_E) \oplus H_{T,\bullet,-}^s (\partial_+ S\Dm_E)$ for $\bullet\in \{D,N\}$. The spaces $H_{T,D,+}^s$ coincide with the spaces $H_{T,+}^s(\partial_+ SM)$ originally introduced in \cite{Monard2019a}.

Let us define, for $n\ge 0$ and $k\in \Zm$, the $L^2(\partial_+ S\Dm_E, \d\beta\d\alpha)$-normalized functions
\begin{equation}
    \begin{split}
	\psi_{n,k} \coloneqq \frac{(-1)^n}{2\pi} e^{i(n-2k)(\beta+\alpha)} (e^{i(n+1)\alpha} + (-1)^n e^{-i(n+1)\alpha}), \\[0.2cm]
	\phi_{n,k} \coloneqq \frac{(-1)^n}{2\pi} e^{i(n-2k)(\beta+\alpha)} (e^{i(n+1)\alpha} - (-1)^n e^{-i(n+1)\alpha}).	
    \end{split}
    \label{eq:phipsiE}
\end{equation}
Up to scalar multiplication, the functions $\psi_{n,k}$ coincide with $\psi_{n,k}^0$ defined in \eqref{eq:psinkgamma}, and $\psi_{n,k}$ (resp. $\phi_{n,k}$) corresponds to $u'_{p,q}$ (resp. $v'_{p,q}$) defined in \cite{Mishra2019} upon reindexing $(p,q)\mapsto (n = -p+2q, k = q-p)$. These families are Hilbert bases of $L^2_+(\partial S\Dm_E, \d\beta\d\alpha)$ and $L^2_-(\partial S\Dm_E, \d\beta\d\alpha)$, respectively, and from \cite[Proposition 3.1]{Mishra2019} we also have the characterizations
\begin{align}
	\mu \Calp(\partial_+ S\Dm_E) = \Bigg\{ \sum_{n\ge 0}\sum_{k\in \Zm} u_{n,k}\ \psi_{n,k} \Bigg\}, \quad \Calm (\partial_+ S\Dm_E) = \Bigg\{ \sum_{n\ge 0}\sum_{k\in \Zm} u_{n,k}\ \phi_{n,k} \Bigg\},
    \label{eq:spans}
\end{align}
where the expansions have rapid decay, in the sense that for all $a,b\in \Nm$ one has $\sup_{n,k}\{ |u_{n,k}| (n+1)^{2a} (n-2k)^{2b} \}<\infty$. Noticing that $\psi_{n,k}, \phi_{n,k}$ are eigenvectors of $-T^2$, each with corresponding eigenvalue $(n+1)^2$, we deduce the following characterizations:
\begin{align}
    \begin{split}
	H_{T,D,+}^s (\partial_+ S\Dm_E) &= \Bigg\{ \sum_{n\ge 0}\sum_{k\in \Zm} u_{n,k}\ \psi_{n,k}, \quad \sum_{n,k} (n+1)^{2s} |u_{n,k}|^2 <\infty  \Bigg\}, \\[0.2cm]
	H_{T,N,-}^s (\partial_+ S\Dm_E) &= \Bigg\{ \sum_{n\ge 0}\sum_{k\in \Zm} v_{n,k}\ \phi_{n,k}, \quad \sum_{n,k} (n+1)^{2s} |v_{n,k}|^2 <\infty  \Bigg\}.	
    \end{split}
    \label{eq:HTcharac}
\end{align}

\smallskip

\noindent \textbf{Boundary operators and range characterizations.} Out of the scattering relation and fiberwise Hilbert transform, one can construct a `boundary' operator
\begin{align*}
	C_- \colon \mu \Calp(\partial_+ S\Dm_E) \to \mu \Calp(\partial_+ S\Dm_E),
\end{align*}
that directly captures the infinite-dimensional co-kernel of $I_0^E$. Specifically,
\begin{equation}
    C_- \coloneqq \frac{1}{2} A_-^* H_- A_-,
    \label{eq:Cminus}
\end{equation}
where $A_-, A_-^*$ are defined in \eqref{eq:ApmE} and \eqref{eq:A_star} and $H$ is the odd part of the fiberwise Hilbert transform on $\partial S\Dm_E$, whose integral expression is given by \eqref{eq:Hminus}.
Explicitly, one finds that
\begin{equation}\label{eq:Cminus_spectral}
    C_-\psi_{n,k} = i (1_{k<0} - 1_{k>n}) \psi_{n,k}, \qquad n\ge 0,\ k\in \Zm;
\end{equation}
in particular, $C_- \colon H_{T,D,+}^s(\partial_+ S\Dm_E)\to H_{T,D,+}^s(\partial_+ S\Dm_E)$ is bounded for every $s\ge 0$.

\begin{theoremlit}[Theorem 6 in \cite{Monard2019a}, case $M = (\Dm_E,e)$] \label{thm:rangegammazero}
    \emph{We have}
    \begin{equation}
	\begin{split}
	    I_0^E (\wtH^{s,0}(\Dm_E)) &= H_{T,D,+}^{s+1/2}(\partial_+ S\Dm_E) \cap \ker C_-, \quad s\ge 0, \\[0.2cm]
	    I_0^E (C^\infty(\Dm_E)) &= \mu \Calp(\partial_+ S\Dm_E) \cap \ker C_-.
	\end{split}
	\label{eq:rangegammazero}
    \end{equation}	
   \emph{ Moreover, for all $s\ge 0$ and $f\in \wtH^{s,0}(\Dm_E)$, $\|f\|_{\wtH^{s,0}(\Dm_E)} = \frac{1}{\sqrt{4\pi}} \|I_0^E f\|_{H_{T,D,+}^{s+1/2}(\partial_+ S\Dm_E)}$.}
\end{theoremlit}

We now discuss a Sobolev scale generalization of the Pestov-Uhlmann range characterization \cite[Thm 4.4.(ii)]{Pestov2004} in terms of the boundary operator
\begin{equation}
    P_- = A_-^* H_- A_+,
    \label{eq:Pminus}
\end{equation}
defined analogously to \eqref{eq:Cminus}. In this case, we have that $P_- (\Calm(\partial_+ S\Dm_E)) \subset \mu \Calp (\partial_+ S\Dm_E)$,
$P_-$ extends by continuity as an operator $P_-\colon L^2_-(\partial_+ S\Dm_E)\to L^2_+ (\partial_+ S\Dm_E)$ and, reformulating \cite[Theorem 3.3]{Mishra2019} in the current notation, $P_-$ has singular value decomposition
\begin{equation}
    P_- \phi_{n,k} = -2i\ 1_{0\le k\le n}\ \psi_{n,k}, \qquad n\ge 0,\quad k\in \Zm,
    \label{eq:SVDP}
\end{equation}
In particular, it becomes immediate that $P_- \colon H_{T,N,-}^{s+1/2}(\partial_+ S\Dm_E)\to H_{T,D,+}^{s+1/2} (\partial_+ S\Dm_E)$ is bounded for any $s\ge 0$, and its range coincides with $I_0^E (\wtH^{s,0} (\Dm_E))$, which we state here without proof.

\begin{theorem} \label{thm:rangeP}
    For any $s\ge 0$, one has
    \begin{equation}
	I_0^E (\wtH^{s,0}(\Dm_E)) = P_- \left( H_{T,N,-}^{s+1/2}(\partial_+ S\Dm_E) \right) ,
    \end{equation}
    as subspaces of $H_{T,D,+}^{s+1/2}(\partial_+ S\Dm_E)$.
\end{theorem}

 This is a Sobolev generalization of \cite[Thm 4.4.(ii)]{Pestov2004} and the slight refinement \cite[Proposition A.2]{Mishra2019}, which provides the smooth counterpart
\begin{equation}\label{eq:PU_Smooth}
    I_0^E (C^\infty(\Dm_E)) = P_- \left( \Calm (\partial_+ S\Dm_E) \right).
\end{equation}

\begin{remark}[On notation comparison with prior literature]\label{rem:notation} In prior articles \cite{Mishra2019,Mishra2022}, for $\sigma_1, \sigma_2 \in \{+,-\}$, the following spaces were introduced:
	\begin{equation}
		C_{\alpha,\sigma_1,\sigma_2}^{\infty}(\partial_+ S\Dm_E) = \{ u\in C^\infty(\partial_+ S\Dm_E),\ A_{\sigma_1}u \in C^\infty(\partial S\Dm_E), \quad u\circ \S_A^E = \sigma_2 u\}.
	\end{equation}
	In light of the observation that $C_{\alpha,-,\sigma_2}^\infty (\partial_+ S\Dm_E) = \mu C_{\alpha,+,\sigma_2}^\infty (\partial_+ S\Dm_E)$ for $\sigma_2\in \{+,-\}$ and in an attempt to save notation, we have chosen in the present article to remove $\sigma_1$ from the notation. Hence, the correspondence (spaces on the right are those in the present article) is as follows: For $\sigma\in \{+,-\}$,
	\begin{equation}
		C^\infty_{\alpha,+} \leftarrow C_\alpha^\infty, \qquad C_{\alpha,-}^\infty \leftarrow \mu C_\alpha^\infty, \qquad C_{\alpha,+,\sigma}^\infty \leftarrow C_{\alpha,\sigma}^\infty, \qquad C_{\alpha,-,\sigma}^\infty \leftarrow \mu C_{\alpha,\sigma}^\infty.
	\end{equation}	
\end{remark}

\subsection{Models of hyperbolic geodesics} \label{ssec:models}

Recall that the unit sphere bundle $S\Dm_H^\circ$ in $(\Dm_{H}^\circ,g_H)$ is defined in \eqref{eq:SDH}. In what follows, we will sometimes use the identification
\begin{equation}\label{eq:identification_sphere_bundle}
    (z,w)=\big(z,{2\Re(c\ e^{i\theta}\partial_{{z}})}\big)\in S\Dm_H^{\circ} \quad \leftrightarrow \quad (z,\theta)\in \Dm_H^{\circ}\times (\Rm/2\pi \mathbb{Z}). \index{$\theta$}
\end{equation}
A distinguished $g_H$-unit speed geodesic passing through the $g_H$-unit vector $(z,\theta) = (0,0)$ is given by the function $\x(t)$ defined in \eqref{eq:hyp_fan_beam} for which $(\x(0),\dot \x(0)) = (0,1/2)$. Any other geodesic on $\Dm_H$ can be obtained as the image of $\x(t)$ under an element of $\mathrm{PSU}(1,1)$: Specifically, the unit-speed geodesic passing through $(z_1, \theta)$ at $t=0$ is given by
\begin{equation}
    \gamma_{z_1,\theta}(t) \coloneqq T_{z_1,\theta} (\x(t)), \qquad T_{z_1,\theta} (z) \coloneqq \frac{e^{i\theta}z + z_1}{1+e^{i\theta} \overline{z_1} z}.
    \label{eq:geodesic}
\end{equation}

We now discuss certain models of all oriented geodesics on $(\Dm_H,g_H)$.

\subsubsection{Vertex parameterization} \label{sec:vertex}

Using \eqref{eq:geodesic}, define, for $(\omega,s)\in \add{(\Rm/2\pi \mathbb{Z})}\times (-1,1)$,
\begin{equation}
    \gammav_{\omega,s}(t) \coloneqq T_{s e^{i\omega}, \omega+\pi/2} (\x(t)) = e^{i\omega} \frac{s+i\x(t)}{1+is \x(t)}, \qquad t\in \Rm,
    \label{eq:Hgeo}
\end{equation}
with $\x(t)$ defined in \eqref{eq:hyp_fan_beam}. This geodesic is parameterized so as to attain its vertex (point closest to the origin) at $t=0$, equal to $s e^{i\omega}$, with direction $\omega+\pi/2$. In this way the set $\add{(\Rm/2\pi \Zm)_{\omega}}\times (-1,1)_s$ gives all oriented geodesics, with orientation-reversing involution $(\omega,s)\mapsto (\omega+\pi, -s)$. Note the important identity used below, with $x$ as in \eqref{eq:xbdf}:
\begin{equation}
    x(\gammav_{\omega,s}(t)) = \frac{1-s^2}{1+s^2} \frac{1-\x(t)^2}{1+\x(t)^2} = \frac{1-s^2}{1+s^2} \frac{1}{\cosh(t)}.
    \label{eq:xvertex}
\end{equation}

\subsubsection{Horocyclic parameterization} \label{sec:horocycle}

We now discuss the derivation of the parameterization \eqref{eq:hyp_fan_beam}, whose analogy with Euclidean fan-beam coordinates will become clear in Proposition \ref{prop:projEquiv}.

The group $G=\mathrm{SL}(2,\Rm)$ acts on the upper half plane model of hyperbolic space
\begin{equation*}
	\Hm^2=\{z=x_\Hm+iy_\Hm\in \mathbb{C}:y_\Hm>0\},\quad g_{\Hm}=\frac{\d x_\Hm^2+\d y_\Hm^2}{y_\Hm^2}
\end{equation*}
by isometries via M\"obius transformations, that is, $\left[\begin{smallmatrix} a & b\\ c & d \end{smallmatrix}\right]\cdot z=\frac{az+b}{cz+d}$. An Iwasawa decomposition of $G$ reads $G = KNA$, where $A,N,K\leq G$ are the subgroups
\begin{eqnarray*}
	A&=&\left\{A(t)=\left[\begin{smallmatrix}
		e^{-t/2} & 0\\
		0 & e^{t/2}
	\end{smallmatrix}\right]:t\in \Rm\right\},    \quad
	N=\left\{N(a)=\left[\begin{smallmatrix}
		1 & a\\
		0 & 1
	\end{smallmatrix}\right]:a\in \Rm\right\},  \\
	K&=&\left\{K(\beta)=\left[\begin{smallmatrix}
		\cos(\beta/2) & \sin(\beta/2)\\
		-\sin(\beta/2) & \cos(\beta/2)
	\end{smallmatrix}\right]:\beta\in \Rm\right\}.
\end{eqnarray*}
Conjugating by the Cayley transform $C(z)=\frac{z-i}{z+i}$, which is an isometry $C:(\mathbb{H}^2,g_{\mathbb{H}})\to (\Dm_H^\circ,g_H)$, we obtain an action of $\mathrm{SL}(2,\Rm)$ on $\Dm_H^\circ$ by isometries. For $z\in \Dm_H^\circ$, we then have $K(\beta)\cdot z=e^{i\beta}z$, $A(t)\cdot z=\frac{z-\x(t)}{1-z\x(t)}$ (with $\x(t)$ defined in \eqref{eq:hyp_fan_beam}), and $N(a)\cdot z=\frac{z(2+ia)-ia}{ia z+(2-ia)}$. Further, $A(t)\cdot 0$ is a geodesic, and one finds that the expression \eqref{eq:hyp_fan_beam} of $\gamma_{\beta,a}(t)$ is nothing but
\begin{equation*}
    \gamma_{\beta,a}(t) = K(\beta)N(a)A(t)\cdot 0.
\end{equation*}
For future reference, $\gamma_{\beta,a}$ has endpoints
\begin{equation}\label{eq:endpoints}
    \gamma_{\beta,a}(-\infty)=e^{i\beta}\quad \text{ and }\quad\gamma_{\beta,a}(+\infty)= -e^{i\beta} \frac{1+ia}{1-ia} = e^{i(\beta+\pi + 2\tan^{-1}a)}.
\end{equation}
The parameterization \eqref{eq:hyp_fan_beam} is called {\em horocyclic} in that, when $\beta,t$ are frozen, the curve $a\mapsto K(\beta)N(a)A(t)\cdot 0$ parameterizes a horocycle with ideal point $e^{i\beta}$, generally with non-unit speed.

\smallskip

\noindent \textbf{The map $\pih$.} We now give the expression of the map $\pih$ defined in \eqref{eq:F}, used to define the backprojection operator \eqref{eq:backproj}.

\begin{lemma}\label{lem:pih}
    With $\pih$ defined in \eqref{eq:F}, we have that for every $(z,\theta)\in S\Dm_H^\circ$,
    \begin{equation}
	    \pih(z,\theta) = (\beta,a), \quad \text{where} \quad \beta = \theta + \pi + 2\arg (1- ze^{-i\theta}), \quad a = \frac{2\Im(z e^{-i\theta})}{1-|z|^2}.
	\label{eq:pihcomponents}
    \end{equation}
\end{lemma}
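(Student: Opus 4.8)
The plan is to identify the oriented geodesic through $(z,\theta)$ via the parameterization \eqref{eq:geodesic} and then match its two ideal endpoints against those of $\gamma_{\beta,a}$ recorded in \eqref{eq:endpoints}. By definition, $\pih(z,\theta)$ is the unique $(\beta,a)$ for which $\gamma_{\beta,a}$ agrees, as an oriented geodesic, with $\gamma_{z,\theta}(t)=T_{z,\theta}(\x(t))$. Since $\x(t)=\tanh(t/2)\to\pm1$ as $t\to\pm\infty$, the backward and forward endpoints of $\gamma_{z,\theta}$ are $T_{z,\theta}(-1)$ and $T_{z,\theta}(+1)$, and these must respectively equal $\gamma_{\beta,a}(-\infty)=e^{i\beta}$ and $\gamma_{\beta,a}(+\infty)=e^{i(\beta+\pi+2\tan^{-1}a)}$.

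First I would extract $\beta$ from the backward endpoint. Writing $\zeta:=ze^{-i\theta}$ and using $T_{z,\theta}(-1)=\frac{z-e^{i\theta}}{1-e^{i\theta}\bar z}$, the numerator factors as $z-e^{i\theta}=-e^{i\theta}(1-\zeta)$ while the denominator equals $\overline{1-\zeta}$. The elementary identity $w/\bar w=e^{2i\arg w}$ then gives
\begin{equation*}
    e^{i\beta}=T_{z,\theta}(-1)=-e^{i\theta}\,\frac{1-\zeta}{\overline{1-\zeta}}=e^{i(\theta+\pi+2\arg(1-ze^{-i\theta}))},
\end{equation*}
which is the claimed value of $\beta$ (modulo $2\pi$).

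Next I would recover $a$ from the forward endpoint. The same manipulation yields $T_{z,\theta}(+1)=e^{i\theta}\,(1+\zeta)/\overline{1+\zeta}=e^{i(\theta+2\arg(1+\zeta))}$, so comparing arguments with $e^{i(\beta+\pi+2\tan^{-1}a)}$ and substituting the value of $\beta$ gives $2\tan^{-1}a\equiv 2\arg\frac{1+\zeta}{1-\zeta}\pmod{2\pi}$. Rationalizing by $\overline{1-\zeta}$ shows
\begin{equation*}
    \frac{1+\zeta}{1-\zeta}=\frac{(1-|\zeta|^2)+2i\,\Im\zeta}{|1-\zeta|^2};
\end{equation*}
since $|\zeta|=|z|<1$ on $\Dm_H^\circ$, this number has positive real part, so its argument equals $\tan^{-1}\frac{2\Im\zeta}{1-|z|^2}\in(-\pi/2,\pi/2)$. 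As $\tan^{-1}a$ also lies in $(-\pi/2,\pi/2)$, the two coincide and $a=\frac{2\Im(ze^{-i\theta})}{1-|z|^2}$, as asserted.

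The only delicate point is the branch bookkeeping: the endpoints determine $\beta$ and $\beta+\pi+2\tan^{-1}a$ only modulo $2\pi$, so to pin $a\in\Rm$ uniquely one must check the branches line up. This is exactly what the positivity of $\Re\frac{1+\zeta}{1-\zeta}$ provides, placing both $\arg\frac{1+\zeta}{1-\zeta}$ and $\tan^{-1}a$ in the principal interval $(-\pi/2,\pi/2)$ (equivalently $2\tan^{-1}a\in(-\pi,\pi)$), so that the congruence becomes an equality. I would also confirm that $\gamma_{z,\theta}$ genuinely passes through $(z,\theta)$ with the correct orientation, so that the backward and forward endpoints are not interchanged; this follows from $T_{z,\theta}(0)=z$ together with $\dot\gamma_{z,\theta}(0)=\tfrac12 T_{z,\theta}'(0)=c\,e^{i\theta}$, which is precisely the $g_H$-unit vector of direction $\theta$ under the identification \eqref{eq:identification_sphere_bundle}.
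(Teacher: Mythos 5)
Your proposal is correct and follows essentially the same route as the paper: identify the geodesic through $(z,\theta)$ as $T_{z,\theta}(\x(t))$, compute its ideal endpoints $T_{z,\theta}(\pm 1)$, and match them against $\gamma_{\beta,a}(\mp\infty)$ from \eqref{eq:endpoints}. The only cosmetic difference is in extracting $a$: the paper divides the two endpoint equations and invokes injectivity of $\Rm\ni x\mapsto \frac{1+ix}{1-ix}$ (which sidesteps any modulo-$2\pi$ issue), whereas you compare arguments and resolve the branch ambiguity via positivity of $\Re\frac{1+\zeta}{1-\zeta}$ — both are valid.
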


\begin{proof}
    By definition, $\pih(z,\theta)$ is the unique $(\beta,a)\in \Gh$ with $(\gamma_{\beta,a} (t),\dot\gamma_{\beta,a} (t))=(z, 2\Re(c\ e^{i\theta}\partial_{{z}}))$ for some $t\in \Rm$. Note that such a geodesic has equation $T_{z,\theta}(\x(t))$, where $T_{z,\theta}(z') = \frac{e^{i\theta}z' + z}{1+e^{i\theta}\overline{z}z'} = e^{i\theta} \frac{z'+\zeta}{\overline{\zeta}z' + 1}$ with $\zeta \coloneqq z e^{-i\theta}$\index{$\zeta$}. In particular, with $\x(\pm\infty) = \pm 1$, its points at infinity are given by
\begin{equation}
    T_{z,\theta} (-1) = e^{i\theta} \frac{-1+\zeta}{-\overline{\zeta}+1} = e^{i(\theta+\pi)} \frac{1-\zeta}{1-\overline{\zeta}}, \qquad T_{z,\theta}(1) = e^{i\theta} \frac{1+\zeta}{1+\overline{\zeta}}.
\end{equation}
To find the horocyclic coordinates $(\beta,a)$, we compare the above with $\gamma_{\beta,a}(\pm \infty)$ in \eqref{eq:endpoints}, giving
\begin{equation}
	e^{i(\theta+\pi)} \frac{1-\zeta}{1-\overline{\zeta}} = e^{i\beta}, \qquad e^{i\theta} \frac{1+\zeta}{1+\overline{\zeta}} = -e^{i\beta} \frac{1+ia}{1-ia}.
\end{equation}
The first equation above gives the $\beta$ component of \eqref{eq:pihcomponents} directly. Next, taking the ratio of the above, we obtain
\begin{equation}
	\frac{1+ia}{1-ia} = \frac{(1+\zeta)(1-\overline{\zeta})}{(1+\overline{\zeta})(1-\zeta)} = \frac{1+i \left( (\zeta-\overline{\zeta})/(i(1-|\zeta|^2)) \right)}{1-i \left( (\zeta-\overline{\zeta})/(i(1-|\zeta|^2)) \right)}.
\end{equation}
By injectivity of the map $\Rm\ni x\mapsto \frac{1+ix}{1-ix}$, $a = \frac{\zeta-\overline{\zeta}}{i(1-|\zeta|^2)}$, which then gives the second component of \eqref{eq:pihcomponents}. Lemma \ref{lem:pih} is proved.
\end{proof}

The equivalence between vertex and horocyclic parameterizations is as follows.
\begin{lemma}\label{lem:horo2vertex}
    For any $a\in \Rm$, $\beta\in \add{\Rm/2\pi\Zm}$, let $t_0 = -\log \sqrt{1+a^2}$. Then, we have
    \begin{equation}
    \begin{aligned}
	\gamma_{\beta,a} (t+t_0) &= \gammav_{\beta + \pi/2 -2 \tan^{-1}s, s} (t),\quad t\in \Rm,\\   \quad s &\coloneqq \frac{-a}{\sqrt{1+a^2}+1} = - \tan \left(\frac{1}{2} \tan^{-1}a \right).
	\label{eq:horo2vertex}\index{$s$}
   \end{aligned}
    \end{equation}	
\end{lemma}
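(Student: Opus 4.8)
The plan is to treat both sides as unit-speed parameterizations of the same oriented geodesic and to exploit the rigidity this entails. Both $t\mapsto \gamma_{\beta,a}(t)$ and $t\mapsto \gammav_{\omega,s}(t)$ are $g_H$-unit-speed geodesics: the former by its construction as $K(\beta)N(a)A(t)\cdot 0$ with $A(t)\cdot 0$ unit speed, the latter because $\gammav_{\omega,s}=T_{se^{i\omega},\omega+\pi/2}(\x(\cdot))$ is the image of the unit-speed geodesic $\x(\cdot)$ under the isometry $T_{se^{i\omega},\omega+\pi/2}$. Two unit-speed parameterizations of the same oriented geodesic differ by a constant shift of the parameter, so it suffices to (i) show the two curves are the same oriented geodesic, and (ii) identify the shift as exactly $t_0$.

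For (i), I would match the two pairs of ideal endpoints. Using $\x(\pm\infty)=\pm 1$ in \eqref{eq:Hgeo} gives $\gammav_{\omega,s}(\pm\infty)=e^{i\omega}\frac{s\pm i}{1\pm is}$. Writing $s=-\tan\psi$ with $\psi:=\tfrac12\tan^{-1}a$ (as dictated by the formula for $s$), a short computation collapses these to $\gammav_{\omega,s}(\pm\infty)=e^{i(\omega\pm(\pi/2+2\psi))}=e^{i(\omega\pm(\pi/2+\tan^{-1}a))}$. Since the same $s$-formula yields $-2\tan^{-1}s=2\psi=\tan^{-1}a$, the prescribed angle is $\omega=\beta+\pi/2-2\tan^{-1}s=\beta+\pi/2+\tan^{-1}a$, whence $\gammav_{\omega,s}(-\infty)=e^{i\beta}$ and $\gammav_{\omega,s}(+\infty)=e^{i(\beta+\pi+2\tan^{-1}a)}$. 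These agree with the endpoints of $\gamma_{\beta,a}$ recorded in \eqref{eq:endpoints}, and the ordering of the endpoints matches, so the two curves are the same oriented geodesic; therefore $\gamma_{\beta,a}(\cdot+c)=\gammav_{\omega,s}(\cdot)$ for some constant $c\in\Rm$.

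For (ii), I would evaluate at $t=0$. Since $\gammav_{\omega,s}(0)=se^{i\omega}$ is the vertex of the geodesic, determining $c$ amounts to verifying $\gamma_{\beta,a}(t_0)=se^{i\omega}$ for the claimed $t_0=-\log\sqrt{1+a^2}$; injectivity of $t\mapsto\gamma_{\beta,a}(t)$ then forces $c=t_0$. Concretely, from $e^{t_0}=(1+a^2)^{-1/2}$ one gets $\x(t_0)=\tanh(t_0/2)=\frac{1-\sqrt{1+a^2}}{1+\sqrt{1+a^2}}$; substituting this into \eqref{eq:hyp_fan_beam} and simplifying (setting $r:=\sqrt{1+a^2}$ and using $r^2=1+a^2$) reduces $\gamma_{\beta,a}(t_0)$ to $e^{i\beta}\frac{a(a-i)}{r(r+1)}$, while expanding $se^{i\omega}$ with $e^{i\tan^{-1}a}=(1+ia)/r$ gives the same value. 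This establishes \eqref{eq:horo2vertex}.

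The main obstacle is purely computational: carrying out the endpoint simplification and, especially, the vertex verification so that the somewhat opaque value $t_0=-\log\sqrt{1+a^2}$ emerges cleanly. The half-angle bookkeeping relating $s$, $\psi$, and $\tan^{-1}a$ must be handled with care, and one should fix the branch of $\tan^{-1}$ (here $\psi\in(-\pi/4,\pi/4)$, so $\tan^{-1}s=-\psi$ is unambiguous). As a consistency check on this value, \eqref{eq:xvertex} gives $x(\gammav_{\omega,s}(0))=\frac{1-s^2}{1+s^2}=\cos(2\psi)=(1+a^2)^{-1/2}=\muh(a)$, which matches the maximal value of $x$ along the horocyclic geodesic recorded in \eqref{eq:xcontrol}. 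Alternatively, one could bypass the shift argument entirely and verify \eqref{eq:horo2vertex} by direct substitution, inserting the addition formula $\x(t+t_0)=\frac{\x(t)+\x(t_0)}{1+\x(t)\x(t_0)}$ into \eqref{eq:hyp_fan_beam} and checking that the resulting M\"obius transformation of $\x(t)$ equals $e^{i\omega}\frac{s+i\x(t)}{1+is\x(t)}$; this route is more mechanical but self-contained.
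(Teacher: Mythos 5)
Your proof is correct, and its skeleton coincides with the paper's: both arguments observe that the two curves are unit-speed parameterizations of the same oriented geodesic, hence agree up to a constant parameter shift, and then pin down the shift by evaluating at $t=0$, where $\gammav_{\omega,s}$ sits at its vertex $se^{i\omega}$. The execution of each step differs, though. For identifying the geodesic, the paper invokes the footprint map of Lemma \ref{lem:pih}, computing $(\beta,a)=\pih(se^{i\omega},\omega+\pi/2)$ and inverting to get $\omega=\beta+\pi/2-2\tan^{-1}s$ and $s=-a/(\sqrt{1+a^2}+1)$; you instead match the ordered pairs of ideal endpoints against \eqref{eq:endpoints} directly. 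This is self-contained and valid, but it essentially re-derives the endpoint computation on which Lemma \ref{lem:pih} already rests, so the paper's route is shorter given the available tools. For the shift, the paper uses the characterization of the vertex as the point closest to the origin, so that $t'=\mathrm{argmin}_{t}|\gamma_{\beta,a}(t)|^2$, and solves the minimization via \eqref{eq:mod_gamma}; this \emph{derives} the value $t_0=-\log\sqrt{1+a^2}$ rather than assuming it. You instead verify $\gamma_{\beta,a}(t_0)=se^{i\omega}$ by direct substitution, which is legitimate here since $t_0$ is supplied by the statement, and your algebra is sound: with $r=\sqrt{1+a^2}$ one indeed finds $\x(t_0)=(1-r)/(1+r)$ and both sides reduce to $e^{i\beta}\,a(a-i)/(r(r+1))$. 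In short, the paper's argument is a bit more economical and explains where $t_0$ comes from, while yours is more mechanical but equally rigorous, and your consistency check $x(\gammav_{\omega,s}(0))=\muh(a)$ against \eqref{eq:xcontrol} is a nice touch.
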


\begin{proof}
    The horocyclic coordinates $(\beta,a)$ corresponding to the geodesic $\gamma^{\mathrm{v}}_{\omega,s}$ can be found by setting
    \begin{equation}
    \begin{aligned}
    \label{eq:b_a_to_s_omega}
	(\beta,a)&=\pih(se^{i\omega},\omega+\pi/2)\\
   &=\Big(\omega+\frac{3\pi}{2}+2\arg(1+is),\frac{2\Im(-is)}{1-s^2}\Big)=\Big(\omega+\frac{3\pi}{2}+2\tan^{-1}s,\frac{-2s}{1-s^2}\Big).	
   \end{aligned}
    \end{equation}
    Now, solve for $(\omega,s)$ using that $s\in (-1,1)$ to find $\omega=\beta+\pi/2-2\tan^{-1}s$ and $s=\frac{-a}{\sqrt{1+a^2}+1}$.

    If $(\beta,a)=\pih(se^{i\omega},\omega+\pi/2)$, $\gamma_{\beta,a}(t)$ and $ \gamma^{\mathrm{v}}_{\omega,s}(t)$ are unit speed parameterizations of the same oriented geodesic, hence there exists a unique $t'=t'(\beta,a)\in \Rm$ such that
    $\gamma_{\beta,a}(t'+t)=\gamma^{\mathrm{v}}_{\omega,s}(t)$.
    Setting $t=0$, we observe that $t'=\mathrm{argmin}_{t\in \Rm} |\gamma_{\beta,a}(t)|^2$. We now compute
    \begin{equation}\label{eq:mod_gamma}
	|\gamma_{\beta,a}(t)|^2=1+4\frac{\tanh^2(t/2)-1}{a^2(\tanh(t/2)+1)^2+4}=1-\frac{4}{e^{-t}+2+(1+a^2)e^t}.
    \end{equation}
    This is minimized when $e^{-t}+2+(1+a^2)e^t$ is minimized, which by differentiation occurs at $t'$ where $-e^{-t'}+(1+a^2)e^{t'}=0$, i.e., at $t'=t_0$ as defined in the \break statement.
\end{proof}

\subsubsection{Cosphere bundle parameterization}\label{sec:cosphere}

Another parameterization of the space of oriented geodesics on $\Dm_H^\circ$ appears in recent studies of the X-ray transform on asymptotically hyperbolic manifolds \cite{Graham2019} and is obtained by studying geodesic flows on the cosphere bundle. The purpose of this section is to show that this parameterization recovers, up to the sign, the horocycle parameterization \eqref{eq:hyp_fan_beam} in the case of the hyperbolic disk. See Lemma \ref{lm:cosphere-horocycle} below.

To recall the context, let $M$ be a $n+1$-dimensional compact manifold with boundary whose interior $M^\circ$ is equipped with an asymptotically hyperbolic metric in the sense that there exist local coordinates $(\tx,y)$ near $\partial M$, where $\tx$\index{$\tx$}\index{$y$} is a suitable bdf such that the metric takes the form
\begin{equation*}
	g = \frac{\d \tx^2 + \sum_{ij}h_{ij}(\tx)\,\d y^i\,\d y^j}{\tx^2};
\end{equation*}
a bdf $\tilde{x}$ with this property is called a \emph{geodesic boundary defining function}. In this context, the cosphere bundle $S^* M^\circ$ can be viewed as a subset of the $b$-cotangent bundle $^bT^*M$ (see \cite{Melrose1993}) restricted to $M^\circ$. That is,
\begin{equation*}
	S^* M^\circ = \left\{\left(\overline{\xi_0}\frac{\d\tx}{\tx} + \sum_i{\eta_i\,\d y^i}\right)\Big|_p\,:\,p\in M^\circ,\quad \overline{\xi_0}^2 + \tx^2\sum_{ij}h^{ij}\eta_i\eta_j = 1\right\}.
\end{equation*}
Then $S^*M^\circ$ extends smoothly as a submanifold $\overline{S^*M^\circ}$ of $^bT^* M$ to the boundary, with $\overline{S^*M^\circ}$ at the boundary given by
\begin{equation*}
    \partial\overline{S^*M^\circ} = \overline{S^*M^\circ}|_{\partial M} = \left\{\left(\overline{\xi_0}\frac{\d\tx}{\tx} + \sum_i{\eta_i\,\d y^i}\right)\Big|_p\,:\,p\in \partial M,\quad \overline{\xi_0}^2 = 1\right\}.
\end{equation*}
We have $\partial\overline{S^*M^\circ}= \partial_+S^*M^\circ \cup \partial_-S^*M^\circ$, where
\begin{equation*}
    \partial_{\pm} S^*M^\circ \coloneqq \left\{\left(\pm\frac{\d\tx}{\tx} + \sum_i{\eta_i\,\d y^i}\right)\Big|_p\,:\,p\in\partial M,\quad \sum_i{\eta_i\,\d y^i}\Big|_p\in T_p^* \partial M \right\}.
\end{equation*}
Then, the Hamiltonian vector field $X$\index{$X$} corresponding to geodesic flow can be written as $\tx\overline{X}$ where $\overline{X}$ is smooth and transverse to $\partial\overline{S^*M^\circ}$. Since the orbits of $X$ and $\overline{X}$ agree up to reparameterization on $S^*M^\circ$, we can identify geodesics with their incoming/outgoing b-covectors in $\partial_{\pm} S^*M^\circ$. Concretely, given a unit-speed geodesic $\gamma(t)$, we consider the limits
\[\lim_{t\to\mp\infty} g\left(\dot\gamma(t)\right)\]
where we interpret $g$ acting on a vector as converting it to a covector; such limits exist in $^bT^*M$ and belong to $\partial_{\pm} S^* M^\circ$.

In the case of $(\Dm_H^\circ,g_H)$, a family of geodesic bdfs, indexed by a constant $C>0$, is given by
\begin{equation*}
	\tx(z) = C\frac{1-|z|}{1+|z|}, \quad z\in \Dm_H^\circ,
\end{equation*}
in which case $g_H = \frac{1}{\tx^2} \left( \d\tx^2 + \left(\frac{C^2-\tx^2}{2C}\right)^2\,\d\omega^2 \right)$. Then, writing $\gamma(t) = (\tx(t),\omega(t))$, we have
\begin{equation*}
    g_H(\dot \gamma(t)) = \frac{\dot{\tx}(t)}{\tx (t)} \frac{\d\tx}{\tx} + \left( \frac{C^2-\tx^2(t)}{2C} \right)^2 \frac{\dot\omega(t)}{\tx^2(t)} \d\omega.	
\end{equation*}

\begin{lemma}\label{lm:cosphere-horocycle}
    In the cosphere bundle parameterization, the geodesic $\gamma_{\beta,a}(t)$ defined in \eqref{eq:hyp_fan_beam} satisfies
    \begin{equation}
	\lim_{t\to\pm\infty}\frac{\dot{\tx}(t)}{\tx(t)} = \mp 1 \quad \text{and} \quad  \left( \frac{C^2-\tx^2(t)}{2C} \right)^2 \frac{\dot\omega(t)}{\tx^2(t)} = -a\ \text{ for all }t	,
    \end{equation}
    where we write $\gamma_{\beta,a}(t)= (\tx(t),\omega(t))$. In particular, the left-hand side of the second equation is constant. Consequently,
    \[   \lim_{t\to \pm\infty} g_H(\dot\gamma_{\beta,a}(t)) = \left( \mp\frac{\d\tx}{\tx} - a\,\d\omega\right)\big|_{\gamma_{\beta,a}(\pm \infty)} 
\]
    so in particular the geodesic whose incoming covector is $\left(\frac{\d\tx}{\tx} + \eta\,\d\omega\right)|_{e^{i\beta}}$ is given by $\gamma_{\beta,-\eta}$.
\end{lemma}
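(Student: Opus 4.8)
The plan is to reduce everything to explicit computations in the variable $\x=\tanh(t/2)$, writing $\gamma_{\beta,a}(t)=e^{i\beta}N/M$ with $N=2\x+ia(1+\x)$ and $M=-2+ia(1+\x)$, so that $|N|^2=4\x^2+a^2(1+\x)^2$, $|M|^2=4+a^2(1+\x)^2$, and $r^2:=|\gamma_{\beta,a}(t)|^2=|N|^2/|M|^2$. For the radial statement I would start from \eqref{eq:mod_gamma}, which gives $r^2=1-4/D$ with $D=e^{-t}+2+(1+a^2)e^t$. Logarithmic differentiation of $\tx=C(1-r)/(1+r)$ (here $r=|\gamma_{\beta,a}(t)|$) yields $\dot{\tx}/\tx=-2\dot r/(1-r^2)$, and substituting $1-r^2=4/D$ together with $\dot r=2\dot D/(rD^2)$ gives the compact form $\dot{\tx}/\tx=-\dot D/(rD)$, where $\dot D=-e^{-t}+(1+a^2)e^t$. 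Since $r\to 1$ as $t\to\pm\infty$ (both ideal endpoints lie on $\partial\Dm_H$) and $D,\dot D$ are dominated by $(1+a^2)e^t$ as $t\to+\infty$ and by $e^{-t}$ as $t\to-\infty$, the stated limits $\mp 1$ follow at once.

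For the angular component I would first note, as a conceptual check, that $g_H$ is rotationally invariant, so the quantity $p_\omega:=\big(\tfrac{C^2-\tx^2}{2C}\big)^2\dot\omega/\tx^2$ (the $\d\omega$-component of $g_H(\dot\gamma)$) is a Clairaut-type conserved quantity; this already accounts for its constancy, and it remains to evaluate it. Writing $\omega=\beta+\arg(N/M)$ and differentiating, $\dot\omega=\dot\x\,\Im\big(((2+ia)M-ia\,N)/(NM)\big)$. The key simplification is that the numerator $(2+ia)M-ia\,N$ collapses, after expansion, to the constant $-4$ (all $\x$-dependent terms cancel), so that $\dot\omega=4\dot\x\,\Im(NM)/(|N|^2|M|^2)$, where $\Im(NM)=-2a(1-\x^2)$ and $\dot\x=\tfrac12(1-\x^2)$.

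It then remains to combine this with the coefficient $\big(\tfrac{C^2-\tx^2}{2C}\big)^2\tx^{-2}$, which I would rewrite purely in terms of $r$ as $4r^2/(1-r^2)^2=r^2D^2/4$. The whole computation reduces to the single algebraic identity $r^2D^2(1-\x^2)^2=|N|^2|M|^2$, which follows from $D(1-\x^2)=|M|^2$ (obtained by writing $e^{\pm t}=(1\pm\x)/(1\mp\x)$ and simplifying $D$) together with $r^2=|N|^2/|M|^2$. Granting this identity one gets $p_\omega=\dot\x\,\Im(NM)/(1-\x^2)^2=-a$, confirming both the value and the constancy. Finally I would assemble the covector from \eqref{eq:covector}: the radial limits and $p_\omega=-a$ give $\lim_{t\to-\infty}g_H(\dot\gamma_{\beta,a})=(\d\tx/\tx-a\,\d\omega)|_{e^{i\beta}}$ and $\lim_{t\to+\infty}g_H(\dot\gamma_{\beta,a})=(-\d\tx/\tx-a\,\d\omega)|_{e^{i(\beta+\pi+2\tan^{-1}a)}}$, reading the base points off \eqref{eq:endpoints}; matching the incoming covector $(\d\tx/\tx+\eta\,\d\omega)|_{e^{i\beta}}$ then forces $\eta=-a$, i.e.\ the corresponding geodesic is $\gamma_{\beta,-\eta}$. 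The main obstacle is purely bookkeeping — tracking which endpoint is incoming versus outgoing and verifying the algebraic identity — rather than anything conceptual.
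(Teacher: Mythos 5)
Your proof is correct and follows essentially the same route as the paper's: both compute the radial part via $\dot{\tx}/\tx = -2\dot r/(1-r^2)$ with the explicit formula for $r^2$, and the angular part via $\dot\omega = \Im(\dot\gamma_{\beta,a}/\gamma_{\beta,a})$ followed by an algebraic identity, then assemble the covector limits using \eqref{eq:covector} and \eqref{eq:endpoints}. The only differences are organizational — you work in the variable $\x$ with the identity $D(1-\x^2)=|M|^2$ where the paper works in $e^{\pm t}$, and your Clairaut remark explaining the constancy of the $\d\omega$-component a priori is a nice conceptual addition the paper omits — but the substance of the argument is the same.
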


Note that the measure $\d\beta \d a$ on $\Gh$ coincides with that induced by the symplectic form $\d\beta\wedge \d\eta$ on $T^*\Sm^1_{\beta,\eta}$.

\begin{proof} Write $r(t) \coloneqq |\gamma_{\beta,a}(t)|$. By direct calculation,
    \begin{equation}
	r^2 = \frac{e^{-t} -2 + (1+a^2)e^t}{e^{-t} + 2 + (1+a^2) e^t}, \qquad \frac{\d}{\d t} r^2 = 4\frac{-e^{-t}+(1+a^2) e^t}{(e^{-t} + 2 + (1+a^2)e^{t})^2}.
	\label{eq:direct}
    \end{equation}
    For the $\d\tx$ component, using \eqref{eq:direct}, we compute
    \begin{equation}
	\frac{\dot\tx}{\tx} = \frac{-2 \dot r}{1-r^2} = \frac{-1}{r} \frac{1}{1-r^2} \frac{\d}{\d t} r^2 = \frac{-1}{r} \frac{-e^{-t}+ (1+a^2) e^t}{e^{-t} + 2+ (1+a^2)e^t} \to \mp 1 \quad \text{as}\quad t\to \pm \infty.
    \end{equation}
    On to the $\d\omega$ component, from $1-r^2 = \frac{4}{e^{-t}+2+(1+a^2)e^t}$, we have $e^{-t}+2+(1+a^2)e^t = \frac{4}{1-r^2}$, thus $e^{-t}-2+(1+a^2)e^t = \frac{4r^2}{1-r^2}$. Moreover, from $\gamma_{\beta,a}(t) = r(t)e^{i\omega(t)}$, such that $\frac{\dot\gamma_{\beta,a}(t)}{\gamma_{\beta,a}(t)} = \frac{\dot r(t)}{r(t)} + i\dot\omega(t)$, we deduce that
    \[\dot\omega = \Im\left(\frac{\dot\gamma_{\beta,a}(t)}{\gamma_{\beta,a}(t)}\right) = \frac{-4a}{(e^{-t}-(1+a^2)e^t)^2+4a^2}.\]
    Note that
    \begin{align*}
	(e^{-t}-(1+a^2)e^t)^2+4a^2 &= e^{-2t}-2(1+a^2)+(1+a^2)e^{2t}+4a^2 \\
	&= e^{-2t}+2(1+a^2)+(1+a^2)e^{2t} - 4 \\
	&= (e^{-t}+(1+a^2)e^t)^2-2^2 \\
	&= (e^{-t}+2+(1+a^2)e^t)(e^{-t}-2+(1+a^2)e^t) \\
	&= \frac{4}{1-r^2}\cdot\frac{4r^2}{1-r^2} = \frac{16r^2}{(1-r^2)^2}.
    \end{align*}
    It follows that
    \[\left(\frac{C^2-\tx^2}{2C}\right)^2\dot\omega = \left(\frac{2r}{1-r^2}\right)^2 \frac{-4a}{16r^2/(1-r^2)^2} = -a, \]
    as claimed.
\end{proof}

\subsection{Santal\'o's formula} \label{sec:SantaloH}

Any smooth vector field on $S\Dm_H^\circ$ (and, more generally, on the sphere bundle of any oriented Riemannian surface) can be written as a linear combination over $C^\infty(\Dm_H^\circ)$ of three globally defined vector fields $X$, $X_\perp$, and $V$ (see \cite{Paternain2023}), acting on any $f\in C^\infty (S\Dm_H^\circ)$ by
\begin{equation}
    \begin{aligned}
        Xf(z,w)&=\frac{\d}{\d t}f(\phi_t(z,w))\big|_{t=0},\\
         X_\perp f{(z,w)}&=\frac{\d}{\d t}f(\psi_t(z,w))\big|_{t=0},\quad
    Vf{(z,w)}=\frac{\d}{\d t}f(r_t(z,w))\big|_{t=0}.
    \end{aligned}
\end{equation}
Here, $\phi_t$ denotes the geodesic flow, $\psi_t(z,w)$ is the curve in $S\Dm_H^\circ$ arising as the parallel transport of $(z,w)$ along the geodesic with initial velocity $(z,-w^\perp)$ (where $(\cdot)^\perp$ denotes fiberwise counterclockwise rotation by angle $\pi/2$), and $r_t(z,w)$ is the counterclockwise rotation of $(z,w)$ by angle $t$.
In terms of coordinates $(z,\theta)$, see \eqref{eq:identification_sphere_bundle}, they take the form (see \cite[Lemma 3.5.6]{Paternain2023})
\begin{equation}\label{SM_Vector_fields}
    \begin{aligned}
    X&= c \Big(2\Re(e^{i\theta}\partial_z)+2\Im(e^{i\theta}\partial_z \log c)\partial _\theta\Big),\\
    X_\perp&= c \left(2\Im(e^{i\theta}\partial_z)-2\Re (e^{i\theta}\partial_z \log c)\partial_\theta\right),\qquad
    V=\partial_\theta,
\end{aligned}
\end{equation}
with $c$ as in \eqref{eq:Poincare_metric}. In coordinates $(z,\theta)$, the Liouville form on $\Dm_H^{\circ}\times (\Rm/2\pi \mathbb{Z})$ is given by
\begin{equation}\label{eq:Liouville}
    \d\Sigma^3(z,\theta) = \d V_H \wedge \d\theta = c^{-2}(z) \frac{\d\bar{z}\wedge dz}{2i}\wedge \d\theta;
\end{equation}
note in particular that
\begin{equation}
	\d\Sigma^3 (X,-X_\perp, V) = 1.
	\label{eq:ONframe}
\end{equation}

The horocyclic parameterization \eqref{eq:hyp_fan_beam} induces a smooth bijection
\begin{equation}\label{eq:geod_param}
    \Gh \times \Rm\ni (\beta,a,t)\overset{F}{\longmapsto} (\gamma_{\beta,a} (t), \dot\gamma_{\beta,a}(t))\in S\Dm_H^\circ;
\end{equation}
it will be shown below that it is actually a diffeomorphism, and we will compute the pullback of the Liouville measure \eqref{eq:Liouville}.

\begin{lemma}\label{lm:pushforward}
    With $F$ defined in \eqref{eq:geod_param}, we have $F^*\d\Sigma^3= \d\beta\wedge \d a\wedge\d t$.
\end{lemma}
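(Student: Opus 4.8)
The plan is to evaluate the $3$-form $F^*\d\Sigma^3$ on the coordinate frame $(\partial_\beta,\partial_a,\partial_t)$, i.e.\ to show $\d\Sigma^3(F_*\partial_\beta, F_*\partial_a, F_*\partial_t) = 1$. The decisive first observation is that, because $t$ is the unit-speed parameter along $\gamma_{\beta,a}$, the map $F$ of \eqref{eq:geod_param} satisfies $F_*\partial_t = X$, the geodesic vector field, at every point. Thus it remains to compute $\d\Sigma^3(F_*\partial_\beta, F_*\partial_a, X)$, and since $X$ occupies the third slot, only the components of $F_*\partial_\beta$ and $F_*\partial_a$ along $X_\perp$ and $V$ can contribute; using the normalization \eqref{eq:ONframe} in the form $\d\Sigma^3(X,X_\perp,V) = -1$, the quantity reduces to $-1$ times the $2\times 2$ determinant built from the $X_\perp$- and $V$-components of $F_*\partial_\beta$ and $F_*\partial_a$.

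Next I would remove the $t$-dependence. Writing $F(\beta,a,t) = \phi_t(F(\beta,a,0))$, for fixed $t$ the maps $(\beta,a)\mapsto F(\beta,a,t)$ and $(\beta,a)\mapsto F(\beta,a,0)$ differ by the diffeomorphism $\phi_t$, so that $F_*\partial_\beta|_{(\beta,a,t)} = (\phi_t)_*\big(F_*\partial_\beta|_{(\beta,a,0)}\big)$ and likewise for $\partial_a$; moreover $X|_{\phi_t(p)} = (\phi_t)_* X|_p$ since $X$ generates $\phi_t$. Because the geodesic flow preserves the Liouville form ($\phi_t^*\d\Sigma^3 = \d\Sigma^3$, equivalently $\mathcal{L}_X\d\Sigma^3 = 0$), it follows that $\d\Sigma^3(F_*\partial_\beta, F_*\partial_a, X)$ is independent of $t$. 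Hence it suffices to establish the identity at $t = 0$.

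At $t=0$ one carries out the computation explicitly: differentiating \eqref{eq:hyp_fan_beam} (together with the velocity direction $\theta$) in $\beta$ and in $a$ yields $F_*\partial_\beta$ and $F_*\partial_a$ as tangent vectors to $S\Dm_H^\circ$ in the coordinates $(z,\theta)$, and these are expanded in the frame $\{X,X_\perp,V\}$ using the explicit formulas \eqref{SM_Vector_fields}; the resulting transverse $2\times 2$ determinant should equal $-1$, giving the claimed value $+1$. I expect the main obstacle to be exactly this transverse-Jacobian bookkeeping --- extracting the $X_\perp$- and $V$-components with the correct signs --- which is where the orientation conventions must be handled with care.

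A cleaner alternative, which simultaneously produces the constant and bypasses the explicit Jacobian, exploits the group structure. By \eqref{eq:hyp_fan_beam} one has $F(\beta,a,t) = K(\beta)N(a)A(t)\cdot v_0$, the orbit of the base vector $v_0 = (\gamma_{0,0}(0),\dot\gamma_{0,0}(0))$ under the simply transitive, isometric action of $\mathrm{SL}(2,\Rm)$ on $S\Dm_H^\circ$. Since $\d\Sigma^3$ is invariant under this action, $F^*\d\Sigma^3$ is a left-invariant (Haar) $3$-form in the Iwasawa coordinates $(\beta,a,t)$; computing the Maurer--Cartan form $g^{-1}\d g$ with $g = K(\beta)N(a)A(t)$ shows that every such form is a constant multiple of $\d\beta\wedge\d a\wedge\d t$. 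The constant is pinned to $1$ by identifying $X$ and $V$ with the left-invariant vector fields generated by the one-parameter subgroups $A(t)$ and $K(t)$ respectively (geodesic flow and rotation correspond to right translation by $A$ and $K$), determining $X_\perp$ through the structure relation $[X,V] = -X_\perp$, and matching against \eqref{eq:ONframe}; here too the one delicate point is fixing the sign of the $X_\perp$-generator, which the normalization $\d\Sigma^3(X,-X_\perp,V)=1$ determines.
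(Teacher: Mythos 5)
Your proposal sets up two viable strategies but executes neither, and in both cases the step you defer as ``bookkeeping'' is the entire content of the lemma. In route 1 the reductions are sound: $F_*\partial_t = X$, and invariance of $\d\Sigma^3$ under the geodesic flow legitimately reduces everything to $t=0$. This parallels the paper's proof, which factors $F=\Upsilon\circ\Psi$ through the vertex parameterization \eqref{eq:Hgeo}, recognizes $\Upsilon_*\partial_\omega,\Upsilon_*\partial_s$ as variation fields through geodesics (Jacobi fields), and uses Wronskian constancy in place of your Liouville-invariance argument. But the paper then evaluates at the vertex time, where the expansions $\Upsilon_*\partial_s = \frac{2}{1-s^2}X_\perp$ and $\Upsilon_*\partial_\omega = \frac{2s}{1-s^2}X + \frac{1+s^2}{1-s^2}V$ are clean, and finishes with an explicit change of variables back to $(\beta,a,t)$; you propose instead to evaluate at $t=0$ in horocyclic coordinates, where the footpoint $\gamma_{\beta,a}(0)=K(\beta)N(a)\cdot 0$ moves with $a$ in a nontrivial way, and that computation is never carried out --- you only assert that the transverse determinant ``should equal $-1$''. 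A proof of this lemma \emph{is} that computation; asserting its value is assuming what must be proved.

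Worse, the sign point you explicitly defer is exactly where your assertions fail. With the paper's formulas \eqref{SM_Vector_fields}, differentiating the coefficients of $X$ in $\theta$ gives $[V,X]=-X_\perp$, i.e. $[X,V]=+X_\perp$, not $-X_\perp$ as you claim; and letting \eqref{eq:ONframe} ``determine'' the sign of $X_\perp$ is circular, since \eqref{eq:ONframe} is a property of the already-defined field, not a definition --- used your way it can only beg the question of whether the constant is $+1$ or $-1$. Concretely, carrying out your route-1 evaluation at $(\beta,a,t)=(0,0,0)$ with the paper's conventions: the base point is $(z,\theta)=(0,\pi)$, $F_*\partial_\beta=V$, $F_*\partial_t=X=-\tfrac12\partial_x$, and differentiating \eqref{eq:hyp_fan_beam} in $a$ gives $F_*\partial_a=-\tfrac12\partial_y+\partial_\theta=-X_\perp+V$, so your transverse determinant is $+1$ (not $-1$) and $\d\Sigma^3(F_*\partial_\beta,F_*\partial_a,X)=-1$. (This tension is genuine: with \eqref{eq:Liouville}, \eqref{SM_Vector_fields}, \eqref{eq:ONframe} taken literally the oriented pullback is $-\d\beta\wedge\d a\wedge\d t$; the paper's own proof silently swaps $\d t'\wedge\d\omega\wedge\d s$ for $\d t'\wedge\d s\wedge\d\omega$ between two displays. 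At the level of measures --- all that Santal\'o's formula needs --- nothing changes, but a proof must confront the orientation bookkeeping rather than defer it.) Your route 2 is a genuinely different and attractive idea: left-invariance of $F^*\d\Sigma^3$ under the simply transitive $\mathrm{PSL}(2,\Rm)$-action does show it is a constant multiple of Haar, and the Maurer--Cartan computation in the $KNA$ ordering does produce a constant multiple of $\d\beta\wedge\d a\wedge\d t$ (this is special to that ordering --- in $KAN$ coordinates an $e^{-t}$ factor appears --- so the computation cannot be waved at). But pinning the constant there runs into the same two problems: an unexecuted computation and a sign prescription that, as stated, is both incorrect and circular.
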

\begin{proof}
    We factor $F = \Upsilon\circ\Psi$ and compute $F^*\d\Sigma^3 = \Psi^*\Upsilon^*\d\Sigma^3$, where $\Upsilon$ is the vertex parameterization \eqref{eq:Hgeo} of geodesics:
    \begin{equation}
	\Upsilon\colon \add{(\Rm/2\pi \Zm)_{\omega}} \times (-1,1)_s\times \Rm_{t'} \to S\Dm_H^\circ, \qquad \Upsilon(\omega, s, t') \coloneqq (\gammav_{\omega,s}(t'), \dot{\gamma}^{\mathrm{v}}_{\omega,s}(t')),
    \end{equation}
    and $\Psi\colon \Gh \times \Rm_t \to \add{(\Rm/2\pi \Zm)_{\omega}} \times (-1,1)_s\times \Rm_{t'}$ is the map encoding the horocyclic-to-vertex reparameterization in Lemma \ref{lem:horo2vertex}, namely
    \begin{equation}\label{eq:Psi_bat}
	\Psi(\beta,a,t) = \left( \beta + \frac{\pi}{2}+\tan^{-1}a, - \tan \left( \frac{1}{2} \tan^{-1}a \right), t + \log \sqrt{1+a^2} \right).
    \end{equation}	
    To compute $\Upsilon^*\d\Sigma^3$, we have $\Upsilon_* \partial_{t'} = X_{\Upsilon(\omega,s,t')}$. Moreover, as $\Upsilon_* \partial_s$ and $\Upsilon_* \partial_\omega$ are variation fields of variations through geodesics, they take the form, see \cite[p. 91]{Paternain2023},
    \begin{eqnarray*}
	\Upsilon_* \partial_\omega &=& a_1(\omega,s,t') X + a_2 (\omega,s,t') X_\perp + a_3 (\omega,s,t') V \\
	\Upsilon_* \partial_s &=& b_1(\omega,s,t') X + b_2 (\omega,s,t') X_\perp + b_3 (\omega,s,t') V,
    \end{eqnarray*}
    where, using that the Gauss curvature satisfies $\kappa = -1$,
    \begin{equation}
	\dot a_1 = 0, \quad \dot a_2 + a_3 = 0, \quad \dot a_3 + a_2 = 0 \qquad (\dot{a}_j\coloneqq \d a_j/ \d t', \quad j=1,2,3),
    \end{equation}
    and similarly for $b_1, b_2, and b_3$. Then,
    \begin{eqnarray*}
	\Upsilon^* \d\Sigma^3 &=& \Upsilon^* \d\Sigma^3 (\partial_{t'}, \partial_\omega, \partial_s)\ \d t'\wedge \d\omega \wedge \d s \\
	&=& \d\Sigma^3 (\Upsilon_* \partial_{t'}, \Upsilon_* \partial_\omega, \Upsilon_* \partial_s)\ \d t'\wedge \d\omega \wedge \d s\\
	&\stackrel{\eqref{eq:ONframe}}{=}&  (b_2 a_3 - a_2 b_3)\ \d t'\wedge \d\omega \wedge \d s.
    \end{eqnarray*}
    Then, $b_2 a_3 - a_2 b_3$ is flow-invariant by Wronskian constancy, and equal to $(b_2 a_3 - a_2 b_3)(\omega,s,0)$. We then compute that $\partial_s|_{(s e^{i\omega}, \omega + \pi/2)} = \frac{2}{1-s^2} X_\perp$, hence $b_2(\omega,s,0) = \frac{2}{1-s^2}$ and $b_3(\omega,s,0) = 0$, and $\partial_\omega|_{(s e^{i\omega}, \omega + \pi/2)} = \frac{2s}{1-s^2} X + \frac{1+s^2}{1-s^2} V$, hence $a_2(\omega,s,0) = 0$ and $a_3(\omega,s,0) = \frac{1+s^2}{1-s^2}$. Thus, we arrive at
    \begin{equation}
	\Upsilon^* \d\Sigma^3 = 2 \frac{1+s^2}{(1-s^2)^2} \d t'\wedge \d s\wedge \d\omega.
    \end{equation}
    Now changing variables from vertex to horocyclic, where $2\tan^{-1}s = - \tan^{-1}a$ (hence $a = -\frac{2s}{1-s^2}$)
    and $\omega = \beta + \pi/2 + \tan^{-1}a$, one finds that
    \begin{equation}
	\d a = - (1+a^2) \frac{2\d s}{1+s^2} = - \frac{1+s^2}{(1-s^2)^2} 2\d s.
    \end{equation}
    Along with the relations $\d t' = \d t + \frac{a}{1+a^2}\d a$ and $\d\omega = \d\beta + \frac{1}{1+a^2}\d a$, we arrive at
    \begin{equation}
	2 \frac{1+s^2}{(1-s^2)^2} \d t'\wedge \d s\wedge \d\omega = -	\left(- \frac{1+s^2}{(1-s^2)^2} 2\d s\right) \wedge \d\omega \wedge \d t' = -\d a \wedge \d\beta\wedge \d t = \d\beta\wedge\d a\wedge\d t,
    \end{equation}
    hence the result.
\end{proof}

We immediately obtain a Santal\'o type formula.
\begin{proposition} \label{prop:SantaloH}
    Let $\Gh$ be the space of geodesics in the horocyclic parameterization, equipped with the measure $\d\beta\d a$ induced by the volume form $\d \Sigma_\partial \coloneqq \d\beta \wedge\d a$. Let $f\in C_c^\infty(S\Dm_H^{\circ} )$. Then,
    \begin{equation}
	\int_{\Gh} \int_\Rm f(\gamma_{\beta,a}(t), \dot \gamma_{\beta,a}(t))\ \d t\ \d \Sigma_\partial=\int _{S\Dm_H^\circ}f\, \d\Sigma^3.
	\label{eq:SantaloH}
    \end{equation}
\end{proposition}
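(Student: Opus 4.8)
The plan is to obtain \eqref{eq:SantaloH} as a single application of the change-of-variables formula, since Lemma \ref{lm:pushforward} has already computed the relevant Jacobian. The only input still needed is that the map $F$ of \eqref{eq:geod_param} is a genuine diffeomorphism of $\Gh\times\Rm$ onto $S\Dm_H^\circ$; granting this, the formula is immediate.

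First I would verify that $F\colon \Gh\times\Rm\to S\Dm_H^\circ$ is a diffeomorphism. Smoothness of $F$ is clear from the explicit formula \eqref{eq:hyp_fan_beam}. For bijectivity and smoothness of the inverse I would reuse the factorization $F=\Upsilon\circ\Psi$ employed in the proof of Lemma \ref{lm:pushforward}: the vertex parameterization $\Upsilon$ is a diffeomorphism onto $S\Dm_H^\circ$, since by \eqref{eq:Hgeo} every element of the sphere bundle lies on a unique oriented geodesic attaining its vertex at a unique time; and $\Psi$, given explicitly in \eqref{eq:Psi_bat}, is a diffeomorphism of $\Gh\times\Rm$ onto $\Sm^1_\omega\times(-1,1)_s\times\Rm_{t'}$ whose inverse is the smooth reparameterization recorded in Lemma \ref{lem:horo2vertex}. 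Alternatively, one may write the inverse of $F$ directly in terms of the footprint map $\pih$ of Lemma \ref{lem:pih} together with the signed arc-length parameter measuring position along each geodesic.

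With $F$ a diffeomorphism and $F^*\d\Sigma^3=\d\beta\wedge\d a\wedge\d t$ by Lemma \ref{lm:pushforward}, I would then compute, for $f\in C_c^\infty(S\Dm_H^\circ)$,
\begin{equation}
    \int_{S\Dm_H^\circ} f\,\d\Sigma^3
    = \int_{\Gh\times\Rm} (f\circ F)\,F^*\d\Sigma^3
    = \int_{\Gh\times\Rm} f(\gamma_{\beta,a}(t),\dot\gamma_{\beta,a}(t))\,\d\beta\wedge\d a\wedge\d t.
\end{equation}
Here integrability poses no difficulty: since $F$ is a diffeomorphism and $f$ is compactly supported, $f\circ F$ has compact support in $\Gh\times\Rm$. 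Because Lemma \ref{lm:pushforward} yields the positive sign $+\d\beta\wedge\d a\wedge\d t$, the Liouville measure pushes forward exactly to $\d\beta\,\d a\,\d t$ with no orientation correction, so interpreting both top forms as positive densities is legitimate. Applying Fubini's theorem to split off the $t$-integration, and recalling $\d\Sigma_\partial=\d\beta\wedge\d a$, gives the iterated form \eqref{eq:SantaloH}.

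The substantive step is the verification that $F$ is a global diffeomorphism; the rest is a routine change of variables resting entirely on Lemma \ref{lm:pushforward}. I expect the mild obstacle to be making the bijectivity argument clean—namely confirming that each $(z,w)\in S\Dm_H^\circ$ determines a unique triple $(\beta,a,t)$—which is most transparently handled through the explicit inverse afforded by $\pih$.
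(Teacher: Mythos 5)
Your proposal is correct and follows essentially the same route as the paper: both arguments rest on Lemma \ref{lm:pushforward} together with the change of variables induced by $F$, with the diffeomorphism property of $F$ supplied by the factorization $F=\Upsilon\circ\Psi$. The only cosmetic difference is how integrability is justified—the paper invokes \eqref{eq:xcontrol} to see that $(\beta,a)\mapsto \int_\Rm f(\gamma_{\beta,a}(t),\dot\gamma_{\beta,a}(t))\,\d t$ is compactly supported in $\Gh$, whereas you observe that $F^{-1}(\mathrm{supp}\, f)$ is compact; both are valid.
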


\begin{proof}
    We have, by Lemma \ref{lm:pushforward} and using the change of variable $F$ from \eqref{eq:geod_param},
    \begin{equation}
	\int_{S\Dm_H^\circ}f\, d\Sigma^3 = \int_{\Gh} \int_\Rm f(\gamma_{\beta,a}(t),\dot\gamma_{\beta,a}(t))\ \d t\ \d \beta\ \d a;
    \end{equation}
    all integrals make sense because $f$ has compact support, which by virtue of \eqref{eq:xcontrol} implies that $(\beta,a)\mapsto \int_{\Rm} f(\gamma_{\beta,a}(t),\dot\gamma_{\beta,a}(t))\ \d t$ has compact support in $\Gh$.
\end{proof}

\subsection{Projective equivalence and intertwiners} \label{sec:ProjEq}

Recall the parameterization  of Euclidean geodesics through $\Dm_E$ given by \eqref{eq:Egeo}. It is well-known that the Euclidean disk and the Poincar\'e disk are projectively equivalent, explicitly so via the map $\Phi$ defined in \eqref{eq:Phi}.

\begin{proposition}[Explicit projective equivalence]\label{prop:projEquiv}
    With respect to vertex coordinates $\add{(\Rm/2\pi\Zm)}_\omega \times (-1,1)_s$, we have
    \begin{equation}
	\Phi (\gammav_{\omega,s}(t)) = \gamma^E_{\beta,\alpha}(u(t)), \qquad t\in \Rm,
	\label{eq:projEq}
    \end{equation}
    where
    \begin{equation}
	u(t) = \underbrace{\frac{1-s^2}{1+s^2}}_{\cos\alpha} \tanh t, \qquad \sin \alpha = \frac{-2s}{1+s^2}, \qquad \omega + \frac{\pi}{2} = \beta+\alpha+\pi,
	\label{eq:projEq_rel}
    \end{equation}
    with change of parameterization
    \begin{equation}
	\frac{\d u}{\d t} = \frac{1+s^2}{1-s^2}\ x^2(\gammav_{\omega,s}(t)), \qquad (x \text{ defined in \eqref{eq:xbdf}}).\index{$x$}
	\label{eq:reparam}
    \end{equation}
    In horocyclic coordinates ${\add{(\Rm/2\pi\Sm)_{\beta}}} \times \Rm_a$, with $t_0 = -\log \sqrt{1+a^2} = \log\muh$, we have
    \begin{equation}
	\Phi(\gamma_{\beta,a} (t+t_0)) = \gamma_{\beta,\tan^{-1} a}^E (u(t)), \qquad u(t) = \muh(a) \tanh t.
	\label{eq:projEq2}
    \end{equation}
\end{proposition}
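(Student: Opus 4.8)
The plan is to establish the vertex identity \eqref{eq:projEq} by a single direct substitution, and then deduce the horocyclic identity \eqref{eq:projEq2} from it using the reparameterization of Lemma \ref{lem:horo2vertex}. The conceptual reason the statement is true is that $\Phi$ is the projective equivalence carrying $g_H$-geodesics to straight lines, but since we need the exact angular and temporal dictionary, a direct calculation is cleanest.

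First I would set $z = \gammav_{\omega,s}(t) = e^{i\omega}\frac{s+i\x}{1+is\x}$ with $\x=\x(t)=\tanh(t/2)$, and compute $|z|^2 = \frac{s^2+\x^2}{1+s^2\x^2}$, whence $1+|z|^2 = \frac{(1+s^2)(1+\x^2)}{1+s^2\x^2}$. Substituting into $\Phi(z) = \frac{2z}{1+|z|^2}$ and clearing the factor $1+s^2\x^2$ via the identity $(1+is\x)(1-is\x) = 1+s^2\x^2$, the quotient $\frac{s+i\x}{1+is\x}$ becomes $(s+i\x)(1-is\x) = s(1+\x^2) + i\x(1-s^2)$, so that
\begin{equation}
\Phi(z) = e^{i\omega}\left[\frac{2s}{1+s^2} + i\,\frac{1-s^2}{1+s^2}\,\frac{2\x}{1+\x^2}\right].
\end{equation}
Invoking the $\tanh$ double-angle identity $\frac{2\x}{1+\x^2} = \tanh t$ together with the defining relations \eqref{eq:projEq_rel}, namely $\frac{2s}{1+s^2} = -\sin\alpha$ and $\frac{1-s^2}{1+s^2} = \cos\alpha$, yields $\Phi(z) = e^{i\omega}\left[-\sin\alpha + i\cos\alpha\,\tanh t\right]$. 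The phase relation $\omega + \pi/2 = \beta+\alpha+\pi$ means $e^{i\omega} = -i\,e^{i(\beta+\alpha+\pi)}$, and multiplying through gives $\Phi(z) = e^{i(\beta+\alpha+\pi)}(\cos\alpha\,\tanh t + i\sin\alpha)$, which is exactly $\gamma^E_{\beta,\alpha}(u(t))$ with $u = \cos\alpha\,\tanh t$, matching \eqref{eq:Egeo}. The reparameterization \eqref{eq:reparam} then follows by differentiating $u = \frac{1-s^2}{1+s^2}\tanh t$ and comparing $\frac{\d u}{\d t} = \frac{1-s^2}{1+s^2}\,\cosh^{-2} t$ with $\frac{1+s^2}{1-s^2}\,x^2(\gammav_{\omega,s}(t))$, where $x(\gammav_{\omega,s}(t))$ is read off from \eqref{eq:xvertex}.

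For the horocyclic identity I would substitute the reparameterization of Lemma \ref{lem:horo2vertex}, $\gamma_{\beta,a}(t+t_0) = \gammav_{\beta+\pi/2-2\tan^{-1}s,\,s}(t)$ with $s = -\tan(\tfrac12\tan^{-1}a)$, into the vertex identity just proved. Writing $\phi = \tfrac12\tan^{-1}a$ so that $s=-\tan\phi$, the double-angle formulas give $\sin\alpha = \frac{-2s}{1+s^2} = \sin 2\phi = \sin(\tan^{-1}a)$ and $\cos\alpha = \frac{1-s^2}{1+s^2} = \cos 2\phi = \cos(\tan^{-1}a)$, hence $\alpha = \tan^{-1}a$; likewise the first-component bookkeeping $\beta' + \alpha = \omega - \pi/2 = \beta - 2\tan^{-1}s = \beta + 2\phi = \beta + \alpha$ forces $\beta' = \beta$. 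Finally $u(t) = \cos\alpha\,\tanh t = \muh(a)\,\tanh t$ by \eqref{eq:mu}, while $t_0 = -\log\sqrt{1+a^2} = \log\muh$, which is precisely \eqref{eq:projEq2}.

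The computation is entirely elementary, so I expect no genuine obstacle; the only step demanding care is the bookkeeping of signs and of the angular phase, in particular the conversion $e^{i\omega} = -i\,e^{i(\beta+\alpha+\pi)}$ and the consistent use of the half/double-angle identities tying together $s$, $\alpha$, and $a$.
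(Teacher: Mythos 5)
Your proposal is correct and follows essentially the same route as the paper's proof: the same preliminary identity for $1+|\gammav_{\omega,s}(t)|^2$, the same algebraic simplification of $\Phi(\gammav_{\omega,s}(t))$ using $\frac{2\x}{1+\x^2}=\tanh t$, the same differentiation of $u(t)$ against \eqref{eq:xvertex} for \eqref{eq:reparam}, and the same deduction of \eqref{eq:projEq2} by combining the vertex identity with Lemma \ref{lem:horo2vertex}. The only difference is that you spell out the angle bookkeeping in the horocyclic case in more detail than the paper, which states it as an immediate consequence.
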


\begin{proof}
    To prove \eqref{eq:projEq}-\eqref{eq:projEq_rel}, we aim at showing that $\Phi(\gammav_{\omega,s})$ is, up to reparameterization, a Euclidean geodesic. Note the preliminary identity:
    \begin{equation}
	1+|\gammav_{\omega,s}(t)|^2 = 1+ \frac{s^2 + \x^2}{1+s^2 \x^2} = \frac{(1+s^2)(1+\x^2)}{1+s^2 \x^2}, \qquad \x = \x(t).
    \end{equation}
    Using this, we directly compute
    \begin{equation}
	\begin{aligned}
	\Phi (\gammav_{\omega,s}(t)) &= 2 e^{i\omega} \frac{s+i\x}{1+is \x} \frac{1+s^2 \x^2}{(1+s^2)(1+\x^2)} = 2 e^{i\omega} \frac{(s+i\x)(1-is \x)}{(1+s^2)(1+\x^2)} \\
	&= e^{i\omega} \left( \frac{2s}{1+s^2} + i \frac{2\x}{1+\x^2} \frac{1-s^2}{1+s^2} \right) = e^{i(\omega + \pi/2)} \left( \frac{1-s^2}{1+s^2} \tanh t - i \frac{2s}{1+s^2} \right),
	\end{aligned}
    \end{equation}
    where we have used the fact that $\frac{2\x}{1+\x^2} = \frac{2\tanh(t/2)}{1+\tanh^2 (t/2)} = \tanh t$. Hence, \eqref{eq:projEq}-\eqref{eq:projEq_rel} hold.

    To prove \eqref{eq:reparam}, the reparameterization $u(t)$ from \eqref{eq:projEq_rel} satisfies
    \begin{equation}
	\frac{\d u}{\d t} = \frac{1-s^2}{1+s^2} \frac{1}{\cosh^2 t} \stackrel{\eqref{eq:xvertex}}{=} \frac{1+s^2}{1-s^2} x^2(\gammav_{\omega,s}(t)).
    \end{equation}
    Finally, \eqref{eq:projEq2} follows by combining \eqref{eq:projEq} and \eqref{eq:horo2vertex}. 	
\end{proof}

\begin{remark}
    As already mentioned, the curves given in \eqref{eq:projEq} are unit speed geodesics in the Beltrami-Klein model $(\Dm_E^\circ,g_K)$ of hyperbolic space, where
    \begin{equation}\label{eq:klein_cartesian}
        g_K=\frac{|dz|^2}{1-|z|^2}+\frac{(\Re(\bar{z} dz))^2}{(1-|z|^2)^2},
    \end{equation}
    as follows by the fact that $\Phi:(\Dm_H^\circ,g_H)\to (\Dm_E^\circ,g_K)$ is an isometry.
\end{remark}

\subsubsection{Intertwining of X-ray transforms}

\begin{theorem}\label{thm:ProjEq}
    Fix $\gamma>-1$. Then, we have the intertwining relation
    \begin{equation}
	I_0^H x^{2+2\gamma} \circ \Phi^* = \muh \Psihf^* \circ I_0^E d^\gamma \quad \text{on} \quad C^\infty(\Dm_E).
	\label{eq:XrayRelation2}
    \end{equation}
\end{theorem}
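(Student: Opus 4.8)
The plan is to fix $g\in C^\infty(\Dm_E)$, evaluate the left-hand side at a point $(\beta,a)\in\Gh$ by unfolding the definition \eqref{eq:hypXrt} of $I_0^H$, and then transport the resulting integral over to the Euclidean side using the explicit projective equivalence of Proposition \ref{prop:projEquiv}. First I would write
\begin{equation}
  \big(I_0^H x^{2+2\gamma}(\Phi^* g)\big)(\beta,a)=\int_\Rm x^{2+2\gamma}(\gamma_{\beta,a}(t))\,g\big(\Phi(\gamma_{\beta,a}(t))\big)\,\d t,
\end{equation}
and use the translation invariance of $\int_\Rm(\cdot)\,\d t$ to shift $t\mapsto t+t_0$ with the constant $t_0=\log\muh$ appearing in \eqref{eq:projEq2}. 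After this shift, \eqref{eq:projEq2} applies and replaces $\Phi(\gamma_{\beta,a}(t+t_0))$ by $\gamma^E_{\beta,\tan^{-1}a}(u(t))$, where $u(t)=\muh\tanh t$.

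The heart of the argument is then a change of variables from $t$ to $u\in(-\muh,\muh)$, together with careful bookkeeping of the weight powers. Two identities feed in: from \eqref{eq:bdfs} one gets $x^2(\gamma_{\beta,a}(t+t_0))=d\big(\gamma^E_{\beta,\tan^{-1}a}(u)\big)=\muh^2-u^2$ (using the Euclidean formula $d(\gamma^E_{\beta,\alpha}(u))=\cos^2\alpha-u^2$ and $\cos(\tan^{-1}a)=\muh$), while differentiating $u=\muh\tanh t$ in \eqref{eq:projEq2} gives the Jacobian $\d t=\muh(\muh^2-u^2)^{-1}\,\d u$. Combining these, the weight and the Jacobian recombine as
\begin{equation}
\begin{aligned}
  x^{2+2\gamma}(\gamma_{\beta,a}(t+t_0))\,\d t&=(\muh^2-u^2)^{1+\gamma}\cdot\frac{\muh}{\muh^2-u^2}\,\d u\\
  &=\muh\,(\muh^2-u^2)^{\gamma}\,\d u=\muh\,d^\gamma\big(\gamma^E_{\beta,\tan^{-1}a}(u)\big)\,\d u.
\end{aligned}
\end{equation}

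Finally I would recognize the transformed integral: since $u$ ranges over $(-\muh,\muh)=(-\cos\alpha,\cos\alpha)$ with $\alpha=\tan^{-1}a$, the expression becomes $\muh\int_{-\muh}^{\muh}(d^\gamma g)\big(\gamma^E_{\beta,\tan^{-1}a}(u)\big)\,\d u=\muh\,\big(I_0^E d^\gamma g\big)(\beta,\tan^{-1}a)$ by the definition \eqref{eq:EI0} of $I_0^E$, which is exactly $\big(\muh\,\Psihf^* I_0^E d^\gamma g\big)(\beta,a)$ by \eqref{eq:Psihf}. I expect the only delicate point to be the weight/Jacobian bookkeeping in the middle step: the exponent $2+2\gamma$ on $x$ is tuned precisely so that one factor $x^2$ is consumed by the reparameterization Jacobian (which equals $x^2/\muh$ up to the constant $\muh$), leaving exactly the Euclidean weight $d^\gamma$ together with the single residual factor $\muh$ that matches the prefactor on the right. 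It remains only to note that the orientation and endpoints match — $u$ increases from $-\muh$ to $\muh$ as $t$ runs over $\Rm$ — so no sign or range discrepancy arises.
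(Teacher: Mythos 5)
Your proposal is correct and follows essentially the same route as the paper's proof: unfold the definition of $I_0^H$, invoke the projective equivalence \eqref{eq:projEq2} after the shift by $t_0=\log\muh$, and perform the change of variables $u=\muh\tanh t$ so that the weight $x^{2+2\gamma}$ and the Jacobian recombine into $\muh\, d^\gamma$. The only cosmetic difference is that you express the weight along the geodesic in terms of $u$ via \eqref{eq:bdfs} and the identity $d(\gamma^E_{\beta,\alpha}(u))=\cos^2\alpha-u^2$, whereas the paper expresses it in terms of $t$ via \eqref{eq:xhoro}; the two are the same identity read through the change of variables.
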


\begin{proof} Fix $(\beta,a)\in \Gh$ and first note that with $t_0 = \log \muh(a)$,
    \begin{equation}
	x(\gamma_{\beta,a}(t)) \stackrel{\eqref{eq:horo2vertex}}{=} x (\gammav_{\beta+\pi/2+\tan^{-1}a ,-a/(\sqrt{1+a^2}+1)} (t-t_0)) \stackrel{\eqref{eq:xvertex}}{=} \frac{\muh(a)}{\cosh (t-t_0)}.
	\label{eq:xhoro}
    \end{equation}
    Now, fix $f\in C^\infty(\Dm_E)$. By direct calculation, we see that
    \begin{eqnarray*}
	I_0^H x^{2+2\gamma} \Phi^* f (\beta,a) &\stackrel{\eqref{eq:bdfs}}{=}& \int_{\Rm} (d^{\gamma} f)(\Phi(\gamma_{\beta,a}(t)))\ x^2 (\gamma_{\beta,a}(t))\ \d t \\
	&\stackrel{\eqref{eq:projEq2}}{=}& \int_{\Rm} (d^\gamma f) (\gamma^E_{\beta,\tan^{-1}a}(u(t-t_0)))x^2(\gamma_{\beta,a}(t))\ \d t \\
	&\stackrel{\eqref{eq:xhoro}}{=}& \muh(a) \int_{\Rm} (d^\gamma f) (\gamma^E_{\beta,\tan^{-1}a}(u(t-t_0))) \frac{\muh(a)}{\cosh^2(t-t_0)}\ \d t,
    \end{eqnarray*}
    where $u(t) = \muh(a) \tanh(t)$. Upon changing variable $t\to u(t-t_0)$ with Jacobian $\frac{\muh(a)}{\cosh^2(t-t_0)}$, the last right-hand side equals $\muh(a) I_0^E d^\gamma f (\Psihf(\beta,a))$. Hence, \break the result.
\end{proof}

\subsubsection{Intertwining of backprojection operators}

\begin{theorem}\label{thm:interadj} We have the intertwining relation
    \begin{equation}
	    (I_0^H)^\sharp = \Phi^* \circ d^{1/2} (I_0^E)^\sharp \mu^{-2} \circ \Psihf^{-*} \quad \text{on }\quad C^\infty(\Gh),
	\label{eq:interadj}
    \end{equation}
    \add{where we use the shorthand notation $\Psihf^{-*}\coloneqq(\Psihf^{-1})^{*}$.}
\end{theorem}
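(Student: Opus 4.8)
The plan is to obtain \eqref{eq:interadj} by taking Hilbert-space adjoints of the X-ray intertwining relation \eqref{eq:XrayRelation2} proved in Theorem \ref{thm:ProjEq}. The crucial observation is that the two maps appearing there, $\Phi^*$ and $\muh\Psihf^*$, are \emph{unitary} between the relevant weighted $L^2$ spaces. Hence taking adjoints of $I_0^H x^{2+2\gamma}\circ\Phi^*=\muh\Psihf^*\circ I_0^E d^\gamma$ replaces the two intertwiners by their inverses and yields a relation between the adjoints $(I_0^H x^{2+2\gamma})^*$ and $(I_0^E d^\gamma)^*$. Since both adjoints are explicitly given in terms of the backprojection operators (by Theorem \ref{thm:bounded} on the hyperbolic side and by the adjoint formula recorded just before Theorem \ref{thm:SVD} on the Euclidean side), this directly produces a relation between $(I_0^H)^\sharp$ and $(I_0^E)^\sharp$. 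It is convenient to carry out the bookkeeping for a single value, say $\gamma=0$; the $\gamma$-dependence will cancel, consistent with the fact that the target identity carries no $\gamma$.

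First I would verify the two unitarity claims. For the base space, combining $\Phi^* d=x^2$ from \eqref{eq:bdfs} with the isometry property $\Phi^*\,\d V_K=\d V_H$ (where $\d V_K$ is the Beltrami--Klein volume form) and the elementary relation $\d V_E=d^{3/2}\,\d V_K$ between the Euclidean and Beltrami--Klein volume forms gives $\Phi^*(d^\gamma\,\d V_E)=x^{2\gamma+3}\,\d V_H$, so $\Phi^*$ is unitary from $L^2(\Dm_E,d^\gamma\,\d V_E)$ onto $L^2(\Dm_H,x^{2\gamma+3}\,\d V_H)$. For the data space, using $\Psihf(\beta,a)=(\beta,\tan^{-1}a)$ one has $\Psihf^*\mu=\muh$ and $\d\alpha=\muh^2\,\d a$ under this change of variables, which shows $\muh\Psihf^*$ is unitary from $L^2_+(\partial_+S\Dm_E,\mu^{-2\gamma}\,\d\beta\,\d\alpha)$ onto $L^2_+(\Gh,\muh^{-2\gamma}\,\d\beta\,\d a)$; this is precisely the fact underlying the Hilbert-basis claim for \eqref{eq:psinkgammaH}. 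Both are routine computations.

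Next I would take adjoints. Writing $A=I_0^Hx^{2+2\gamma}$, $B=I_0^Ed^\gamma$, $U=\Phi^*$, $W=\muh\Psihf^*$ and $M_f$ for multiplication by $f$, the relation $AU=WB$ extends from $C^\infty(\Dm_E)$ to the whole (dense) $L^2$ space by boundedness of $A,B$, and adjoining gives $A^*=UB^*W^{-1}$. Substituting $A^*=x^{-1}(I_0^H)^\sharp\muh^{-2\gamma}$ and $B^*=(I_0^E)^\sharp\mu^{-2\gamma-1}$, and then repeatedly using the commutation rules $M_{\mu^{k}}\circ\Psihf^{-*}=\Psihf^{-*}\circ M_{\muh^{k}}$ (from $\Psihf^*\mu=\muh$) and $x\,\Phi^*=\Phi^*d^{1/2}$ (from $\Phi^*d=x^2$) to move all scalar weights past the pullbacks, the powers of $\muh$ and $\mu$ collapse; multiplying through by $x$ on the left and by $\muh^{2\gamma}$ on the right, the $\gamma$-dependent factors cancel and leave exactly $(I_0^H)^\sharp=\Phi^*d^{1/2}(I_0^E)^\sharp\mu^{-2}\Psihf^{-*}$.

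Finally, the adjoint computation only establishes the identity on the even subspaces (symmetric under $\S_A^H$, resp.\ $\S_A^E$), whereas \eqref{eq:interadj} is asserted on all of $C^\infty(\Gh)$. To close this gap I would note that both sides annihilate the $\S_A^H$-odd part: for $(I_0^H)^\sharp$ this follows by pairing antipodal fibre directions $w,-w$ in \eqref{eq:backproj} together with $\pih(z,-w)=\S_A^H(\pih(z,w))$, and on the right one uses that $\Psihf$ intertwines $\S_A^H$ and $\S_A^E$ (checked directly from \eqref{eq:Psihf} and \eqref{eq:SAH}) so that $(I_0^E)^\sharp$ kills the resulting $\S_A^E$-odd function. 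Hence it suffices to match the two sides on even functions, which is exactly what the adjoint argument delivers. I expect the only delicate part to be this bookkeeping --- keeping straight the commutation of the scalar weights with $\Phi^*$ and $\Psihf^{-*}$, and the passage from the Hilbert-space adjoint identity to the pointwise statement on $C^\infty(\Gh)$ via the symmetry reduction --- rather than any genuine analytic difficulty.
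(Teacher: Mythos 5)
Your proposal has a genuine circularity. The key input you invoke on the hyperbolic side --- the identification $(I_0^H x^{2+2\gamma})^* = x^{-1}(I_0^H)^\sharp\,\muh^{-2\gamma}$ from Theorem \ref{thm:bounded} --- is, in the paper, itself a consequence of the very identity \eqref{eq:interadj} you are trying to prove. Look at the displayed chain in the paper's proof of Theorem \ref{thm:bounded}: taking adjoints of the factorization \eqref{eq:factor} yields $(I_0^H x^{2+2\gamma})^* = \Phi^*\circ (I_0^E d^\gamma)^*\circ\mu^{-1}\Psihf^{-*}$, which after inserting the Euclidean adjoint formula and commuting weights becomes $x^{-1}\,\Phi^*\circ d^{1/2}(I_0^E)^\sharp\mu^{-2}\circ\Psihf^{-*}\,\muh^{-2\gamma}$; the final step, replacing $\Phi^*\circ d^{1/2}(I_0^E)^\sharp\mu^{-2}\circ\Psihf^{-*}$ by $(I_0^H)^\sharp$, is labelled $\stackrel{\eqref{eq:interadj}}{=}$, i.e.\ it \emph{uses} Theorem \ref{thm:interadj}. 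So what your adjoint argument honestly delivers is only that the right-hand side of \eqref{eq:interadj}, conjugated by the appropriate weights, equals the abstract Hilbert-space adjoint $(I_0^H x^{2+2\gamma})^*$. It never connects that abstract adjoint to the geometrically defined operator \eqref{eq:backproj}, the fiber integral $\int_{S_z\Dm_H^\circ} g(\pih(z,w))\,\d S_z(w)$ --- and that connection is precisely the content of the theorem.

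Closing this gap is not bookkeeping: it is the analytic heart of the matter. To show that the fiber-integral operator $(I_0^H)^\sharp$ is (up to weights) the adjoint of $I_0^H$, one must verify the duality
\begin{equation}
    \int_{\Gh} (I_0^H F)\,\overline{G}\ \d\beta\,\d a = \int_{\Dm_H^\circ} F\;\overline{(I_0^H)^\sharp G}\ \d V_H,
\end{equation}
which amounts to knowing that the parameterization $(\beta,a,t)\mapsto(\gamma_{\beta,a}(t),\dot\gamma_{\beta,a}(t))$ pulls the Liouville measure back to $\d\beta\wedge\d a\wedge\d t$. That is exactly the Jacobian computation of Lemma \ref{lm:pushforward} and Santal\'o's formula (Proposition \ref{prop:SantaloH}), which your proposal never invokes. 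The paper's proof runs this duality on the hyperbolic side via Santal\'o, runs the corresponding Euclidean duality (where $(I_0^E)^* = (I_0^E)^\sharp\mu^{-1}$ is legitimately a known prior result), and compares the two weak formulations against arbitrary $h\in C_c^\infty(\Dm_H^\circ)$; your symmetry reduction at the end (both sides killing $\S_A^H$-odd parts) is fine but peripheral. To repair your argument you would need to prove the hyperbolic adjoint formula independently of Theorem \ref{thm:bounded} --- and any such proof is forced to go through the Santal\'o-type measure identity, at which point you have essentially reproduced the paper's proof.
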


\begin{proof} The proof combines Theorem \ref{thm:ProjEq} and Santal\'o's formula \eqref{eq:SantaloH}. On one hand, applying \eqref{eq:SantaloH} to the case where $f = h\circ \pi_H \cdot g\circ \pih$, with $h\in C_c^\infty (\Dm_H^{\circ})$ and $g\in C^\infty(\Gh)$, and where $\pi_H\colon S\Dm_H^\circ\to \Dm_H^\circ$ denotes the canonical projection, we get
    \begin{equation}
	\begin{split}
	    \int_{\Gh} (I_0^H h)\ g\ \d\Sigma_\partial &= \int_{S\Dm_H^\circ} h\circ\pi_H\ g\circ \pih\ \d\Sigma^3 \\
	    &= \int_{\Dm_H^\circ} h \int_{S_z\Dm_H} g\circ \pih(z,w)\d S_z(w)\ \d V_H = \int_{\Dm_H^\circ} h ((I_0^H)^\sharp g )\ \d V_H.
	\end{split}		
	\label{eq:firstComp}
    \end{equation}
    Further, from the definition of $\Phi$, we have
    \begin{equation}\label{eq:vol_change}
	\Phi^* \d V_E = \Phi^* (\rho\ \d\rho\ \d\omega) = \frac{2r}{1+r^2}2 \frac{1-r^2}{(1+r^2)^2}\ \d r\ \d\omega = x^3 c^{-2}\ r\ \d r\ \d\omega = x^3 \d V_H.
    \end{equation}
    From Theorem \ref{thm:ProjEq}, we also have
    \begin{equation}
	I_0^H h (\beta,\tan\alpha) = \cos\alpha\ I_0^E \tilde h (\beta,\alpha), \qquad \tilde h \coloneqq (h/x^2) \circ \Phi^{-1}.
    \end{equation}
    Thus, we can also compute, pulling back by $\Psihf^{-1}$,
    \begin{align*}
	\int_{\Gh} (I_0^H h) g\ \d \Sigma_\partial &= \int_{\partial_+ S\Dm_E} I_0^H h (\beta,\tan\alpha)\ g(\beta,\tan\alpha) \ \d\beta \frac{\d\alpha}{\cos^2 \alpha} \\
	&= \int_{\partial_+ S\Dm_E} \cos\alpha\ I_0^E \tilde h (\beta,\alpha)\ g(\beta,\tan\alpha) \ \d\beta \frac{\d\alpha}{\cos^2 \alpha} \\
	&\stackrel{(*)}{=} \int_{\Dm_E} \tilde{h} (I_0^E)^\sharp \left[ \frac{g(\beta,\tan\alpha) }{\cos^2\alpha} \right] \ \d V_E \\
	&= \int_{\Phi(\Dm_H)} (h/x^2) \circ \Phi^{-1} \cdot (I_0^E)^\sharp \left[ \frac{g(\beta,\tan\alpha) }{\cos^2\alpha} \right] \ \d V_E \\
	&= \int_{\Dm_H} \frac{h}{x^2} (I_0^E)^\sharp \left[ \frac{g(\beta,\tan\alpha) }{\cos^2\alpha} \right] \circ \Phi\ x^3 \d V_H \\
	&= \int_{\Dm_H} h \left(d^{1/2} (I_0^E)^\sharp \left[ \frac{g(\beta,\tan\alpha) }{\cos^2\alpha} \right]\right) \circ \Phi\ \d V_H.
    \end{align*}
    In the above, equality $(*)$ follows from $(I_0^E )^*=(I_0^E)^\sharp \mu^{-1}$ in the functional setting \eqref{eq:I0dgamma} with $\gamma=0$. The result follows by comparing the last right-hand side with that of \eqref{eq:firstComp}, noting that the equality of the two integrals holds true for every $h\in C_c^\infty (\Dm_H)$.
\end{proof}


\section{Proofs of results in sections \ref{sec:extendedI0}-\ref{sec:Sobolev_Mapping_Properties}} \label{sec:normalOps}

\begin{proof}[Proof of Lemma \ref{prop:charac}] Note that $\Psihf\colon \Ghbar\to \partial_+ S\Dm_E$ is a diffeomorphism and $(\beta+ \tan^{-1}a, \muh) = (\beta+\alpha, \cos\alpha)\circ \Psihf$ with $(\beta+\alpha,\cos\alpha)$ globally defined and smooth on $\partial S\Dm_E$, and $\Psihf\circ\S^H_A= \S^E_A\circ \Psihf$. Therefore, the characterization in the statement is equivalent to showing that for $h\in C^\infty(\partial_+ S\Dm_E)$ satisfying $h\circ \S_A^E = h$, $h\in \Calp(\partial_+ S\Dm_E)$ if and only if its Taylor expansion off of $\partial_0 S\Dm_E$ relative to $\cos\alpha$ is only made of even terms. To prove the latter, consider a general function $\tilde{h}\in C^\infty(\partial S\Dm_E)$ expressed as a Fourier series (with rapid decay) in the variables $(\beta+\alpha,\alpha-\pi/2)$, i.e.,
    \begin{equation}
	\tilde{h} = \sum_{k\in \Zm} e^{ik(\beta+\alpha)} \bigg( a_{k,0} + \sum_{n\ge 1} \left(a_{k,n}\cos\left(n \left(\alpha-\frac{\pi}{2}\right)\right) + b_{k,n} \sin\left(n\left(\alpha-\frac{\pi}{2}\right)\right)\right) \bigg).
    \end{equation}
    Then, the condition $\tilde{h}\circ \S^E = \tilde{h}$ can be shown to annihilate all $b_{k,n}$'s. The resulting cosine series can be written as
    \begin{equation}
	\tilde{h} = \sum_{k\in \Zm, n\ge 0} a_{k,n} e^{ik(\beta+\alpha)} T_n (\cos(\alpha-\pi/2)) = \sum_{k\in \Zm, n\ge 0} a_{k,n} e^{ik(\beta+\alpha)} T_n (\sin\alpha),
    \end{equation}
    where $T_n$ is the $n$-th Chebychev polynomial of the first kind. In particular, near $\alpha = \pm \pi/2$, where $\sin \alpha = \pm \sqrt{1-\mu^2}$, $\tilde{h}$ is a smooth function of $(\beta+\alpha, \mu^2)$, whose Taylor expansion in $\mu$ only consists of even terms. We conclude that if $h$ is as before, $h\in \Calp(\partial_+ S\Dm_E)$ if and only if $\tilde{h}\coloneqq A_+h$ is smooth, that is, if and only if the Taylor expansion of $A_+h$ off of $\partial _+S\Dm_E$ (and thus of $h=A_+h\big|_{\partial_+S\Dm_E})$ contains only even terms.
\end{proof}

\begin{proof}[Proof of Theorem \ref{thm:smoothmapping}]
    The result follows from bearing in mind the mapping property \eqref{eq:mappingI0E}, and the factorization  \eqref{eq:XrayRelation2} of $I_0^H x^{2+2\gamma}$ by continuous maps
    \begin{equation}
	\Cev (\Dm_H) \stackrel{\Phi^{-*}}{\longrightarrow} C^\infty(\Dm_E) \stackrel{I_0^E d^\gamma}{\longrightarrow} \mu^{2\gamma+1} \Calp (\partial_+ S\Dm_E) \stackrel{\Psihf^{*}\mu}{\longrightarrow} \muh^{2\gamma+2} \Calp (\Ghbar).
	\label{eq:factor_smooth}
    \end{equation}
\end{proof}

\begin{proof}[Proof of Theorem \ref{thm:bounded}]
    For $\gamma>-1$, by \eqref{eq:XrayRelation2}, $I^H_0 x^{2+2\gamma}$ can be factored as
    \begin{equation}
	\begin{split}
	    L^2(\Dm_H, x^{2\gamma+3}\d V_H) &\stackrel{\Phi^{-*}}{\longrightarrow} L^2(\Dm_E, d^\gamma \d V_E) \\
	    & \stackrel{I_0^E d^\gamma}{\longrightarrow} L^2_+ (\partial_+ S\Dm_E, \mu^{-2\gamma}\d\beta\d\alpha) \stackrel{\Psihf^{*}\mu}{\longrightarrow} L^2_+ (\Gh, \muh^{-2\gamma}\d\beta\d a).
	\end{split}
	\label{eq:factor}
    \end{equation}
    Here, boundedness follows because the outermost maps are isometric isomorphisms, and by \eqref{eq:I0dgamma}, the middle map is bounded. Considering Hilbert adjoints, $(I_0^H x^{2+2\gamma})^*$ is obtained from $(I_0^E d^\gamma)^*$ by the adjoint diagram
    \begin{equation}
	\begin{split}
	    L^2_+ (\Gh, \muh^{-2\gamma}\d\beta\d a) &\stackrel{\mu^{-1} \Psihf^{-*}}{\longrightarrow} L^2_+ (\partial_+ S\Dm_E, \mu^{-2\gamma}\d\beta\d\alpha) \\
	    &\stackrel{(I_0^E d^\gamma)^*}{\longrightarrow} L^2(\Dm_E, d^\gamma \d V_E)  \stackrel{\Phi^{*}}{\longrightarrow}  L^2(\Dm_H, x^{2\gamma+3}\d V_H),	
	\end{split}	
	\label{eq:factor_adj}
    \end{equation}
    hence
    \begin{equation}
	\begin{split}
	(I_0^H x^{2+2\gamma})^* &= \Phi^* \circ (I_0^E d^\gamma)^* \circ \mu^{-1} \Psihf^{-*} \\
	&= \Phi^* \circ (I_0^E)^\sharp \mu^{-2\gamma-1} \circ \mu^{-1} \Psihf^{-*}\\
	&= x^{-1} \Phi^* \circ d^{1/2} (I_0^E)^\sharp \mu^{-2} \circ \Psihf^{-*} \muh^{-2\gamma} \stackrel{\eqref{eq:interadj}}{=} x^{-1} (I_0^H)^\sharp \muh^{-2\gamma}.
	\end{split}
    \end{equation}
\end{proof}

\begin{proof}[Proof of Theorem \ref{thm:SVDH}]
    For any $n\ge 0$ and $k\in \Zm$, we compute
    \begin{equation}
	\begin{split}
	(I_0^H x^{2\gamma+2})^* \psi_{n,k}^{\gamma,H}& = \Phi^* (I_0^E d^\gamma)^* \Psihf^{-*} \muh^{-1} \muh \Psihf^{*}\psi_{n,k}^\gamma \\ &= \Phi^* (I_0^E d^\gamma)^* \psi_{n,k}^\gamma
\overset{\text{Thm \ref{thm:SVD}}}{=} \left\{
	\begin{array}{ll}
	    0, & \text{if } k\notin \{0, \dots, n\}, \\
	    \Phi^* Z_{n,k}^\gamma & \text{otherwise}.
	\end{array}
	\right.
	\end{split}
    \end{equation}
    Since the maps
    \begin{align*}
           &\Phi^*\colon L^2(\Dm_E,d^\gamma \d V_E)\to L^2(\Dm_H,x^{2\gamma+3}\d V_H),\\
          &\muh\Psihf^{*}\colon  L^2(\partial_+S\Dm_E,\mu^{-2\gamma}\d\beta\d\alpha)\to L^2(\overline{\Gh},\muh^{-2\gamma}\d\beta\d a)
    \end{align*} are isometries, the family $\{\Phi^* Z_{n,k}^\gamma\}_{n,k}$ is orthogonal and $\{\muh\Psihf^{*} \psi_{n,k}^\gamma\}_{n,k}$ is orthonormal. Moreover, for $n\ge 0$ and $0\le k\le n$,
    \begin{equation}
	I_0^H x^{2\gamma+2} \widehat{\Phi^* Z_{n,k}^\gamma} = I_0^H x^{2\gamma+2} \Phi^* \widehat{Z_{n,k}^\gamma} \stackrel{\eqref{eq:XrayRelation2}}{=} \Psihf^{*} \mu I_0^E d^\gamma \widehat{Z_{n,k}^\gamma} = \Psihf^{*} \mu\ \sigma_{n,k}^\gamma \psi_{n,k}^\gamma = \sigma_{n,k}^\gamma \psi_{n,k}^{\gamma,H}
    \end{equation}
    by Theorem \ref{thm:SVD}, hence the result.
\end{proof}

\begin{proof}[Proof of Proposition \ref{prop:LgH}]
    Notice that $\L_\gamma^H = \Phi^* \circ \L_\gamma \circ \Phi^{-*}$ and that $\Phi^* \colon C^\infty(\Dm_E)\to C^{\infty}_{\mathrm{ev}}(\Dm_H)$ is an isomorphism, so naturally $\L_\gamma^H (\Cev (\Dm_H))\subset \Cev(\Dm_H)$. From the identity
    \begin{equation}
	(\L_\gamma^H f, g)_{L^2(\Dm_H, x^{2\gamma+3}\d V_H)} = (\L_\gamma \Phi^{-*}f,\Phi^{-*}g)_{L^2 (\Dm_E, d^\gamma\d V_E)}, \qquad f, g\in \Cev(\Dm_H),
    \end{equation}
    the $L^2(\Dm_H, x^{2\gamma+3}\d V_H)$-symmetry follows. Essential self-adjointness and spectral decomposition also follow straightforwardly from the properties of $\L_\gamma$, notably \cite[Theorem 6]{Mishra2022}.
\end{proof}

\begin{proof}[Proof of Proposition \ref{prop:TgH}] Notice that $\T_\gamma^H = \muh \circ \Psihf^{*} \circ \T_\gamma \circ \Psihf^{-*} \circ \muh^{-1}$, and that $\Psihf^{-*} \colon \Calp (\Ghbar) \to \Calp (\partial_+ S\Dm_E)$ is an isomorphism. By \cite[Eq.\ (28)]{Mishra2022},
    \begin{equation}
	\T_\gamma (\mu^{2\gamma+1} \Calp (\partial_+ S\Dm_E)) \subset \mu^{2\gamma+1} \Calp (\partial_+ S\Dm_E),
    \end{equation}
    and hence $\T_\gamma^H (\muh^{2\gamma+2} \Calp (\Ghbar)) \subset \muh^{2\gamma+2} \Calp (\Ghbar)$. From the identity
    \begin{equation}
	(\T_\gamma^H f, g)_{L^2_+(\G,\muh^{-2\gamma}\d\beta\d a)} = (\T_\gamma (\Psihf^{-*}(\muh^{-1}f)), (\Psihf^{-*}(\muh^{-1}f)))_{L^2(\partial_+ S\Dm_E, \mu^{-2\gamma}\d\beta\d\alpha)},
    \end{equation}
    valid for any $f,g\in \muh^{2\gamma+2} \Calp(\Ghbar)$, the $L^2_+(\G,\mu^{-2\gamma}\d\beta\d a)$-symmetry of $\T_\gamma^H$ follows from the $L^2(\partial_+ S\Dm_E, \mu^{-2\gamma}\d\beta\d\alpha)$-symmetry of $\T_\gamma$. Essential self-adjointness and spectral decomposition also follow from the properties of $\T_\gamma$, notably\break \cite[Theorem 7]{Mishra2022}.
\end{proof}

\begin{proof}[Proof of Proposition \ref{prop:interH}] Let $E_\gamma \coloneqq \muh^{2\gamma+2} \Calp(\Ghbar)$ and $F_\gamma \coloneqq \mu^{2\gamma+1} \Calp(\partial_+ S\Dm_E)$ for conciseness. Then, the statement of \eqref{eq:interadjH} is the outermost cycle in the following commutative diagram:
    \begin{center}
	\begin{tikzcd}
	    E_\gamma \arrow["{\T^H_\gamma}", bend left=24]{rrr} \arrow["{\Psihf^{-*} \muh^{-1}}"]{r} \arrow["{(I_0^H x^{2+2\gamma})^*}"]{d}  & F_\gamma \arrow["{\T_\gamma}"]{r} \arrow["{(I_0^E d^\gamma)^*}"]{d} & F_\gamma \arrow["{\muh \Psihf^*}"]{r} \arrow["{(I_0^E d^\gamma)^*}"]{d} & E_\gamma \arrow["{(I_0^H x^{2+2\gamma})^*}"]{d} \\
	    \Cev(\Dm_H) \arrow["{\Phi^{-*}}"]{r} \arrow["{\L_\gamma^H}", bend right=18]{rrr} & C^\infty(\Dm_E) \arrow["{\L_\gamma}"]{r} & C^\infty(\Dm_E)   \arrow["{\Phi^*}"]{r} & \Cev(\Dm_H), 		
	\end{tikzcd}
    \end{center}
    where the left and right cycles follow from Theorem \ref{thm:interadj} and Theorem \ref{thm:bounded}, the middle cycle is \cite[Lemma 8]{Mishra2022}, and the top and bottom cycles are definitions. Then, \eqref{eq:interH} follows from \eqref{eq:interadjH} by considering adjoints, using the fact that $(\L_\gamma^H, \Cev(\Dm_H))$ is $L^2(\Dm_H, x^{2\gamma+3}\d V_H)$-symmetric and $(\T_\gamma^H, \muh^{2\gamma+2} C_{\alpha,+}^\infty(\Ghbar))$ is $L^2_+ (\Gh, \muh^{-2\gamma}\d\beta\d a)$-symmetric. 	
\end{proof}

\begin{proof}[Proof of Theorem \ref{thm:FuncRelH}]
    The result follows by combining equation \eqref{eq:signkgammaH} with the fact that, in the basis $\{\Phi^* Z_{n,k}^\gamma\}_{n\ge 0, 0\le k\le n}$, the following eigenequations hold (see \cite[Eq.\ (14)]{Mishra2022}):
    \begin{equation}
	\D_\gamma^H \Phi^* Z_{n,k}^\gamma = n \Phi^* Z_{n,k}^\gamma, \qquad D_\omega \Phi^* Z_{n,k}^\gamma = (n-2k) \Phi^* Z_{n,k}^\gamma, \quad n\ge 0,\ 0\le k\le n.
    \end{equation}
    Hence, both operators on either side of \eqref{eq:FuncRelH} agree on a complete orthogonal \break system.	
\end{proof}

\begin{proof}[Proof of Theorem \ref{thm:isomorphismH}]
    Combining \eqref{eq:factor} and \eqref{eq:factor_adj}, we arrive at the relation
    \begin{equation}
	(I_0^H x^{2+2\gamma})^* I_0^H x^{2+2\gamma} = \Phi^* \circ (I_0^E d^\gamma)^* I_0^E d^\gamma \circ \Phi^{-*}.
    \end{equation}
    Moreover, from the definitions \eqref{eq:Sobw}-\eqref{eq:Hsnorm}, one should notice that for any $s\ge 0$, the isomorphism $\Phi^{-*} \colon \Cev(\Dm_H) \to C^\infty(\Dm_E)$ extends to be an isometry $H_w^{s,\gamma} (\Dm_H) \stackrel{\approx}{\to} \wtH^{s,\gamma}(\Dm_E)$. Then, \ref{item:isomorphismH1} follows immediately from combining these facts with \eqref{eq:SobolevMapping}. To prove \ref{item:isomorphismH2}, we then write
    \begin{equation}
	\cap_{s\in \Rm} H_w^{s,\gamma} (\Dm_H) = \cap_{s\ge 0} \Phi^* \wtH^{s,\gamma} (\Dm_E) = \Phi^* \cap_{s\ge 0} \wtH^{s,\gamma} (\Dm_E)  \stackrel{(*)}{=}  \Phi^* C^\infty(\Dm_E) = \Cev(\Dm_H),
    \end{equation}
    where $(*)$ follows from \cite[Lemma 16.(c)]{Mishra2022}. Finally, \ref{item:isomorphismH3} is a direct consequence of \ref{item:isomorphismH1}-\ref{item:isomorphismH2}.
\end{proof}

\begin{proof}[Proof of Corollary \ref{cor:adj_onto}]
    Fix $\gamma=0$ (note that any value of $\gamma$ would give the same result). By Theorem \ref{thm:isomorphismH}\ref{item:isomorphismH3}, for $f\in x\Cev(\Dm_H)$, there exists $h\in \Cev(\Dm_H)$ such that
    \begin{equation}
	x^{-1} (I_0^H)^\sharp I_0^H x^2 h = (I_0^H x^2)^* I_0^H x^2 h = x^{-1} f \quad \implies \quad (I_0^H)^\sharp (I_0^H x^2 h) = f,
    \end{equation}
    where by Theorem \ref{thm:smoothmapping}, $I_0^H (x^2 h) \in \muh^2 \Calp(\Ghbar)$. The conclusion follows.

    Another proof makes use of Theorem \ref{thm:interadj} and the fact that the Euclidean backprojection operator $(I_0^{E})^\sharp \colon \Calp (\partial_+ S\Dm_E)\to C^\infty(\Dm_E)$ is onto, see e.g. \break\cite[Theorem 1.4]{Pestov2005}.
\end{proof}

\begin{proof}[Proof of Theorem \ref{thm:moments}] Since $u\in L^2_+ (\Gh, \muh^{-2\gamma}\d\beta\d a)$, we have an $L^2$-convergent expansion
    \begin{equation}
	u = \sum_{n\ge 0}\sum_{k\in \Zm} u_{n,k} \psi_{n,k}^{\gamma,H}, \qquad \sum_{n\ge 0} \sum_{k\in \Zm} |u_{n,k}|^2<\infty.
	\label{eq:udecomp}
    \end{equation}
    For $n\in \Nm_0$, let us denote
    \begin{equation}
	M_n(\omega) \coloneqq \int_{-1}^1 p_n^\gamma \left( \frac{-2s}{1+s^2} \right) u^{\rmv}(\omega,s) \frac{2\d s}{1+s^2}.
    \end{equation}
    The symmetry condition $u\circ \S_A^H = u$ is equivalent to $u^\rmv(\omega,s) = u^{\rmv}(\omega+\pi, -s)$, and this implies the symmetry $M_n(\omega+\pi) = (-1)^n M_n(\omega)$ for all $n \in \Nm_0$, using that $p_n^\gamma(-z)=(-1)^np_n^\gamma(z)$. In particular, the Fourier series of $M_n$ only has terms of same parity as $n$, and we write $M_n(\omega) = \sum_{k\in \Zm} M_{n,k} e^{i(n-2k)\omega}$, where
    \begin{equation}
	\begin{split}
	    M_{n,k} &= \frac{1}{2\pi} \int_{0}^{2\pi} e^{-i(n-2k)\omega} M_n(\omega)\d\omega \\
	&= \frac{1}{2\pi} \int_{0}^{2\pi} \int_{-1}^1 e^{-i(n-2k)\omega} p_n^\gamma \left( \frac{-2s}{1+s^2} \right) u^{\rmv}(\omega,s) \frac{2\d s}{1+s^2}\d \omega \\
	&= \frac{1}{2\pi} \int_{0}^{2\pi} \int_{-1}^1 u^{\rmv}(\omega,s) \overline{(\psi_{n,k}^{\gamma,H})^\rmv} (\omega,s) \left( \frac{1-s^2}{1+s^2} \right)^{-2\gamma-2} \frac{2\d s}{1+s^2}\d\omega \\
	&= \frac{1}{2\pi} \int_{\Gh} u\ \overline{\psi_{n,k}^{\gamma,H}} \muh^{-2\gamma}\d\beta\d a \\
	&= \frac{1}{2\pi} u_{n,k}.
	\end{split}
    \end{equation}
    Note that $M_n$ is \add{the restriction to $\Sm^1$ of} a homogeneous polynomial \add{on $\Rm^2$} of degree $n$ if and only if $M_{n,k} = 0$ for $k\notin \{0, \dots, n\}$, and this is equivalent to $u_{n,k} = 0$ for all $n$ and $k\notin \{0, \dots, n\}$. Therefore, $u$ satisfies the moment conditions if and only if it is orthogonal to the cokernel of $I_0^H x^{2+2\gamma}$.
    Therefore, $u= I_0^H x^{2\gamma+2} f$ for some $f\in H_w^{s,\gamma}(\Dm_H)$ (resp. $f\in \Cev(\Dm_H)$) if and only if it satisfies the moment conditions, and in addition has the required regularity, which is equivalent to \eqref{eq:Hscond} (resp. \eqref{eq:Cinfcond}) by Corollary \ref{prop:rangeSobolevH} and the equality $u_{n,k} = 2\pi M_{n,k}$.
\end{proof}

\section{Data space structure and boundary operators - case \texorpdfstring{$\gamma=0$}{gamma=0}}\label{sec:boundary_ops}

In this section, we study the structure of the data space for the X-ray transform on $(\Dm_H^\circ,g_H)$ and construct boundary operators analogous to those on the Euclidean disk. We conclude it with the proof of Theorem \ref{thm:gammazeroHalt}.

\subsection{Incoming and outgoing boundary for the hyperbolic sphere bundle; the scattering map.}\label{ssec:scattering_map}

Recall that the  space of geodesics on $\Dm_H^\circ$ is identified with $\Gh= \add{(\Rm/2\pi\Zm)}_\beta \times \Rm_a$ via the horocyclic parameterization \eqref{eq:hyp_fan_beam}. We set
\begin{equation}
    \Gamma_\pm \coloneqq \add{\Gh}\times\{\pm 1\}, \qquad \Gamma \coloneqq \Gamma_+\cup \Gamma_-.
\end{equation}
The spaces $\Gamma_{+/-}$ are analogues of the interior of the incoming/outgoing boundaries $(\partial_{+/-} SM)^{\circ}$ defined for a simple Riemannian surface $M$, and the map $(\beta,a)\mapsto (\beta,a,1)$ identifies $\Gh$ with the incoming part $\Gamma_+$.
Note that by Lemma \ref{lm:cosphere-horocycle} and the discussion preceding it, there is already an identification of $\Gh$ with  $\partial_+ S^*\Dm_H^\circ$ which induces a natural diffeomorphism  $\Gamma_\pm \to \partial_\pm S^*\Dm_H^\circ$, $(\beta,a,\pm 1)\mapsto (e^{i\beta}, \pm\frac{\d \tilde{x}}{ \tilde{x}}-a \d \omega)$.

Additionally, by virtue of Lemma \ref{lm:cosphere-horocycle}, the hyperbolic scattering relation, to be understood as the map $\S^H \colon \Gamma_\pm \to \Gamma_{\mp}$ such that $\S^H (g(\dot\gamma(\pm\infty))) = g(\dot\gamma(\mp\infty))$ for any unit-speed geodesic $\gamma(t)$ on $\Dm_H^\circ$ is expressed as
\begin{equation}
    \mathcal{S}^H :\Gamma_\pm \to \Gamma_\mp, \qquad (\beta, a,\pm 1)\mapsto \big(\beta+\pi \pm 2\tan^{-1} a, a, \mp 1\big).
    \label{eq:SH}
\end{equation}
Finally, the antipodal involution $A_H$ on the fibers of $\Gamma$ takes the form
\begin{equation}\label{eq:hyp_antip_invol}
    A_H:\Gamma_\pm\to \Gamma_{\mp}, \qquad (\beta,a,\pm 1)\mapsto (\beta,-a,\mp 1),
\end{equation}
so that $\S_A^H = \S^H \circ A_H = A_H \circ \S^H$.

In terms of the identification described above, the map $\Psihf:\Gh\to (\partial_+S\Dm_E)^\circ$ in \eqref{eq:Psihf} can be viewed as a map $\Gamma_+\to (\partial_+S\Dm_E)^\circ$.
We extend it to $\Gamma$ by setting $\Psihf(\zeta) \coloneqq A_E\circ \Psihf\circ A_H(\zeta)$ for $\zeta\in \Gamma_-$, with $A_E$ the Euclidean antipodal map defined in \eqref{eq:AE}. In this way, we obtain a diffeomorphism $ \Psihf:\Gamma\to \partial  S\Dm_E\setminus \partial_0 S\Dm_E$
\begin{equation}
	\Psihf(\beta,a,\lambda)=\begin{cases}
		(\beta	,\tan^{-1}(a)),\quad& (\beta,a,\lambda)\in \Gamma_+,\\
		(\beta	,\pi-\tan^{-1}(a)),\quad &(\beta,a,\lambda)\in \Gamma_-,
	\end{cases}
	\label{eq:PsihfGamma}
\end{equation}
see also Figure \ref{fig:Psihf}.
It is not hard to check the following intertwining properties:
\begin{equation}\label{eq:intertwining_HE}
    \Psihf\circ\mathcal{S}^H= \mathcal{S}^E\circ \Psihf,\qquad \Psihf\circ\mathcal{S}^H_A= \mathcal{S}^E_A\circ \Psihf.
\end{equation}
Below, $\muh$ is extended to $\Gamma$ so that it becomes odd with respect to $A_H$, i.e., $\muh(\beta,a,\pm 1)=\pm(1+a^2)^{-1/2}$. This also implies that $\muh\circ \S^H=-\muh$.

\begin{figure}[htpb]
    \begin{center}
	\includegraphics[width=0.95\textwidth]{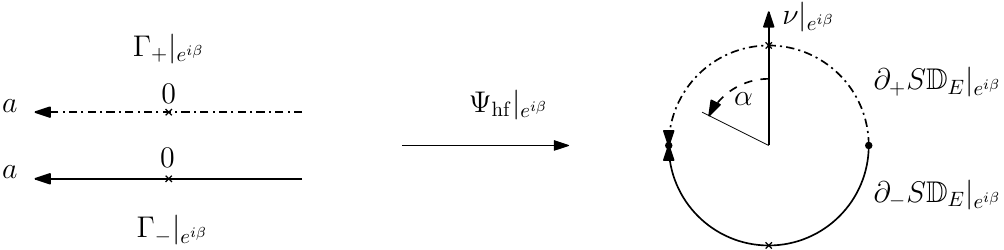}
    \end{center}
    \caption{The fiberwise picture for the map $\Psihf$ defined in \eqref{eq:PsihfGamma}.}
    \label{fig:Psihf}
\end{figure}

\subsection{The boundary operators \texorpdfstring{$A_\pm^H$, $C_\pm^H$, and ${P}_-^H$}{A pm, C m, and P m} } \label{ssec:boundaryOps}

Analogously to \eqref{eq:ApmE}, for a measurable function on $\Gamma_+$, we let
\begin{equation}
    A_\pm^H u(\beta,a,\lambda) \coloneqq \begin{cases}
        u(\beta,a,\lambda),\quad &(\beta,a,\lambda)\in \Gamma_+\\
        \pm u(\mathcal{S}^H (\beta,a,\lambda)),\quad &(\beta,a,\lambda)\in \Gamma_-,
    \end{cases}
\end{equation}
where $\S^H$ is defined in \eqref{eq:SH}. Then, $A_\pm^H u$ is a measurable function on $\Gamma$.
The $L^2(\Gamma_+,\muh^\sigma \d\beta\d a)\to L^2(\Gamma,|\muh|^\sigma \d\beta\d a)$ adjoint for any $\sigma \in \Rm$ is
\begin{equation}
    ({A}_\pm^H)^*u(\beta, a, 1)=u(\beta, a, 1)\pm (\mathcal{S}^H)^* u(\beta, a, 1).
\end{equation}
The fiberwise odd Hilbert transform on $\Gamma$ is defined for $u\in C_c^\infty (\Gamma)$ by
\begin{equation}
    \mathcal{H}_- u(\beta,a,\pm 1) \coloneqq \pm \frac{1}{\pi }\mathrm{p.v.}\int_{-\infty}^\infty \frac{u_-(\beta,a',\pm 1)}{a-a'}\d a', \qquad u_- \coloneqq (u- A_H^*u)/2.
\end{equation}
It extends to a bounded operator on $L^2(\Gamma,\d \beta\d a)$ with the property $A_H^*\mathcal{H}_-=-\mathcal{H}_-$. Using it, we define the following operators on {$\Gamma_+$}:
\begin{equation}\label{eq:CPH}
    {C}_-^H \coloneqq \frac{1}{2} (A_-^H)^*\H_{-}A_-^H,\qquad {P}_-^H \coloneqq  (A_-^H)^*\H_{-}A_+^H.
\end{equation}
In the statement below, we denote by $H_-$ the fiberwise Hilbert transform on the fibers of $\partial S\Dm_E$, restricted to fiberwise odd integrands.

\begin{lemma}\label{lm:Hilbert_intertwiner}
    For $u\in L^2(\Gamma,\d\beta\d a)$, one has
    \begin{equation}\label{eq:Hilbert_intertwiner}
	|\muh|\Psihf^{*}H_-\Psihf^{-*}|\muh|^{-1}u = \mathcal{H}_-u.
    \end{equation}
\end{lemma}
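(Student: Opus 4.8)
The plan is to reduce the identity to a one-dimensional statement on each boundary fiber and then verify it by an explicit change of variables in the Hilbert kernel. First I would observe that both $H_-$ and $\mathcal{H}_-$ act only in the fiber variable, leaving $\beta$ untouched, and that at fixed $\beta$ the map $\Psihf$ carries the two lines $\Rm_a\times\{\pm1\}$ over $e^{i\beta}$ to the fiber circle of $\partial S\Dm_E$ over $e^{i\beta}$ (minus the two glancing directions in $\partial_0 S\Dm_E$), via $\alpha=\tan^{-1}a$ on $\Gamma_+$. Since $|\muh|=|\mu|\circ\Psihf$ equals $\cos\alpha$ on $\Gamma_+$, and since $\Psihf\circ A_H=A_E\circ\Psihf$ intertwines the two antipodal maps, the operator $\Psihf^{-*}|\muh|^{-1}$ carries the $A_H$-odd part $u_-$ to an $A_E$-fiber-odd function, preserving parity classes (as $|\muh|$ is $A_H$-invariant). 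Because $A_E^*H_-=-H_-$ and $A_H^*\mathcal{H}_-=-\mathcal{H}_-$, both sides of \eqref{eq:Hilbert_intertwiner} depend only on $u_-$ and are $A_H$-odd; it therefore suffices to prove the identity for $A_H$-odd $u$ and to check it only on $\Gamma_+$, the case of $\Gamma_-$ following by the shared $A_H$-oddness.

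Next I would make the kernel of $H_-$ explicit. Writing the direction angle as $\theta=\beta+\pi+\alpha$, the fiberwise Hilbert transform on the full circle has kernel $\tfrac{1}{2\pi}\cot(\tfrac{\alpha-\alpha'}{2})$. For an $A_E$-fiber-odd $w$ (so $w(\alpha+\pi)=-w(\alpha)$), folding the integral over the antipodal shift $\alpha'\mapsto\alpha'+\pi$ and using $\cot(x-\tfrac{\pi}{2})=-\tan x$ together with $\cot x+\tan x=2/\sin 2x$ collapses the kernel to
\[
    H_- w(\alpha)=\frac{1}{\pi}\,\mathrm{p.v.}\!\int_{-\pi/2}^{\pi/2}\frac{w(\alpha')}{\sin(\alpha-\alpha')}\,\d\alpha',
\]
the integral now running over $\alpha'\in(-\tfrac{\pi}{2},\tfrac{\pi}{2})$, i.e.\ over the part of the fiber corresponding to $\Gamma_+$.

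Then I would substitute $a=\tan\alpha$, $a'=\tan\alpha'$, using the elementary identities $\sin(\alpha-\alpha')=(a-a')\cos\alpha\cos\alpha'$ and $\d\alpha'=\cos^2\alpha'\,\d a'$. These turn the previous display into $\cos\alpha\,H_-w(\alpha)=\tfrac{1}{\pi}\,\mathrm{p.v.}\int_\Rm\frac{\cos\alpha'\,w(\alpha')}{a-a'}\,\d a'$. Applying $|\muh|\Psihf^*$ on the left produces exactly $\cos\alpha\,H_-w(\alpha)$, while with $w=(\Psihf^{-*}|\muh|^{-1}u)$ restricted to the fiber one has $\cos\alpha'\,w(\alpha')=u(\beta,a',1)$ on $\Gamma_+$; hence the right-hand side is precisely $\mathcal{H}_- u(\beta,a,1)$. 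This establishes \eqref{eq:Hilbert_intertwiner} on $\Gamma_+$, and therefore everywhere.

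The main obstacle is bookkeeping rather than conceptual. One must fix the normalization and convention of the fiberwise Hilbert transform on $\partial S\Dm_E$ so that the constant $1/\pi$ and the kernel $1/\sin(\alpha-\alpha')$ emerge correctly after the antipodal fold, and one must justify that the diffeomorphism $a=\tan\alpha$ preserves the principal-value prescription (it is smooth and strictly monotone, so the diagonal singularity $\alpha=\alpha'$ maps bijectively to $a=a'$ and the p.v.\ is preserved). Carrying the weights $\cos\alpha=|\muh|$ through the substitution and the parity bookkeeping are the only points requiring genuine care.
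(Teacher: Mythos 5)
Your computation is correct and follows essentially the same route as the paper's proof: the paper also works with the explicit kernel of $H_-$ (its formula \eqref{eq:Hminus}, whose full-circle kernel $\frac{1}{2\pi}\langle v,-w^\perp\rangle_E^{-1}=\frac{1}{2\pi}\sin(\alpha-\alpha')^{-1}$ is exactly your folded cotangent kernel), performs the substitution $\alpha'=\tan^{-1}(a')$ fiber by fiber, uses the identity $\sin(\tan^{-1}a-\tan^{-1}a')=\frac{a-a'}{\sqrt{1+a^2}\sqrt{1+a'^2}}$ (your $\sin(\alpha-\alpha')=(a-a')\cos\alpha\cos\alpha'$), and treats $\Gamma_-$ via the intertwining $A_E\circ\Psihf=\Psihf\circ A_H$ together with $A_E^*H_-=-H_-$ and $A_H$-oddness of both sides. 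Your reorganization---reducing first to $A_H$-odd $u$ and folding the kernel onto a half-circle, instead of splitting the full-circle integral into the two halves corresponding to $\Gamma_\pm$ and recombining into $u_-$---is cosmetic; the content is identical.

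The one genuine omission is the passage from regular $u$ to general $u\in L^2(\Gamma,\d\beta\d a)$, which is the class in the statement. Your argument manipulates principal-value integrals pointwise, and your justification that the substitution $a=\tan\alpha$ preserves the p.v.\ prescription (comparing symmetric excision windows in $\alpha$ versus in $a$) genuinely requires some regularity of the integrand near the diagonal; for a bare $L^2$ function the p.v.\ integrals converge only almost everywhere and the window-comparison error cannot be controlled pointwise. The paper avoids this by first assuming $u\in C_c^\infty(\Gamma)$, carrying out the same kernel computation, and then extending to $L^2$ by density, using that $H_-$ and $\mathcal{H}_-$ are bounded on $L^2(\partial S\Dm_E,\d\beta\d\alpha)$ and $L^2(\Gamma,\d\beta\d a)$ respectively and that $\Psihf^{-*}|\muh|^{-1}\colon L^2(\Gamma,\d\beta\d a)\to L^2(\partial S\Dm_E,\d\beta\d\alpha)$ is an isometry. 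You should add this (routine) restriction-plus-density step; as written, your proof establishes the identity only on a dense subspace, not for the stated class of integrands.
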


\begin{proof}
    First assume that $u\in C^\infty_c(\Gamma)$.
    Let $(\beta,a,1)\in \Gamma_+$ so that $\Psihf(\beta,a,1)=(\beta,\tan^{-1} (a))$.
    We use the expression (see the proof of \cite[Lemma 9.4.15]{Paternain2023})
    \begin{equation}
	H_-f(z,w)=\frac{1}{2\pi}\mathrm{p.v.}\int_{S_z\Dm_E}\frac{f(z,v)}{\langle v,-w^\perp \rangle_E}\d S_z(v),\quad f\in C^\infty(S\Dm_E), \quad (z,w)\in S\Dm_E,
	\label{eq:Hminus}
    \end{equation}
    where $\langle\cdot,\cdot\rangle_E$ is the Euclidean metric. We have
    \begin{equation}
	\begin{split}
	((\Psihf^{*}H_-\Psihf^{-*})|\muh|^{-1}u)(\beta,a,1)&\hspace{-.1 in}\overset{\phantom{v=e^{i\alpha'}}}{=}\hspace{-.13 in}
	\frac{1}{2\pi}\mathrm{p.v.}
	\int_{ S_{e^{i\beta}}\Dm_E }
	\frac{\Psihf^{-*}(|\muh|^{-1}u)(e^{i\beta},v)}{\langle v, -2\Re(i e^{i\tan^{-1} (a)}\partial_z)\rangle_E }\d S_{e^{i\beta}}(v)\\
	&\hspace{-.1 in}\overset{v=e^{i\alpha'}}{=}\frac{1}{2\pi}\mathrm{p.v.}
	\int_{-\pi/2}^{\pi/2}
	\frac{\Psihf^{-*}(|\muh|^{-1}u)(\beta,\alpha')}{\Re(i e^{i(\alpha'-\tan^{-1} (a))}) }\d \alpha'\\ &\qquad  +\frac{1}{2\pi}\mathrm{p.v.}
	\int_{\pi/2}^{3\pi/2}
	\frac{\Psihf^{-*}(|\muh|^{-1}u)(\beta,\alpha')}{\Re(i e^{i(\alpha'-\tan^{-1} (a))}) }\d \alpha'.
	\end{split}
    \end{equation}
    We now use the change of variables $\alpha'=\tan^{-1}(a') $ and $\alpha'=\tan^{-1}(a') +\pi $ in the first and second principal value integral, respectively, to find
    \begin{equation}
\begin{split}
	&((\Psihf^{*}H_-{\Psi}_h^{*})|\muh|^{-1}u)(\beta,a,1) \\
	&= \frac{1}{2\pi}\mathrm{p.v.} \int_{-\infty}^{\infty} \frac{\sqrt{1+a'^2}\, u(\beta,a',1)}{\Re(i e^{i(\tan^{-1}(a')-\tan^{-1} (a))}) }\frac{1}{1+a'^2}\d a'\\*
    &\qquad
     +\frac{1}{2\pi}\mathrm{p.v.}
    \int_{-\infty}^{\infty}
    \frac{\sqrt{1+a'^2}\, u(\beta,-a',-1)}{-\Re(i e^{i(\tan^{-1}(a')-\tan^{-1} (a))}) }\frac{1}{1+a'^2}\d a'\\
    &=\frac{1}{2\pi}\mathrm{p.v.}
    \int_{-\infty}^{\infty}
    \frac{u(\beta,a',1)}{\sin (\tan^{-1}({a})-\tan^{-1}{(a')}) }\frac{1}{\sqrt{1+a'^2}}\, \d a'\\*
    &\qquad +\frac{1}{2\pi}\mathrm{p.v.}
    \int_{-\infty}^{\infty}
    \frac{u(\beta,-a',-1)}{-\sin (\tan^{-1}({a})-\tan^{-1}{(a')})  }\frac{1}{\sqrt{1+a'^2}}\, \d a'\\
    &=\frac{1}{\pi}\mathrm{p.v.}
    \int_{-\infty}^{\infty}
    \frac{u_{-}(\beta,a',1)}{\sin (\tan^{-1}({a})-\tan^{-1}{(a')}) }\frac{1}{\sqrt{1+a'^2}}\, \d a'.
\end{split}
   \end{equation}
   We now have $\sin \big(\tan^{-1}({a})-\tan^{-1}{(a')}\big) =\frac{a-a'}{\sqrt{1+a^2}\sqrt{1+a'^2}}$, (using $\sin(\tan^{-1}(a))=\frac{a}{\sqrt{1+a^2}}$ and $\cos(\tan^{-1}(a))=\frac{1}{\sqrt{1+a^2}}$), hence
   \begin{equation}
       (|\muh|\Psihf^{*}H_-\Psihf^{-*}|\muh|^{-1}u)(\beta,a,1)=\frac{1}{\pi}\mathrm{p.v.}
       \int_{-\infty}^{\infty}
       \frac{u_{-}(\beta,a',1)}{a-a' } \d a'=\mathcal{H}_-u(\beta,a,1).
   \end{equation}

   Now, note that $A_H$ and the Euclidean antipodal map $A_E$ satisfy $ A_E\circ\Psihf=\Psihf\circ A_H$. Since $A_E^*H_-=-H_-$ and $ A_H^*|\muh|=|\muh| A_H^*$, we see that
   \begin{equation}
	\begin{split}
       (|\muh|\Psihf^{*}H_-\Psihf^{-*}|\muh|^{-1}u)(\beta,-a,-1)&=(A_H^*|\muh|\Psihf^{*}H_-\Psihf^{-*}|\muh|^{-1}u)(\beta,a,1)\\
       &=-\mathcal{H}_-u(\beta,a,1)=\mathcal{H}_-u(\beta,-a,-1),
	\end{split}
   \end{equation}
   and this proves the statement for $u\in C^\infty_c(\Gamma)$.

   For $u\in L^2(\Gamma,\d\beta\d a)$, the result follows by density. Indeed, it is enough to notice that $H_-$ and $\mathcal{H}_-$ are bounded on $L^2(\partial S \Dm_E,\d \beta\d \alpha)$ and $L^2(\Gamma,\d\beta\d a)$, respectively, and that $\Psihf^{-*} |\muh|^{-1}: L^2(\Gamma,\d\beta\d a)\to L^2(\partial S \Dm_E,\d \beta\d \alpha) $ is an isometry.
\end{proof}

We can now prove the following, which, together with the mapping properties for $C_-$, $P_-$ mentioned in Section\ \ref{sec:gammazero} and Lemma \ref{lm:HTDNH} below, imply the mapping properties in \eqref{eq:CPCinf} and \eqref{eq:CP}.

\begin{proposition}\label{prop:intertwining_bd_ops}
    For $u\in L^2(\Gamma_+,\d\beta\d a)$ and $v\in L^2(\Gamma,\d\beta\d a)$, one has
    \begin{align}
	\label{eq:AplusminusA}
	A_\pm^Hu=\Psihf^{*}A_\pm\Psihf^{-*}u,&\quad
	({A}_\pm^H)^* v= \Psihf^{*}A_\pm^*\Psihf^{-*}v,\\
	{C}_-^H u	=\muh\Psihf^{*}C_-\Psihf^{-*}\muh^{-1}u ,&\quad
	{P}_-^H u= \muh\Psihf^{*}P_-\Psihf^{-*}\muh^{-1}u.\label{eq:CPsminusB}
    \end{align}
\end{proposition}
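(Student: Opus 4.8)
The plan is to establish the two $A_\pm^H$ relations \eqref{eq:AplusminusA} first, then substitute them together with Lemma \ref{lm:Hilbert_intertwiner} into the definitions \eqref{eq:CPH} of $C_-^H$ and $P_-^H$, telescoping the pull-back factors $\Psihf^*,\Psihf^{-*}$ against one another.

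For the identities in \eqref{eq:AplusminusA} I would argue by direct evaluation on each component of $\Gamma$. On $\Gamma_+$ both sides reduce to $u$, since by \eqref{eq:PsihfGamma} the map $\Psihf$ sends $\Gamma_+$ into $\partial_+ S\Dm_E$, where $A_\pm$ acts as the identity. On $\Gamma_-$, where $\Psihf$ lands in $\partial_- S\Dm_E$, the operator $A_\pm$ contributes $\pm(\Psihf^{-*}u)\circ\S^E$; pulling this back by $\Psihf$ and invoking the intertwining $\Psihf\circ\S^H=\S^E\circ\Psihf$ from \eqref{eq:intertwining_HE} turns it into $\pm u\circ\S^H$, which is exactly the defining value of $A_\pm^H u$ on $\Gamma_-$ from \eqref{eq:SH}. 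The adjoint identity for $(A_\pm^H)^*$ is verified in the same spirit: on $\Gamma_+$ both the formula \eqref{eq:A_star} for $A_\pm^*$ (pulled back) and the definition of $(A_\pm^H)^*$ produce $v\pm v\circ(\text{scattering})$, and \eqref{eq:intertwining_HE} matches the Euclidean scattering term with the hyperbolic one. These computations use only that $\Psihf$ is a diffeomorphism respecting the incoming/outgoing splittings and intertwining the scattering maps.

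The remaining identities \eqref{eq:CPsminusB} rest on two structural observations. First, by the extension convention $\muh(\beta,a,\pm1)=\pm(1+a^2)^{-1/2}$ together with \eqref{eq:PsihfGamma}, one has $\muh=\Psihf^*\mu$ on all of $\Gamma$ (hence $|\muh|=\Psihf^*|\mu|$), which means $\Psihf^{-*}|\muh|^{\pm1}\Psihf^* = |\mu|^{\pm1}$; this is what will cancel the pull-back factors in Lemma \ref{lm:Hilbert_intertwiner} and convert the $\mu$-weights on the Euclidean side into the $\muh$-weights on the hyperbolic side. Second, since $\S^E(\beta,\alpha)=(\beta+\pi+2\alpha,\pi-\alpha)$ gives $\mu\circ\S^E=-\mu$, the function $|\mu|$ is $\S^E$-invariant on $\partial S\Dm_E$; because $A_\pm$ and $A_\pm^*$ are built solely from the scattering relation via \eqref{eq:ApmE}--\eqref{eq:A_star}, multiplication by $|\mu|^{\pm1}$ commutes with them, i.e.\ $A_\pm^*|\mu|=|\mu|A_\pm^*$ and $|\mu|^{-1}A_\pm=A_\pm|\mu|^{-1}$, where on each side $|\mu|$ is read on the appropriate factor $\partial S\Dm_E$ or $\partial_+ S\Dm_E$.

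With these in hand I would compute $C_-^H u=\tfrac12(A_-^H)^*\H_- A_-^H u$ by inserting \eqref{eq:AplusminusA}, replacing $\H_-$ by $|\muh|\Psihf^*H_-\Psihf^{-*}|\muh|^{-1}$ from Lemma \ref{lm:Hilbert_intertwiner}, and repeatedly using $\Psihf^{-*}\Psihf^*=\mathrm{id}$ and $\Psihf^{-*}|\muh|^{\pm1}\Psihf^*=|\mu|^{\pm1}$. This telescopes to $\tfrac12\,\Psihf^*A_-^*|\mu|H_-|\mu|^{-1}A_-\Psihf^{-*}u$; commuting $|\mu|$ past $A_-^*$ and $|\mu|^{-1}$ past $A_-$ via the $\S^E$-invariance yields $\tfrac12\,\Psihf^*|\mu|\,A_-^*H_-A_-\,|\mu|^{-1}\Psihf^{-*}u = |\muh|\,\Psihf^*C_-\Psihf^{-*}|\muh|^{-1}u$. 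Since $u$ and the output are functions on $\Gamma_+$, on which $\muh=(1+a^2)^{-1/2}>0$, the factors $|\muh|^{\pm1}$ may be replaced by $\muh^{\pm1}$, giving \eqref{eq:CPsminusB} for $C_-^H$; the identity for $P_-^H=(A_-^H)^*\H_- A_+^H$ follows verbatim, now commuting $|\mu|^{-1}$ past $A_+$. The main obstacle is precisely the bookkeeping of these weight factors: one must track exactly where $|\mu|$ and $|\muh|$ live, exploit the $\S^E$-invariance of $|\mu|$ to move them through $A_\pm,A_\pm^*$, and note at the very end that the $|\muh|$-versus-$\muh$ discrepancy is immaterial because everything is evaluated on $\Gamma_+$.
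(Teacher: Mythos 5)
Your proof is correct and follows essentially the same route as the paper's: establish \eqref{eq:AplusminusA} componentwise from the definitions and the scattering intertwining \eqref{eq:intertwining_HE}, then obtain \eqref{eq:CPsminusB} by conjugating the definitions \eqref{eq:CPH} with Lemma \ref{lm:Hilbert_intertwiner} and commuting the weight factors through the extension/restriction operators using scattering-invariance of the absolute value of the weight. The only cosmetic difference is that you perform the weight commutations on the Euclidean side (via $\muh = \Psihf^*\mu$ and $A_\pm^*|\mu| = |\mu|A_\pm^*$), whereas the paper performs the equivalent commutations on the hyperbolic side (via $A_\pm^H \muh^{-1}=|\muh|^{-1}A_\pm^H$ and $(A_-^H)^*|\muh|=\muh (A_-^H)^*$).
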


\begin{proof}
    Eq \eqref{eq:AplusminusA} is immediate by definition of $A_\pm$, $A_\pm^*$ (\ref{eq:ApmE}-\ref{eq:A_star}) and  \eqref{eq:intertwining_HE}.
    Using it, we compute
    \begin{equation}
	\begin{split}
	    \muh \Psihf^{*}A_-^* H_-A_\pm \Psihf^{-*} \muh^{-1} &
	    =\muh(\Psihf^{*}A^*_-\Psihf^{-*})(\Psihf^{*}H_-\Psihf^{-*})(\Psihf^{*}A_\pm\Psihf^{-*})\muh^{-1} \\
	    &= \muh({A}_-^H)^*(\Psihf^{*}H_-\Psihf^{-*})A_\pm^H\muh^{-1} \\
	    &=({A}_-^H)^*|\muh|(\Psihf^{*}H_-\Psihf^{-*})|\muh|^{-1}A_\pm^H \\
	    &=({A}_-^H)^*\H_-A_\pm^H,	
	\end{split}
	\label{eq:Cminus_eucl_comp}
    \end{equation}
    by Lemma \ref{lm:Hilbert_intertwiner}.
    Here, we used that $A_\pm^H \muh^{-1}=|\muh|^{-1}A_\pm^H$ and $(A_-^H)^*|\muh|=\muh (A_-^H)^*$.
    Now, \eqref{eq:CPsminusB} follows by \eqref{eq:Cminus}, \eqref{eq:Pminus}, and \eqref{eq:CPH}.
\end{proof}

By Proposition \ref{prop:intertwining_bd_ops} and Eqs. \eqref{eq:Cminus_spectral} and \eqref{eq:SVDP}, the following is immediate:.

\begin{corollary}
    The operators
	\begin{align*}
		{C}_-^H \colon L^2_+ (\Gh, \d\beta\d a)\to L^2_+ (\Gh, \d\beta\d a), \quad
		{P}_-^H \colon L^2_- (\Gh, \d\beta\d a)\to L^2_+ (\Gh, \d\beta\d a),
	\end{align*}
	have spectral decompositions of the following form:
    \begin{equation}
	{C}_-^H \psi_{n,k}^{H} = i (1_{k<0} - 1_{k>n}) \psi_{n,k}^{H}, \quad
	{P}_-^H \phi_{n,k}^H = -2i \ 1_{0\le k\le n}\ \psi_{n,k}^{H}, \quad n\ge 0,\ k\in \Zm,
	\label{eq:SVDCPH}
    \end{equation}
    where $\psi_{n,k}^H  \coloneqq \muh \Psihf^{*}\psi_{n,k}$, $\phi_{n,k}^H  \coloneqq  \muh \Psihf^{*}\phi_{n,k}$, with $\psi_{n,k}, \phi_{n,k}$ defined in \eqref{eq:phipsiE}.
\end{corollary}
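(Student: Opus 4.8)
The plan is to obtain both spectral decompositions directly from the conjugation formulas \eqref{eq:CPsminusB} of Proposition \ref{prop:intertwining_bd_ops}, which realize $C_-^H$ and $P_-^H$ as conjugates of the Euclidean operators $C_-$, $P_-$ by the map $\muh\Psihf^*$; once the relevant function spaces are matched, the eigenvalue relations are a one-line cancellation.

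First I would record the functional framework. For $\gamma = 0$ the map $\muh\Psihf^*\colon L^2(\partial_+ S\Dm_E,\d\beta\d\alpha)\to L^2(\Gh,\d\beta\d a)$ is a unitary isomorphism (this is the $\gamma=0$ case of the isometry used in the proof of Theorem \ref{thm:SVDH}, coming from $\alpha = \tan^{-1}a$, $\d\alpha = \muh^2\,\d a$ and $\mu = \muh$). Because $\Psihf\circ\S_A^H = \S_A^E\circ\Psihf$ (see \eqref{eq:intertwining_HE} and the proof of Lemma \ref{lem:charac}) and $(\S_A^H)^*\muh = \muh$, this isometry intertwines $(\S_A^E)^*$ with $(\S_A^H)^*$, hence carries $L^2_\pm(\partial_+ S\Dm_E,\d\beta\d\alpha)$ onto $L^2_\pm(\Gh,\d\beta\d a)$. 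In particular the images $\psi_{n,k}^H = \muh\Psihf^*\psi_{n,k}$ and $\phi_{n,k}^H = \muh\Psihf^*\phi_{n,k}$ of the Euclidean Hilbert bases \eqref{eq:phipsiE} of $L^2_+$ and $L^2_-$ are Hilbert bases of $L^2_+(\Gh,\d\beta\d a)$ and $L^2_-(\Gh,\d\beta\d a)$ respectively, which simultaneously confirms the domains and ranges $C_-^H\colon L^2_+\to L^2_+$ and $P_-^H\colon L^2_-\to L^2_+$ asserted in the statement.

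Then the eigenvalue computation is immediate. Using \eqref{eq:CPsminusB} together with $\Psihf^{-*}\muh^{-1}\cdot\muh\Psihf^* = \mathrm{id}$, I compute
\begin{equation*}
    C_-^H\psi_{n,k}^H = \muh\Psihf^* C_- \Psihf^{-*}\muh^{-1}\,(\muh\Psihf^*\psi_{n,k}) = \muh\Psihf^*\,(C_-\psi_{n,k}),
\end{equation*}
and feeding in the Euclidean spectral relation \eqref{eq:Cminus_spectral} and the definition of $\psi_{n,k}^H$ yields $C_-^H\psi_{n,k}^H = i(1_{k<0}-1_{k>n})\psi_{n,k}^H$. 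The identical manipulation with \eqref{eq:SVDP} in place of \eqref{eq:Cminus_spectral} gives $P_-^H\phi_{n,k}^H = -2i\,1_{0\le k\le n}\,\psi_{n,k}^H$.

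I expect no genuine obstacle here: all the analytic content — the intertwining identities, the $L^2$-boundedness of the boundary operators, and the Euclidean spectral data — is already in place, so this corollary is pure bookkeeping. The only point deserving a moment's attention is checking that the conjugating isometry respects the even/odd ($L^2_\pm$) splitting so that the claimed domains and ranges are correct; this is exactly the intertwining of the antipodal–scattering involutions recorded above, and everything else cancels algebraically.
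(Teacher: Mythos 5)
Your proposal is correct and follows exactly the paper's route: the paper also deduces the corollary immediately from the conjugation identities \eqref{eq:CPsminusB} of Proposition \ref{prop:intertwining_bd_ops} together with the Euclidean spectral relations \eqref{eq:Cminus_spectral} and \eqref{eq:SVDP}. Your additional check that $\muh\Psihf^*$ is a unitary respecting the $L^2_\pm$ splitting is a correct (and welcome) spelling-out of what the paper leaves implicit.
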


\subsection{The compactification \texorpdfstring{$\overline{\Gamma}$ and an ambient characterization of $C_{\alpha,+}^\infty (\Ghbar)$}{Gamma bar}} \label{ssec:smoothstruct}

In this section, we construct a compactification of $\Gamma$ ($\overline{\Gamma}$, defined in \eqref{eq:Gammabar}), and upon equipping $\overline{\Gamma}$ with a smooth structure that makes it diffeomorphic to $\partial S\Dm_E$ (see Lemma \ref{lm:diffeo}), we show in Proposition \ref{prop:ambient} that $C_{\alpha,+}^\infty (\Ghbar)$ admits an ``ambient'' characterization akin to the statement
\begin{equation}
    C_{\alpha,+}^\infty(\partial_+ S\Dm_E) = \{u\in C^\infty(\partial_+ S\Dm_E)\colon A_+ u \in C^\infty(\partial S\Dm_E),\  \add{u\circ \S_A^E} = u\},
\end{equation}
which holds in the Euclidean case.

Consider the compactification $\overline{\Rm}_a=[-\infty,\infty]_a$ of the real line with the smooth structure described in Section \ref{sec:extendedI0} so that the map $\tan^{-1}:\Rm_a\to (-\pi/2,\pi/2)$ extends to a smooth diffeomorphism $\overline{\Rm}_a\to [-\pi/2,\pi/2]$, still denoted  by $\tan^{-1}$. Let $\overline{\Gamma}_{\pm} \coloneqq  \add{(\Rm/2\pi\Zm)}_\beta\times \overline{\Rm}_a\times  \{\pm 1\}$, and set
\begin{equation}
    \overline{\Gamma} \coloneqq \overline{\Gamma}_{+}\sqcup \overline{\Gamma}_-/_\sim
    \label{eq:Gammabar}
\end{equation}
where $\sim$ is the smallest equivalence relation on $\overline{\Gamma}_{-}\sqcup \overline{\Gamma}_{+}$ such that $(\beta,\pm \infty, 1)\sim (\beta,\pm \infty, -1)$. We equip $\overline{\Gamma}$ with the quotient topology, and we now construct a smooth structure on $\overline{\Gamma}$ making it diffeomorphic to $\partial S\Dm_E$. Consider the following subsets of $\overline{\Gamma}$:
\begin{equation}
	\begin{split}
	    U_1 & \coloneqq \big(\add{(\Rm/2\pi\Zm)}_\beta\times (-\infty,\infty]\times \{1\}\big)/_\sim\cup\big( \add{(\Rm/2\pi\Zm)}_\beta\times (-\infty,\infty]\times \{-1\}\big)/_\sim,\\
               U_2 & \coloneqq \big(\add{(\Rm/2\pi\Zm)}_\beta\times [-\infty,\infty)\times \{1\}\big)/_\sim\cup\big( \add{(\Rm/2\pi\Zm)}_\beta\times [-\infty,\infty)\times \{-1\}\big)/_\sim.
	\end{split}
\end{equation}
We obtain-well defined homeomorphisms $\varphi_j:U_j\to \varphi_j(U_j)\subset \add{(\Rm/2\pi\Zm)}_\beta\times \Rm$ by setting
\begin{equation}
\begin{aligned}
     \varphi_1(
    [\beta, a,\lambda])&=\begin{cases}
        (\beta,\tan^{-1}({a})), & \lambda=1\\
        (\beta,\pi -\tan^{-1}({a})), & \lambda=-1
    \end{cases},\\
    \varphi_2(
    [\beta, a,\lambda])&=\begin{cases}
        (\beta,\tan^{-1}({a})), & \lambda=1\\
        (\beta,-\pi
        -\tan^{-1}({a})), & \lambda=-1
    \end{cases}.
    \end{aligned}
\end{equation}
For $(\beta,\alpha)\in \varphi_2(U_1\cap U_2)=\add{(\Rm/2\pi\Zm)}_\beta\times \big((-\pi/2,\pi/2)\cup (-3\pi/2,-\pi/2)\big)$, they satisfy
\begin{equation}
    \varphi_1\circ \varphi^{-1}_2(\beta,\alpha)=
    \begin{cases}
	(\beta,\alpha),& (\beta,\alpha)\in \add{(\Rm/2\pi\Zm)}_\beta\times (-\pi/2,\pi/2),\\
	(\beta,\alpha+2\pi),& (\beta,\alpha)\in \add{(\Rm/2\pi\Zm)}_\beta\times (-3\pi/2,-\pi/2),
    \end{cases}
\end{equation}
which is a diffeomorphism onto $$\varphi_1(U_1\cap U_2)=\add{(\Rm/2\pi\Zm)}_\beta\times \big((-\pi/2,\pi/2)\cup (\pi/2,3\pi/2)\big).$$
Since $\overline{\Gamma}\subset U_1\cup U_2$, we obtain a smooth structure on $\overline{\Gamma}$ upon composing the first coordinate of $\varphi_j$ with charts for $\add{(\Rm/2\pi\Zm)_\beta}$ and extending the resulting charts for $\overline{\Gamma}$ to a maximal atlas.

\begin{lemma}\label{lm:diffeo}
    With the smooth structure for $\overline{\Gamma}$ defined above, $\overline{\Gamma}_\pm/_\sim$ become embedded smooth submanifolds with boundary,
    and the map $\Psihf^{-1}: \partial S\Dm_E\setminus \partial_0 S\Dm_E\to \Gamma$ extends to a smooth diffeomorphism
    \begin{equation}
        \bPsifh:\partial S\Dm_E\to \overline{\Gamma}, \quad (z,w)\mapsto \begin{cases}
            [\Psihf^{-1}(z,w)], \quad& (z,w)\in( \partial_\pm S\Dm_E)^{\circ}\\
            [(z,\pm \infty,1)],\quad& (z,w)=(z,\pm \nu^\perp) \in \partial_0 S\Dm_E,
        \end{cases}
    \end{equation}
    with inverse denoted by $\bPsihf$, where $\nu$ is the inward pointing unit normal vector field.
\end{lemma}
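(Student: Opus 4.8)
The plan is to construct the inverse map $\bPsihf\colon \overline{\Gamma}\to \partial S\Dm_E$ directly in the atlas $\{\varphi_1,\varphi_2\}$ and show it is a diffeomorphism; part (a) and the description of the extension $\bPsifh$ then follow by transport of structure. The guiding observation is that the smooth structure on $\overline{\Gamma}$ was designed precisely so that, read in these charts, $\bPsihf$ is nothing more than an angular fiber coordinate on $\partial S\Dm_E$.

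First I would fix the covering parameterization $q\colon \Sm^1_\beta\times(\Rm/2\pi\Zm)_\alpha\to \partial S\Dm_E$, $q(\beta,\alpha)=(e^{i\beta},e^{i(\beta+\pi+\alpha)})$, a local diffeomorphism realizing the product smooth structure of $\partial S\Dm_E\cong \Sm^1\times\Sm^1$, under which $(\partial_\pm S\Dm_E)^\circ$ and $\partial_0 S\Dm_E$ correspond to $\alpha\in(-\pi/2,\pi/2)$, $\alpha\in(\pi/2,3\pi/2)$, and $\alpha\in\{\pm\pi/2\}$ respectively. Write $q_1,q_2$ for the restrictions of $q$ to $\alpha\in(-\pi/2,3\pi/2)$ and $\alpha\in(-3\pi/2,\pi/2)$; each is a diffeomorphism onto $\partial S\Dm_E$ with one fiber-circle removed ($\{\alpha=-\pi/2\}$, resp.\ $\{\alpha=\pi/2\}$). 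Comparing \eqref{eq:PsihfGamma} and the tangential prescription of the statement with the chart formulas, one checks the pointwise identities $\bPsihf|_{U_j}=q_j\circ\varphi_j$, $j=1,2$; in particular the glued point $[\beta,+\infty,\pm1]$ is sent by $\varphi_1$ to $(\beta,\pi/2)$ (both branches agreeing), hence by $q_1$ to $(e^{i\beta},\nu^\perp)$, matching the value prescribed by the statement, namely $\bPsihf[\beta,+\infty,\pm1]=(e^{i\beta},\nu^\perp)$.

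Since by construction each $\varphi_j$ is a diffeomorphism onto the open set $\varphi_j(U_j)$ and each $q_j$ is a diffeomorphism onto an open subset of $\partial S\Dm_E$, the two restrictions $\bPsihf|_{U_j}$ are diffeomorphisms onto open sets. They agree on $U_1\cap U_2$: the transition map $\varphi_1\circ\varphi_2^{-1}$ computed in the construction (the identity on one component and $\alpha\mapsto\alpha+2\pi$ on the other) is exactly the relation $q_1=q_2$ modulo the $2\pi$-periodicity of $q$. Hence $\bPsihf$ is well defined and smooth on $\overline{\Gamma}=U_1\cup U_2$, and as $q_1,q_2$ jointly cover $\partial S\Dm_E$ it is surjective; injectivity follows from the explicit piecewise formula together with the gluing $(\beta,\pm\infty,1)\sim(\beta,\pm\infty,-1)$, which the map reads off as the identification of the two glued circles $a=\pm\infty$ with the two tangential circles of $\partial_0 S\Dm_E$. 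A smooth bijection that is a local diffeomorphism is a diffeomorphism, so $\bPsihf$ is a diffeomorphism with inverse the map $\bPsifh$ of the statement, which extends $\Psihf^{-1}$ as claimed. For part (a), $\bPsihf$ carries $\overline{\Gamma}_+/_\sim=\{[\beta,a,1]:a\in[-\infty,\infty]\}$ onto $\{\alpha\in[-\pi/2,\pi/2]\}=\partial_+S\Dm_E$, a compact codimension-zero embedded submanifold-with-boundary of $\partial S\Dm_E$ with boundary $\partial_0 S\Dm_E$, and likewise $\overline{\Gamma}_-/_\sim$ onto $\partial_- S\Dm_E$; pulling back through the diffeomorphism yields the asserted embedded submanifold-with-boundary structures.

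The main obstacle, and the only place needing genuine care, is the behavior at the tangential boundary $\partial_0 S\Dm_E$, where the sheets $\Gamma_+$ and $\Gamma_-$ are glued: away from it $\bPsihf$ is just $\Psihf^{-1}$, a diffeomorphism onto $\Gamma$. Here one must confirm (i) that the compactified map $\tan^{-1}\colon \overline{\Rm}_a\to[-\pi/2,\pi/2]$ is a diffeomorphism up to the boundary, so that $\varphi_1$ is genuinely smooth across $\alpha=\pi/2$ and $\varphi_2$ across $\alpha=-\pi/2$, and (ii) that the limits $a\to+\infty$ along $\Gamma_+$ and along $\Gamma_-$ produce the same tangential vector $\nu^\perp$ (and $-\nu^\perp$ for $a\to-\infty$), so that $\bPsifh$ is continuous and $\varphi_1$ single-valued at the glued point. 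Both facts are precisely the content of the chart construction preceding the lemma, so the diffeomorphism property transfers without further estimates.
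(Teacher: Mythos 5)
Your proof is correct, and while its computational core coincides with the paper's, the organization is genuinely different in two respects. For the diffeomorphism statement, both arguments reduce to the same verification---that read in the charts $\varphi_j$ and fan-beam coordinates the map is the identity (equivalently, the identities $\tan^{-1}(\tan\alpha)=\alpha\mp\pi$ on $(\pi/2,\pi)$ and $(-\pi,-\pi/2)$, and the matching of both branches at $a=\pm\infty$ with $\pm\nu^\perp$); the paper computes $\varphi_1\circ\bPsifh=\mathrm{id}$ directly, whereas you package the same fact as $\bPsihf|_{U_j}=q_j\circ\varphi_j$ with $q$ the $2\pi$-periodic fan-beam parameterization, which has the advantage of making the consistency on $U_1\cap U_2$ transparent: the chart transition $\alpha\mapsto\alpha+2\pi$ is absorbed by the periodicity of $q$, and the local-diffeomorphism property comes for free from that of $\varphi_j$ and $q_j$. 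The more substantive difference is the treatment of part (a) and the order of the two claims: the paper proves (a) \emph{first}, by exhibiting explicit boundary charts for $\overline{\Gamma}_\pm/_\sim$ (using that $\arcsin(|\muh|)$ is a smooth bdf near the corner circles), and only then verifies the diffeomorphism; you prove the diffeomorphism first and then obtain (a) by pulling back the embedded submanifold-with-boundary structure of $\partial_\pm S\Dm_E\subset\partial S\Dm_E$. Your route is slicker and avoids hand-building boundary charts; the paper's route has the merit of describing the boundary structure of $\overline{\Gamma}_\pm/_\sim$ intrinsically, without reference to the Euclidean model. One small point to tighten: your injectivity sentence should note explicitly that the images of $U_1\setminus U_2$, $U_2\setminus U_1$ and $U_1\cap U_2$ (namely $\{\alpha=\pi/2\}$, $\{\alpha=-\pi/2\}$ and $\partial S\Dm_E\setminus\partial_0 S\Dm_E$) are pairwise disjoint, so that injectivity on each $U_j$ upgrades to global injectivity; this is immediate from your formulas but is the one step currently left implicit.
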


\begin{proof}
    For the  first statement, it suffices to construct charts of $\overline{\Gamma}$ that restrict to boundary charts for $\overline{\Gamma}_\pm$.
    For  $j=1,2$ and $|a|>0$, we check that
    \begin{equation}
	\begin{split}
	\varphi_j\big|_{\overline{\Gamma}_+}(\beta,a,1)&=(\beta,\tan^{-1}(a))=\big(\beta,\mathrm{sign}(a)(\pi/2-\arcsin(|\muh|)\big),\\
	\varphi_j\big|_{\overline{\Gamma}_-}(\beta,a,-1)&=(\beta,(-1)^{j+1}\pi-\tan^{-1}(a))\\
	&=\Big(\beta,(-1)^{j+1}\pi-\mathrm{sign}(a)(\pi/2-\arcsin(|\muh|)\big)\Big).
	\end{split}
    \end{equation}
    Since  by definition of the  smooth structure on $\overline{\Gamma}_\pm$, the function  $\arcsin(|\muh|)$ is   a smooth bdf for $\overline{\Gamma}_\pm$ for small $|\muh|$, we conclude that $\varphi_j\big|_{\overline{\Gamma}_\pm}$ yield smooth boundary charts upon suitably reflecting and translating the second variable.

    For the second statement, bijectivity is clear since $\Psihf^{-1}\big|_{(\partial_\pm S\Dm_E)^{\circ}}:(\partial_\pm S\Dm_E)^{\circ}\to \Gamma_\pm$ is a smooth diffeomorphism, and the equivalence relation $\sim$ is trivial on $\Gamma_\pm \subset \overline{\Gamma}_\pm$.
    We check the smoothness of $\varphi_1\circ \bPsifh$ near $\partial_0 S\Dm_E$.
    In fan-beam coordinates with $\alpha\in (0,\pi)$ so that $\bPsifh(\beta,\alpha)\in U_1$,
    \begin{equation}
	\begin{split}
	\varphi_1\circ \bPsifh(\beta,\alpha)
	&=\begin{cases}
	    \varphi_1( [\beta,\tan(\alpha),1)]), & \alpha \in (0,\pi/2) \\
	    \varphi_1(  [\beta,-\tan(\alpha),-1]), & \alpha \in (\pi/2,\pi ) \\
	    \varphi_1([(\beta,+ \infty,1)]),& \alpha=\pi/2
	\end{cases}\\
	&=\begin{cases}
	    (\beta,\alpha), & \alpha \in (0,\pi/2) \\
	    (\beta,\pi +\tan^{-1}(\tan(\alpha))), & \alpha \in (\pi/2,\pi ) \\
	    (\beta,\pi/2),& \alpha=\pi/2
	\end{cases}
	.
	\end{split}
    \end{equation}
    Since  $\tan^{-1}(\tan(\alpha))=\alpha -\pi$ if $\alpha\in (\pi/2,\pi)$, we find that  $\varphi_1\circ \bPsifh(\beta,\alpha)=(\beta,\alpha)$ for $\alpha\in (0,\pi)$, and thus $\bPsifh|_{(0,\pi)}$ is a smooth diffeomorphism onto its image.
    If $\alpha\in (-\pi,0)$ so that $\bPsifh(\beta,\alpha)\in U_2$, a similar computation shows that, again, $\varphi_1\circ \bPsifh(\beta,\alpha)=(\beta,\alpha)$ (now one needs to use that $\tan^{-1}(\tan(\alpha))=\alpha +\pi$ if $\alpha\in(-\pi,-\pi/2)$).
    We conclude that $\bPsifh$ is a smooth bijection with smooth inverse, hence a diffeomorphism.
\end{proof}

The scattering map extends in a natural way to a map
\begin{equation}
    \overline{\mathcal{S}}^H:\overline{\Gamma}\to \overline{\Gamma} ,\quad \overline{\mathcal{S}}^H([(\beta,{a},\lambda) ])=\begin{cases}
        [{\mathcal{S}}^H(\beta,a,\lambda)],\quad & a\in \Rm,\\
        [(\beta,\pm\infty,\lambda)], \quad
        &
        {a}=\pm \infty,
    \end{cases}
\end{equation}
and the intertwining property \eqref{eq:intertwining_HE} extends to all of $\partial S\Dm_E$:
\begin{equation}\label{eq:ext_scat}
    \bPsihf\circ \overline{\mathcal{S}}^H=  \mathcal{S}^E\circ\bPsihf.
\end{equation}
Since $\S^E:\partial S\Dm_E\to \partial S\Dm_E$ is a smooth diffeomorphism fixing $\partial_0 S\Dm$ and $\overline{\mathcal{S}}^H=\bPsifh\circ  \mathcal{S}^E\circ \bPsihf$, Lemma \ref{lm:diffeo} immediately implies the following.

\begin{proposition}
	The extended scattering map $\overline{\mathcal{S}}^H: \overline{\Gamma}\to \overline{\Gamma}$ is a smooth diffeomorphism whose (pointwise) fixed point set is given by $\overline{\Gamma}_0 \coloneqq \overline{\Gamma}_+/_\sim\cap \overline{\Gamma}_-/_\sim$.
\end{proposition}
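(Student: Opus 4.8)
The plan is to reduce everything to the Euclidean scattering relation $\mathcal{S}^E$ via the diffeomorphism $\bPsifh$ of Lemma \ref{lm:diffeo}, using the conjugation identity $\overline{\mathcal{S}}^H = \bPsifh \circ \mathcal{S}^E \circ \bPsihf$ recorded just above (equivalently \eqref{eq:ext_scat}). Every structural property of $\overline{\mathcal{S}}^H$ then transfers from the corresponding — and already understood — property of $\mathcal{S}^E$.

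First, for the smooth-diffeomorphism claim I would simply observe that each factor on the right-hand side of this identity is a smooth diffeomorphism: $\bPsifh$ and its inverse $\bPsihf$ by Lemma \ref{lm:diffeo}, and $\mathcal{S}^E\colon \partial S\Dm_E \to \partial S\Dm_E$ because the scattering relation of the convex, nontrapping Euclidean disk is a smooth involution of the full boundary sphere bundle. A composition of smooth diffeomorphisms is a smooth diffeomorphism, so $\overline{\mathcal{S}}^H$ is one as well, with no further computation needed.

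Second, for the fixed-point set I would use that conjugation by the bijection $\bPsihf$ carries fixed points to fixed points: $p\in \overline{\Gamma}$ satisfies $\overline{\mathcal{S}}^H(p)=p$ if and only if $\mathcal{S}^E(\bPsihf(p)) = \bPsihf(p)$, whence $\mathrm{Fix}(\overline{\mathcal{S}}^H) = \bPsifh(\mathrm{Fix}(\mathcal{S}^E))$. There remain two tasks: identifying $\mathrm{Fix}(\mathcal{S}^E)$, and computing its image under $\bPsifh$. For the former, the explicit formula $\mathcal{S}^E(\beta,\alpha) = (\beta+\pi+2\alpha,\ \pi-\alpha)$ from \eqref{eq:EscatRel} forces $\pi-\alpha \equiv \alpha$ and $\pi+2\alpha \equiv 0 \pmod{2\pi}$, and both conditions hold precisely when $\alpha \equiv \pi/2 \pmod{\pi}$; these are exactly the glancing directions, so $\mathrm{Fix}(\mathcal{S}^E) = \partial_0 S\Dm_E$. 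For the latter, I would feed the glancing covectors $(z,\pm\nu^\perp) \in \partial_0 S\Dm_E$ into the explicit formula for $\bPsifh$ in Lemma \ref{lm:diffeo}, which sends them to the classes $[(\beta,\pm\infty,1)]$ with $z = e^{i\beta}$. As $\beta$ ranges over $\Sm^1$ and the sign over $\pm$, these exhaust exactly the classes identified by $\sim$, which by the definition of $\overline{\Gamma}$ in \eqref{eq:Gammabar} are precisely the points of $\overline{\Gamma}_+/_\sim \cap \overline{\Gamma}_-/_\sim = \overline{\Gamma}_0$. Combining the two tasks gives $\mathrm{Fix}(\overline{\mathcal{S}}^H) = \bPsifh(\partial_0 S\Dm_E) = \overline{\Gamma}_0$.

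The bulk of the argument is bookkeeping, so I do not expect a genuine obstacle. The only points requiring care are the \emph{exactness} in the first task — verifying that $\mathcal{S}^E$ has no fixed points away from $\partial_0 S\Dm_E$, not merely that it fixes the glancing set pointwise — and the careful matching in the second task of the two descriptions of $\overline{\Gamma}_0$, one as the image of the glancing set under $\bPsifh$ and one as the gluing locus $\overline{\Gamma}_+/_\sim \cap \overline{\Gamma}_-/_\sim$ of the two closed half-pieces.
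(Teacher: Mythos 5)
Your proof is correct and takes essentially the same route as the paper: the paper's own (one-line) argument likewise uses the conjugation $\overline{\mathcal{S}}^H=\bPsifh\circ \mathcal{S}^E\circ \bPsihf$ together with Lemma \ref{lm:diffeo} and the fact that $\mathcal{S}^E$ is a smooth diffeomorphism whose fixed point set is exactly $\partial_0 S\Dm_E$. Your two extra verifications (exactness of $\mathrm{Fix}(\mathcal{S}^E)$ via \eqref{eq:EscatRel}, and the identification $\bPsifh(\partial_0 S\Dm_E)=\overline{\Gamma}_0$) merely spell out what the paper treats as immediate.
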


Now, the following alternative characterization of the space $C_{\alpha,+}^\infty (\Ghbar)$ is immediate by \eqref{eq:CalphaG}.
\begin{proposition}\label{prop:ambient}
	One has
\begin{equation}
	C_{\alpha,+}^\infty (\Ghbar) = \{u\in C^\infty(\overline{\Gamma}_+) \colon A_{+}^H u \in C^\infty(\overline{\Gamma}),\ (\overline{\S}_A^H)^* u =  u\}.
\end{equation}	
where $\Ghbar$ is identified with $\overline{\Gamma}_+$ as before, $\overline{\S}_A^H=\overline{\S}^H\circ \overline{A}_H$, and $\overline{A}_H:\overline{\Gamma}\to \overline{\Gamma}$ is the extension of $A_H$ to $\overline{\Gamma}$ by continuity.
\end{proposition}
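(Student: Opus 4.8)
The plan is to transport, through the diffeomorphism of Lemma \ref{lm:diffeo}, the Euclidean ambient characterization
\begin{equation*}
    C_{\alpha,+}^\infty(\partial_+ S\Dm_E) = \{v\in C^\infty(\partial_+ S\Dm_E)\colon A_+ v \in C^\infty(\partial S\Dm_E),\ (\S_A^E)^* v = v\}
\end{equation*}
recalled at the start of this subsection, using the defining relation \eqref{eq:CalphaG}. Writing $v := \Psihf^{-*} u$ for $u$ a function on $\overline{\Gamma}_+\cong \Ghbar$, membership $u\in \Calp(\Ghbar)$ is by definition equivalent to $v\in \Calp(\partial_+ S\Dm_E)$, so it suffices to verify that each of the three Euclidean conditions on $v$ matches the corresponding condition on $u$.

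For the smoothness condition, Lemma \ref{lm:diffeo} shows that $\bPsihf$ restricts to a diffeomorphism $\overline{\Gamma}_+/_\sim \to \partial_+ S\Dm_E$ agreeing with $\Psihf^{-1}$ on the interior; hence $v$ extends smoothly to $\partial_+ S\Dm_E$ if and only if $u$ extends smoothly to $\overline{\Gamma}_+$. For the symmetry condition, the extended intertwining $\bPsihf \circ \overline{\S}_A^H = \S_A^E \circ \bPsihf$ (which follows from \eqref{eq:ext_scat} together with the continuous extension $\overline{A}_H$ of $A_H$, noting that $\overline{\S}_A^H$ preserves $\overline{\Gamma}_+$ because $\S^H\circ A_H$ maps $\Gamma_+$ to $\Gamma_+$) turns $(\S_A^E)^* v = v$ into $(\overline{\S}_A^H)^* u = u$. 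For the ambient condition $A_+ v \in C^\infty(\partial S\Dm_E)$, I would first establish the global operator identity $A_+^H u = \bPsihf^*(A_+ v)$ on all of $\overline{\Gamma}$: on $\overline{\Gamma}_+$ both sides reduce to $u$ since $v\circ\bPsihf = \Psihf^* v = u$ there, while on $\overline{\Gamma}_-$ the definition of $A_+$ on $\partial_- S\Dm_E$ combined with \eqref{eq:ext_scat} gives $A_+ v\circ \bPsihf = v\circ \bPsihf\circ \overline{\S}^H = u\circ \overline{\S}^H$, matching $A_+^H u$ there. Since $\bPsihf\colon \overline{\Gamma}\to \partial S\Dm_E$ is a global diffeomorphism, this identity yields $A_+^H u \in C^\infty(\overline{\Gamma})$ if and only if $A_+ v \in C^\infty(\partial S\Dm_E)$. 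Combining the three equivalences with \eqref{eq:CalphaG} gives the claimed characterization.

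The only delicate point is this last equivalence: the interior intertwining $A_\pm^H = \Psihf^* A_\pm \Psihf^{-*}$ of Proposition \ref{prop:intertwining_bd_ops} (resting on \eqref{eq:intertwining_HE}) must be promoted to an identity valid across the gluing locus $\overline{\Gamma}_0 = \bPsifh(\partial_0 S\Dm_E)$, where the two sheets $\overline{\Gamma}_\pm/_\sim$ meet. This is precisely what the smooth-diffeomorphism statement of Lemma \ref{lm:diffeo} and the extended scattering intertwining \eqref{eq:ext_scat} are designed to provide, so that $A_+^H u = \bPsihf^*(A_+ v)$ holds on $\overline{\Gamma}$ by continuity from the interior. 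Once the compactification and its smooth structure have been set up, the proposition is therefore immediate.
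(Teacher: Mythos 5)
Your proposal is correct and matches the paper's intended argument: the paper states the proposition as ``immediate by \eqref{eq:CalphaG}'' precisely because, once Lemma \ref{lm:diffeo}, the extended intertwining \eqref{eq:ext_scat}, and the antipodal intertwining $A_E\circ\Psihf=\Psihf\circ A_H$ are in place, one transports the Euclidean ambient characterization through the global diffeomorphism $\bPsihf$ exactly as you do. Your verification of the three conditions (smoothness via Lemma \ref{lm:diffeo}, symmetry via $\bPsihf\circ\overline{\S}_A^H=\S_A^E\circ\bPsihf$, and the identity $A_+^H u=\bPsihf^*(A_+ v)$ across the gluing locus) simply fills in the details the paper leaves implicit.
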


\subsection{Proof of theorem \ref{thm:gammazeroHalt} }\label{ssec:gammazeroH}

Before proving Theorem \ref{thm:gammazeroHalt}, we note the following lemma.

\begin{lemma}\label{lm:HTDNH}
    For all fixed $s\ge 0$, one has
    \begin{equation}
	H_{T,D ,\pm}^s (\Ghbar)=\muh \Psihf^{*}H_{T,D ,\pm}^s(\partial_+S\Dm_E),\quad
	H_{T,N ,\pm}^s (\Ghbar)=\muh \Psihf^{*}H_{T,N ,\pm}^s(\partial_+S\Dm_E).
    \end{equation}
\end{lemma}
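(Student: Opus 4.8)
The plan is to produce a single unitary map intertwining the Euclidean and hyperbolic realizations of $-T^2$ together with their core domains, and then transport the functional calculus across it. Set $U:=\muh\Psihf^{*}$, i.e.\ $Ug=\muh\cdot(g\circ\Psihf)$. As already recorded in the proof of Theorem~\ref{thm:SVDH} (case $\gamma=0$), the change of variables $\alpha=\tan^{-1}a$ together with the identity $\Psihf^{*}\mu=\muh$ shows that $U$ is a unitary isomorphism from $L^2(\partial_+ S\Dm_E,\d\beta\d\alpha)$ onto $L^2(\Ghbar,\d\beta\d a)$. Since $\muh\circ\S_A^H=\muh$ and $\Psihf\circ\S_A^H=\S_A^E\circ\Psihf$, the map $U$ commutes with the symmetry projections $(id\pm(\cdot)_A^*)/2$, hence restricts to unitary isomorphisms $L^2_\pm(\partial_+ S\Dm_E,\d\beta\d\alpha)\to L^2_\pm(\Ghbar,\d\beta\d a)$. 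Finally, the defining relation for $\T_0^H$ (the $\gamma=0$ case of the identity stated after \eqref{eq:TgH}) reads $\T_0^H=U\,\T_0\,U^{-1}$.

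Next I would verify that $U$ carries each Euclidean core domain onto the corresponding hyperbolic one. Using $\Psihf^{*}\mu=\muh$ and the definition $\Cal(\Ghbar)=\Psihf^{*}\Cal(\partial_+ S\Dm_E)$, one computes
\begin{equation}
	U\big(\mu\,\Cal(\partial_+ S\Dm_E)\big)=\muh^{2}\,\Psihf^{*}\Cal(\partial_+ S\Dm_E)=\muh^{2}\,\Cal(\Ghbar),\qquad U\big(\Cal(\partial_+ S\Dm_E)\big)=\muh\,\Cal(\Ghbar).
\end{equation}
Thus $U$ maps the Dirichlet core $\mu\Cal(\partial_+ S\Dm_E)$ of $\T_{0,D}$ onto the Dirichlet core $\muh^{2}\Cal(\Ghbar)$ of $\T_{0,D}^H$, and the Neumann core $\Cal(\partial_+ S\Dm_E)$ of $\T_{0,N}$ onto the Neumann core $\muh\Cal(\Ghbar)$ of $\T_{0,N}^H$, intertwining the operators through $\T_0^H=U\T_0U^{-1}$ in both cases.

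For $\bullet\in\{D,N\}$ the unitary $U$ therefore conjugates the symmetric operator $\T_0$ on its Euclidean core to $\T_0^H$ on the matching hyperbolic core. Since both are essentially self-adjoint on these cores (Euclidean side from \cite[Theorem~7]{Mishra2022} and the discussion in Sec.~\ref{sec:gammazero}; hyperbolic side as recorded before \eqref{eq:SobTH}), conjugation by $U$ identifies their unique self-adjoint closures, giving $\T_{0,\bullet}^H=U\,\T_{0,\bullet}\,U^{-1}$. By \eqref{eq:HTcharac} the spectrum of $\T_{0,\bullet}$ is $\{(n+1)^2\}_{n\ge0}$, hence both closures are bounded below by $1$, the powers $\T_{0,\bullet}^{-s/2}$ and $(\T_{0,\bullet}^H)^{-s/2}$ are well defined, and functional calculus is preserved under unitary conjugation, so $(\T_{0,\bullet}^H)^{-s/2}=U\,\T_{0,\bullet}^{-s/2}\,U^{-1}$. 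Inserting this into the definitions \eqref{eq:SobTH} and \eqref{eq:SobT}, and using $U^{-1}L^2_\pm(\Ghbar,\d\beta\d a)=L^2_\pm(\partial_+ S\Dm_E,\d\beta\d\alpha)$, yields
\begin{equation}
	H_{T,\bullet,\pm}^s(\Ghbar)=(\T_{0,\bullet}^H)^{-s/2}L^2_\pm(\Ghbar,\d\beta\d a)=U\,\T_{0,\bullet}^{-s/2}L^2_\pm(\partial_+ S\Dm_E,\d\beta\d\alpha)=\muh\Psihf^{*}H_{T,\bullet,\pm}^s(\partial_+ S\Dm_E),
\end{equation}
which is the assertion. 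The only delicate point is the passage from the algebraic intertwining on cores to the intertwining of the closures and of their functional calculus; this is precisely where the essential self-adjointness on the prescribed cores is used, guaranteeing that each operator has a unique self-adjoint extension and that these extensions are genuinely unitarily equivalent via $U$.
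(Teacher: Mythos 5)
Your proof is correct and is essentially the argument the paper leaves implicit: Lemma \ref{lm:HTDNH} is stated there without proof, precisely because every hyperbolic object involved --- $\T_0^H$, its Dirichlet and Neumann cores $\muh^2\Cal(\Ghbar)$ and $\muh\Cal(\Ghbar)$, and $L^2_\pm(\Gh,\d\beta\d a)$ --- is by construction the image under the unitary $U=\muh\Psihf^*$ of the corresponding Euclidean object, so the scales \eqref{eq:SobTH} and \eqref{eq:SobT} correspond under $U$. Your writeup supplies the details (unitarity of $U$, its equivariance with respect to $\S_A^E$ and $\S_A^H$, transport of graph closures and of the functional calculus through conjugation) correctly, the only minor imprecision being the appeal to \eqref{eq:HTcharac} for the lower bound of the full operators $\T_{0,\bullet}$, since that display only covers the $(D,+)$ and $(N,-)$ combinations; the needed invertibility follows instead from the full spectral decomposition in \cite[Theorem 7]{Mishra2022}.
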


Now, we have the following proof.
\begin{proof}[Proof of Theorem \ref{thm:gammazeroHalt}]
    Fix $s\ge 0$. For $u$ as in the statement, we have  $\Psihf^{-*}\muh^{-1}u\in \mu C_{\alpha,+}^\infty(\partial_+S\Dm_E)$ (resp. $H_{T,D,+}^{s+1/2}(\partial_+S\Dm_E)$, by Lemma \ref{lm:HTDNH}). By Theorem \ref{thm:rangegammazero}, Theorem \ref{thm:rangeP}, and \eqref{eq:PU_Smooth}, the following are equivalent:
    \begin{enumerate}[(i)]
	\item\label{eq_cond_0alt} There exists $\tilde{f}\in C^\infty(\Dm_E)$ (resp.  $\tilde{f}\in \widetilde{H}^{s,0}(\Dm_E))$ such that $\Psihf^{-*}\muh^{-1}u=I^E_0\tilde{f}$;
	\item \label{eq_cond_1alt} $C_-\Psihf^{-*}\muh^{-1} u=0$;
	\item\label{eq_cond_2alt} There exists $\tilde{w}\in C_{\alpha,-}^\infty(\partial _+S\Dm_E)$ (resp. $H_{T,N,-}^{s+1/2}(\partial_+ S\Dm_E)$) such that $P_-\tilde{w}=\Psihf^{-*}\muh^{-1}u$.
    \end{enumerate}
    Now, \ref{eq_cond_0alt} is equivalent to the existence of $f=\Phi^{*}\tilde{f}\in \Cev (\Dm_H)$ (resp. $\wtH^{s,0}_w (\Dm_H)$) such that
    \begin{equation}
	\Psihf^{-*}\muh^{-1}u=I_0^E\tilde{f}=I_0^E\Phi^{{-*}}{f}\iff u=\muh\Psihf^{*}I_0^E \Phi^{-*}f\overset{\text{Thm. \ref{thm:ProjEq}}}{=}I_0^Hx^2 f,
    \end{equation}
    that is, \ref{eq_cond_0alt} is equivalent to \ref{gammazeroHit0alt}.
    On the other hand, \ref{eq_cond_2alt} holds exactly when ${C}_-^Hu=0$ by Proposition \ref{prop:intertwining_bd_ops}, so it is equivalent to \ref{gammazeroHit1alt}.
    Finally, the equation in \ref{eq_cond_2alt} can be rewritten as
    \begin{equation}
	u=\muh \Psihf^{*} P_-\tilde{w}\overset{\text{Prop.\ \ref{prop:intertwining_bd_ops}}}{\iff} u ={P}_-^H\mu_h \Psihf^{*}\tilde{w}\iff u=P_-^Hw,
    \end{equation}
    where $w \coloneqq \muh\Psihf^{*}\tilde{w}\in\muh C_{\alpha,-}^\infty(\Ghbar)$ (resp. $w\in H_{T,N,-}^{s+1/2} (\Ghbar)$), so \ref{eq_cond_2alt} is equivalent to \ref{gammazeroHit2alt}.  We have thus shown {\ref{gammazeroHit0alt}$\Leftrightarrow$\ref{gammazeroHit1alt}$\Leftrightarrow$\ref{gammazeroHit2alt}}. Finally, \ref{gammazeroHit0alt}$\Leftrightarrow$\ref{gammazeroHit3alt} is a special case of Theorem \ref{thm:moments} for $\gamma=0$, using that  $(\sigma_{n,k}^0)^2=\frac{4\pi}{n+1}$.
\end{proof}


\appendix

\section{Hyperbolic moment conditions} \label{sec:HMC}

In this section, we discuss the hyperbolic moment conditions as defined in \cite{Berenstein1993a}, and show in particular that they are equivalent to \eqref{eq:remoments} in the present setting, when choosing $p_m(x)= (-x)^m$.
There, the authors state the hyperbolic moment conditions for rapidly decaying functions on the space $\G_{n,k}$ of totally geodesic $k$-dimensional planes in the $n$-dimensional Poincar\'e ball. They use the parameterization $\G_{n,k}=\{(\xi,v,r)\in  \Xi_{n,k}\times\Sm^{n-1}\times [0,\infty):v\in \xi\}$, where
\begin{equation}
    \Xi_{n,k}=\big\{\Sm^{n-1}\cap \Pi:\;\; \Pi \text{ a } n-k\text{-dimensional plane through the origin in }\Rm^n \big\}.\label{eq:Xink}
\end{equation}
Each triple in $\G_{n,k}$ corresponds to the $k$-geodesic plane orthogonal to the plane determined by $\xi$, through the point $rv$, where $r$ denotes geodesic distance from the origin (see \cite[p.4]{Berenstein1993a}).
Then, the hyperbolic moment conditions for a rapidly decaying function $\phi$ on $\G_{n,k}$ are stated as follows: For every $\xi\in \Xi_{n,k} $ and $v\in \xi$,
\begin{equation}\label{eq:MoCoBeTa}
    \int_{v\in \xi}\langle v ,v'\rangle_E^{m}
    \left[\int_0^{\infty}\frac{\tanh^{m+n-k-1}(r)}{\cosh(r)}\phi(\xi,v,r)\d r\right]\d S( v)=P_m(v').
\end{equation}
Here, $\d S(v)$ is the measure on $\xi\cong \Sm^{n-k-1}$ induced by the Lebesgue measure on $\Pi$ in \eqref{eq:Xink}, and $P_m$ is a homogeneous polynomial of degree $m$.

In our case, $n=2$, $k=1$, and each $\xi=\Sm^1\cap L$, where $L$ is a line through the origin, consists of two points.
So, let $\xi=\{\pm v_0\}\in \Xi_{2,1}$, where $v_0\in \Sm^1$, and rewrite \eqref{eq:MoCoBeTa} as
\begin{equation}\label{eq:MoCoBeTa2}
    \sum_{v=\pm v_0}\langle v ,v'\rangle_E^{m}
    \int_0^{\infty}\frac{\tanh^{m}(r)}{\cosh(r)}\phi(\xi,v,r)\d r=P_m(v'),\qquad v'=\pm v_0.
\end{equation}
If \eqref{eq:MoCoBeTa2} holds for $v'\in \xi$, it also holds for $-v'$ (by homogeneity of $P_m$). So, let $v'=v_0$. Then, \eqref{eq:MoCoBeTa2} is equivalent the statement that for all $v_0\in \Sm^1$,
\begin{equation}\label{eq:MoCoBeTa3}
\begin{split}
    P_m(v_0)=&\sum_{v=\pm v_0}\langle v ,v_0\rangle_E^{m}	\int_0^{\infty}\frac{\tanh^{m}(r)}{\cosh(r)}\phi(\xi,v,r)\d r\\
    =&\int_0^{\infty}\frac{\tanh^{m}(r)}{\cosh(r)}\phi(\xi,v_0,r)\d r+(-1)^m\int_0^{\infty}\frac{\tanh^{m}(r)}{\cosh(r)}\phi(\xi,-v_0,r)\d r.
\end{split}
\end{equation}
If we view $\phi$ as a function on the double cover $\G_{2,1}'=\{(\xi,v,r)\in  \Xi_{2,1}\times\Sm^{1}\times (-\infty,\infty):v\in \xi\}$ of the space of (unoriented) geodesics, we see that  $\phi(\xi,v_0,r)=\phi(\xi,-v_0,-r)$.
Therefore,
\begin{equation}\label{eq:MoCoBeTa4} 	
\begin{split}	
    P_m(v_0)\overset{\phantom{\rho=-r}}{=}&\int_0^{\infty}\frac{\tanh^{m}(r)}{\cosh(r)}\phi(\xi,v_0,r)\d r+(-1)^m\int_0^{\infty}\frac{\tanh^{m}(r)}{\cosh(r)}\phi(\xi,v_0,-r)\d r\\
    \overset{\rho=-r}{=}&\int_0^{\infty}\frac{\tanh^{m}(r)}{\cosh(r)}\phi(\xi,v_0,r)\d r+(-1)^m\int_{-\infty}^{0}\frac{(-1)^m\tanh^{m}(\rho)}{\cosh(\rho)}\phi(\xi,v_0,\rho)\d\rho\\
    \overset{\phantom{\rho=-r}}{=}&\int_{-\infty}^{\infty}\frac{\tanh^{m}(r)}{\cosh(r)}\phi(\xi,v_0,r)\d r.
\end{split}
\end{equation}
With $\x(t)$ defined in \eqref{eq:hyp_fan_beam}, since $\x(t)$ is a unit-speed geodesic, the geodesic distance from $0$ to $\x(t)$ is $t$. By rotation-invariance, this means that the Euclidean distance $s$ from the origin appearing in our vertex parameterization is related to the geodesic distance $r$ by $s = \tanh(r/2)$, and this implies $\tanh(r)=\frac{2s}{1+s^2}$ and $\cosh(r)=\frac{1+s^2}{1-s^2}$. Therefore, changing variable $r\to s$, we arrive at
\begin{equation}
\begin{split}
	P_m(v_0)=&\int_{-\infty}^{\infty}\cosh(r)\tanh^{m}(r)\phi(\xi,v_0,r)\mathrm{sech}^2(r)\d r\\
	=&\int_{-1}^{1}\frac{1+s^2}{1-s^2}\left(\frac{2s}{1+s^2}\right)^{m}\phi\big(\xi,v_0, \tanh^{-1} \Big(\frac{2s}{1+s^2}\Big)\big)\left(2\frac{1-s^2}{(1+s^2)^2}\right)\d s\\
	=&\int_{-1}^{1} \left(\frac{2s}{1+s^2}\right)^{m}\phi^{\mathrm{v}}(\arg(v_0),s)\ \frac{2\d s}{1+s^2},
\end{split}
\end{equation}
and this recovers \eqref{eq:remoments} (using the correspondence of $\gamma^{\mathrm{v}}_{\omega,s}$, viewed as an unoriented geodesic, with $ \big(\{\pm e^{i\omega}\},e^{i\omega},\tanh^{-1}(\frac{2s}{1+s^2})\big)\in \G_{2,1}'$).

\section*{Acknowledgments}
The authors thank the anonymous referees, whose comments and suggestions improved the presentation of the article.









\bibliographystyle{plain}

\end{document}